\newtheorem{thm}{Theorem}
\newtheorem{cor}[thm]{Corollary}
\newtheorem{lem}[thm]{Lemma}
\newtheorem{prop}[thm]{Proposition}
\newtheorem{defn}[thm]{Definition}
\newtheorem{rem}[thm]{Remark}
\newtheorem{hypo}[thm]{Hypothesis}
\newcommand{\N}{\mathbb{N}}
\newcommand{\Z}{\mathbb{Z}}
\newcommand{\R}{\mathbb{R}}
\newcommand{\C}{\mathbb{C}}
\renewcommand{\S}{\mathcal{S}}
\renewcommand{\H}{\mathcal{H}}
\renewcommand{\O}{\mathcal{O}}
\newcommand{\mB}{\mathcal{B}}
\newcommand{\vect}[2]{\begin{pmatrix}#1\\#2\end{pmatrix}}
\newcommand{\nrm}[2]{\left\|#1\right\|_{#2}}
\newcommand{\norm}[1]{\left\|#1\right\|}
\newcommand{\absnorm}[1]{\norm{ #1 }_1}
\newcommand{\mW}{\mathscr{W}}
\newcommand{\mJ}{\mathcal{J}}
\newcommand{\bT}{\mathbf{T}}
\newcommand{\BS}{\mathfrak{BS}}
\DeclareMathOperator{\Tr}{Tr}
\DeclareMathOperator{\dom}{dom}
\DeclareMathOperator{\ran}{ran}
\DeclarePairedDelimiterX{\bnorm}[1]{\big\lVert}{\big\rVert}{#1}
\DeclarePairedDelimiterX{\Bnorm}[1]{\Big\lVert}{\Big\rVert}{#1}
\newcommand{\Nat}{\mathbb{N}}
\renewcommand{\S}{\mathcal{S}}
\newcommand{\A}{\mathcal{A}}
\newcommand{\WT}{\mathscr{W}_{\rm Taylor}(\R)}
\newcommand{\WCT}{\mathscr{W}_{\rm Certainly Taylor}(\R)}
\newcommand{\Q}{\mathfrak{Q}}
\newcommand{\Scal}{\mathcal{S}}
\newcommand{\Hcal}{\mathcal{H}}
\renewcommand{\H}{\mathcal{H}}
\newcommand{\Snp}{\mathcal{S}^{n/p}}
\title{Differentiation, Taylor series, and all order spectral shift functions, for relatively bounded perturbations}
\author{Arup Chattopadhyay\footnote{arupchatt@iitg.ac.in, Department of Mathematics, IIT Guwahati, 781039, India.},~~ Teun D.H. van Nuland\footnote{t.d.h.vannuland@tudelft.nl, TU Delft, EWI/DIAM, 2600 GA Delft, The Netherlands.},~~ Chandan Pradhan\footnote{chandan.pradhan2108@gmail.com, Department of Mathematics, IISc Bangalore, 560012, India.}}
\begin{document}
	\maketitle
	
	\begin{abstract}
		Given $H$ self-adjoint, $V$ symmetric and relatively $H$-bounded, and $f:\R\to\C$ satisfying mild conditions, we show that the Gateaux derivative
		$$\frac{d^n}{dt^n}f(H+tV)|_{t=0}$$
		exists in the operator norm topology, for every natural $n$, and establish perturbation formulas for multiple operator integrals under relatively bounded perturbations. If the $H$-bound of $V$ is less than 1, we obtain sufficient conditions on $f$ which ensure that the Taylor expansion
		$$f(H+V)=\sum_{n=0}^\infty\frac{1}{n!}\frac{d^n}{dt^n} f(H+tV)\big|_{t=0}$$
		exists and converges absolutely in operator norm.
		Assuming that $V(H-i)^{-p}\in\S^{s/p}$ for $p=1,\ldots,s$ for some $s\in\N$, we show that the Krein--Koplienko spectral shift functions $\eta_{k,H,V}$, satisfying 
		$$\Tr\left(f(H+V)-\sum_{m=0}^{k-1}\frac{1}{m!}\frac{d^m}{dt^m} f(H+tV)\big|_{t=0}\right)=\int_\R f^{(k)}(x)\eta_{k,H,V}(x)dx,$$
		exist for every $k=1,2,3,\ldots$, independently of $s$. The latter result (which is significantly stronger than \cite{vNS22}) is new also in the case that $V$ is bounded. The proof is based on \cite{PSS}, combined with a generalisation of the multiple operator integral compatible with \cite{HMvN}.
		We discuss applications of our results to quantum physics and noncommutative geometry.
	\end{abstract}
	
	\section{Introduction}\label{sec:intro}
	\footnote{2010 {\it Mathematics Subject Classification}: 47A55, 46A56; 47B93, 47A60, 58B34.}
	\footnote{{\it Key words and phrases}: Operator derivatives, relatively bounded operator, Taylor series, multiple operator integration, spectral shift functions.}
	The Taylor series is a fundamental tool in real analysis, and its noncommutative or operator-theoretic generalization is useful in quantum mechanics, quantum field theory, and noncommutative geometry.
	It relies firstly on the existence of derivatives of operator functions, for which the most natural setting is arguably the one of Kato and Rellich.
	Indeed, if $H$ is a self-adjoint (possibly unbounded) operator in a separable Hilbert space $\H$, and $V$ is symmetric and relatively $H$-bounded, then the Kato--Rellich theorem states that
	$$H+tV\qquad\text{is self-adjoint for all}\qquad t\in(-\tfrac{1}{a},\tfrac{1}{a}),$$
	where $a$ is the $H$-bound of $V$. Hence, by Borel functional calculus, we may apply a bounded measurable function $f:\R\to\C$ to obtain a mapping
	\begin{align}\label{eq:intro:operator function}
		(-\tfrac1a,\tfrac1a)\to\mB(\H),\qquad t\mapsto f(H+tV).
	\end{align}
	For suitable classes of $f$, the following questions may arise:
	\begin{enumerate}
		\item[Q1.]\label{Q1} Is $t\mapsto f(H+tV)$ $n$-times differentiable?
		\item[Q2.]\label{Q2} Does the Taylor-expansion of $f(H+tV)$ in orders of $t$ converge in norm?
		\item[Q3.]\label{Q3} Do all higher-order spectral shift functions exist?
	\end{enumerate}
	{For reasons of summability, more assumptions on $H$ and $V$ are needed to affirmatively answer Q3, as detailed in the next subsection. 
		It turns out that Q1 and Q2 have an affirmative answer without extra assumptions on $H$ and $V$, and with conditions on $f$ that are relatively easy to verify. 
		
		


		Pioneering results concerning the differentiability of the operator function \eqref{eq:intro:operator function} were obtained in~\cite{DaletskiiKrein1956} by use of the  {\it double operator integral}. The $n$-th order derivative of the operator function~\eqref{eq:intro:operator function}, for bounded perturbations $V$, is naturally expressed in terms of a {\it multiple operator integral} (a concept originating in \cite{BS1,DaletskiiKrein1956,Pavlov1971,SolomyakStenkin1971,Stenkin1977}), which generalizes the notion of a double operator integral to more than two variables (see, e.g.,~\cite{ACDS,CoLemSu21,HMvN,Peller,PSS,PoSkSu14,PoSkSu16,Sk17Adv}). A series of works has since addressed the differentiability of such operator functions under various conditions on $f$, $H$, and $V$ -- see~\cite[Paragraph~5.3]{ST19}, and references therein.

		When differentiating in the direction of an unbounded operator $V$, the operator-norm Frechet derivative of \eqref{eq:intro:operator function} is useless because it contains $\|V\|=\infty$ in the denominator. One is thus led to the Gateaux derivative, for which the following holds.
		\begin{thm}\label{thm intro:higher derivatives in norm}
			Let $H$ be self-adjoint in a separable Hilbert space $\H$, and let $V$ be symmetric and relatively $H$-bounded with $H$-bound $a\in[0,\infty)$. If $n\in\N$ and $f\in C^{n+1}(\R)$ is such that the functions $\frac{d^{p}}{dx^{p}}(f(x)(x-i)^{p})$ are Fourier transforms of finite complex measures for all $p=0,\ldots,n+1$, then the mapping
			$$(-\tfrac{1}{a},\tfrac{1}{a})\to\mB(\H),\qquad t\mapsto f(H+tV)$$
			is $n$ times differentiable in operator norm.
		\end{thm}
		The above result follows from Theorem \ref{thm:higher derivatives in norm}, which moreover provides alternative conditions to ensure operator differentiability, and establishes that
		%
		%
		\begin{align}\label{eq:MOI derivative}
			\frac{1}{n!}\frac{d^n}{dt^n}f(H+tV)\big|_{t=0}=\bT^{H,\ldots,H}_{f^{[n]}}(V,\ldots,V),
		\end{align}
		in which $\mathbf{T}^{H,\ldots,H}_{f^{[n]}}$ is an $n$-multilinear map from relatively bounded operators to bounded operators which extends the multiple operator integral of \cite{ACDS,Peller} and is compatible with \cite{HMvN}. The main reason multiple operator integrals are powerful tools in perturbation theory, noncommutative analysis, and noncommutative geometry, is because of their surprisingly clean analytical and algebraic properties. We extend several such properties to the relatively bounded setting. A further motivation for these developments are the questions raised in~\cite{widom}.
		
		For unbounded $V$, trying to obtain \eqref{eq:MOI derivative} directly by naively extending the multiple operator integral (directly inserting $V$ in the integral) fails because of domain problems. The intuition behind a formula like \eqref{eq:MOI derivative} in the relatively bounded setting is that the summability of a multiple operator integral $T_\phi^{H,\ldots,H}(V,\ldots,V)$ is typically related to the asymptotic behaviour of its symbol $\phi$, and for suitable $f$ the asymptotic behaviour of the divided difference $f^{[n]}(\lambda_0,\ldots,\lambda_n)$ can be controlled by $\lambda_0^{-1}\cdots\lambda_n^{-1}$. In the unbounded case however, distinct subtleties arise in the translation from symbols to operators, which we address with combinatorial techniques.
		
		The results mentioned above yield explicit expressions and bounds for the higher-order operator-norm Gateaux derivative $\frac{d^n}{dt^n} f(H+tV)\big|_{t=0}$ and the Taylor remainder. Particular upshots are algebraic rules for multi-variable Gateaux derivatives (see Remark \ref{rem:multi-Gateaux}), and the existence of a constant $c_f$ such that $$\norm{f(H+V)-f(H)}\leq c_f\|V(H-i)^{-1}\|.$$ 
		It would be interesting if the same techniques, or some modified version of them, could be used to obtain optimal constants.
		
		Using Theorem \ref{thm intro:higher derivatives in norm}, we establish the existence of an absolutely norm-converging Taylor series. The respective radius of convergence is related both to $\|V(H-i)^{-1}\|$ and an explicit norm of $f$, which uses the smoothness as well as the decay at infinity of $f$.
		\begin{thm}\label{thm intro:Taylor series}
			Let $H$ be self-adjoint in a separable Hilbert space $\H$, and let $V$ be symmetric and relatively $H$-bounded with $H$-bound $a\in[0,1)$. Let $f\in C^\infty(\R)$ be such that $\frac{d^n}{dx^n}(f(x)(x-i)^{n})$ are Fourier transforms of finite complex measures $\mu_n$ for all $n\in\N$ and let there exist constants $c_f,C_f>0$ such that $\|\mu_n\|\leq c_fC_f^{n}n!$ (see Lemma \ref{lem:examples Taylor series} for examples). If $\|V(H-i)^{-1}\|<\frac{1}{1+C_f}$, then we have
			$$f(H+V)=\sum_{n=0}^\infty\frac{1}{n!}\frac{d^n}{dt^n} f(H+tV)\big|_{t=0}\,,$$
			where the series converges absolutely in operator norm. If $\|\mu_n\|\leq c_f C_f^n (n!)^\gamma$ for $\gamma<1$ then the above identity holds, with absolutely norm-converging series, without any assumption on $\|V(H-i)^{-1}\|$.
		\end{thm}
		
		The `noncommutative Taylor series' may be understood as the combination of Theorem \ref{thm intro:Taylor series} and \eqref{eq:MOI derivative}, possibly supplemented with one of the various explicit (integral) expressions for \eqref{eq:MOI derivative}. As such, the noncommutative Taylor series has been studied in numerous contexts \cite{Feynman,HMvN,Lesch,Daletskii,Hansen,GM}. In quantum physics it often comes in guises such as the Dyson series, Volterra series and Born series. In noncommutative geometry, the noncommutative Taylor series comes up in the context of the spectral action \cite{S14,Sui11,vNvS21a} and heat kernel expansions \cite{Lesch,IM1,HMvN}, and the algebraic structure underlying its summands can be informative \cite{oneloop}. 
		We hope that, by unifying results scattered throughout the literature, Theorem \ref{thm intro:Taylor series} and \eqref{eq:MOI derivative} bring connections between these applications to the foreground.
		
		
		

		\paragraph{Spectral shift functions of all orders}

		The spectral shift function is a useful notion for the spectral analysis of quantum systems, and the question of its existence has sparked enormous progress in mathematical perturbation theory. 
		First defined by Krein \cite{Krein}, and generalized to higher order by Koplienko \cite{Koplienko}, the spectral shift function of order $k$ is the function $\eta_k=\eta_{H_0,V,k}$
		such that for all sufficiently regular $f:\R\to\C$ we have
		$$\Tr\left(f(H+V)-\sum_{m=0}^{k-1}\frac{d^m}{dt^m}f(H+tV)\big|_{t=0}\right)=\int_\R\eta_k(x)f^{(k)}(x)dx.$$
		In \cite{PSS} the spectral shift functions $\eta_k$ of orders $k\geq n$ were shown to exist whenever $V\in\mathcal S^n$. Because physical situations -- for example when $H$ is a differential operator and $V$ a multiplication operator -- require $V$ to have continuous spectrum, the paper \cite{vNS22} used the much weaker assumption $V(H-i)^{-1}\in\mathcal S^{n}$ ($n\in\N$) to obtain the spectral shift functions of order $k\geq n$. Under a similar assumption, \cite{vNS23} added existence of $\eta_{n-1}$ when $n$ is even. Throughout the literature one finds many such assumptions which generalise the case of $(H-i)^{-1}\in\S^n$ to a `locally compact' setting. Often, existence of the spectral shift function depends on $n$, which for a differential operator $H$ depends on its order and the dimension of the underlying space. The notable exception is that $\eta_1$ was shown to exist independently of $n$ in \cite{Yafaev}. However, for general $n\in\N$, existence of $\eta_2,\ldots,\eta_{n-1}$ remained open, even for bounded $V$ and under any of the above reasonable conditions. 
		
		The final result of this paper is the existence, uniqueness and regularity properties of $\eta_k$ for all orders $k\in\N$ under the assumptions that $V$ is symmetric and 
		\begin{align}\label{assumption}
			V(H-i)^{-p}\in\mathcal S^{n/p}\qquad(p=1,\ldots,n),
		\end{align}
		for an arbitrary $n\in\N$. 
		The assumption \eqref{assumption} unifies the `locally compact' assumption $V(H-i)^{-n}\in\mathcal S^1$ with the `relative Schatten' assumption $V(H-i)^{-1}\in\mathcal S^n$ and is used in \cite{Rennie,S21,SZ}. We exemplify its applicability in Section \ref{sct:Examples}.

		In practice, the summability $n$ should be correlated with the dimension of the underlying space and the order of the initial differential operator. The fact that in \cite{vNS23} this summability is correlated with the order of the spectral shift function, is thus revealed as an artefact from not starting with the `right' assumption. 
		
		Similar but distinct from the above crucial point, the boundedness assumption on $V$ appearing in previous works is revealed to be artificial by Theorem \ref{thm:nth order}. This shows the power of our extension of the multiple operator integral and the superscript difference identity, Theorem \ref{thm:superscript difference}.
		

		Compared to earlier results on higher-order spectral shift \cite{PSS,vNS22,vNS23}, the proof moreover gains an inductive structure: existence of the spectral shift function $\eta_k$ can be deduced from the existence of $\eta_{k-1}$. Our proof is thus split into induction basis -- existence of $\eta_1$ -- in Section \ref{sct:Krein}, and induction step in Section \ref{sct:Higher order}. Under different assumptions, existence of $\eta_1$ was already known, and the reader only interested in the cases already covered by \cite{Yafaev} needs only to read Section \ref{sct:Higher order}.
		
		While finishing this manuscript, the authors became aware of the preprint version of \cite{AP24}, which independently obtains first-order differentiability for relatively bounded perturbations and a sharpening of \cite{Yafaev} for relatively trace-class perturbations, for a different class of scalar functions. During the review process of the current paper, an independent preprint \cite{Fuerst} appeared connecting higher-order spectral shift functions with index theory for a concrete class of operators; its use of multiple operator integrals is closely related to our approach, though the goals differ.
		
		
		
		\paragraph{This article is structured as follows}
		Section \ref{sec:prel} contains preliminaries,  Section \ref{sec:moi} proves the multiple operator integration techniques needed in Sections \ref{sec:operator differ} and \ref{sec:ssf},  Section \ref{sec:operator differ} proves higher-order differentiability and existence of Taylor series, and Section \ref{sec:ssf} studies the existence of spectral shift functions of all orders. 

		\paragraph{Acknowledgement} The authors thank the anonymous referee for a careful reading of the manuscript and for providing numerous insightful suggestions, which have greatly improved the clarity and exposition of the article. The authors also thank Anna Skripka, Fedor Sukochev, and Eva-Maria Hekkelman for essential remarks and stimulating discussions. A. Chattopadhyay is supported by the Core Research Grant (CRG), File No: CRG/2023/004826, by SERB, DST, Govt. of India. C. Pradhan acknowledges support from NBHM post-doc fellowship, Govt. of India. T.D.H. van Nuland was supported in part by ARC (Australian Research Council) grant FL17010005, and in part by the NWO project ‘Noncommutative multi-linear harmonic analysis and higher order spectral shift’, OCENW.M.22.070.

		\section{Preliminaries}\label{sec:prel}
		\paragraph{Notations and conventions}
		
		We write $\N=\{1,2,...\}$. We let $\H$ be a separable Hilbert space, and $\mB(\H)$ the bounded linear operators on $\H$. For $p\in[1,\infty)$, we let $\S^p=\S^p(\H)$ denote the Banach space of Schatten $p$-class operators, with the Schatten norm $\|\cdot\|_p$. We use the convention that $\Scal^\infty=\Scal^\infty(\Hcal)$ denotes the set of all compact operators on $\Hcal$ with the usual operator norm $\|A\|_\infty=\|A\|$. By \enquote{$H$ is self-adjoint in $\H$} we mean $H$ is a self-adjoint possibly unbounded operator on a (dense) domain $\dom(H)\subseteq\H$.
		We use the convention that $(-\frac{1}{a},\frac{1}{a})=(-\infty,\infty)$ if $a=0$.
		We define $u:\R\to\C$ by
		$$u(x):=x-i.$$
		%
		%
		Let $X$ be an interval in $\mathbb{R}$ possibly coinciding with $\mathbb{R}$. Let $C(X)$ denote the space of all continuous functions on $X$, $C_0(\R)$ the space of continuous functions on $\R$ decaying to $0$ at infinity, $C^n(X)$ the space of $n$-times continuously differentiable functions on $X$. Let $C_b^n(X)$ denote the subset of $C^n(X)$ of such $f$ for which $f^{(n)}$ is bounded. Let $C_c^n(\mathbb{R})$ denote the subspace of $C^n(\mathbb{R})$ consisting of compactly supported functions.
		We also use the notation $C^0(\mathbb{R})=C(\mathbb{R})$. Let $C_c^n(X)$ denote the subspace of $C_c^n(\R)$ consisting of the functions whose closed support is contained in $X$.
		For $p\in[1,\infty]$, let $L^p(\R)$ denote the Lebesgue $L^p$-space with usual norm $\|f\|_p=\|f\|_{L^p(\R)}$.
		Let $L^1_{\text{loc}}(\R)$ denote the space of functions locally integrable on $\R$ equipped with the seminorms $f\mapsto \int_{-a}^a|f(x)|\,dx$, $a>0$. For any $f\in L^1(\R)$, $\hat{f}$ denotes the Fourier transform with convention $f(x)=\int_\R \hat{f}(y)e^{ixy}dx$. We canonically extend this Fourier transform to tempered distributions, though any distribution that we shall encounter in this paper will be canonically represented by a function or a measure. For a finite measure $\mu$ we write $\|\mu\|$ for its total variation, which coincides with the corresponding $L^1$-norm if $\mu$ is absolutely continuous.

		\subsection{New function spaces}

		Let $n\in\Z_{\geq0}$.
		We introduce the following function spaces:
		\begin{align*}
			W_0(\R):=&\{f\in C_b(\R):~f(x)=\int_\R e^{ixy}d\mu(y)\textnormal{ for a finite complex measure $\mu$ on $\R$}\},\\
			\mW_n(\R):=&\{f\in C^n(\R):~(fu^p)^{(p)}\in W_0(\R),~p=0,\ldots,n\}.
		\end{align*}
		
		For each $k\leq n$ we also introduce the auxiliary space:
		\begin{align*}
			\mW^{n}_{k}(\R)&=\{f\in C^{n}(\R):~(fu^{p})^{(n-k+p)}\in W_0(\R),p=0,\ldots,k\}\\
			&=\{f\in C^{n}(\R):~(fu^{k-n+p'})^{(p')}\in W_0(\R),p'=n-k,\ldots,n\}.
		\end{align*}
		
		The following lemma shows that test functions such as rational functions, Schwartz class functions, and $C_c^{n+1}(\R)$ are contained in the newly introduced function classes. 
		\begin{lem}
			Let $n\in\N$. Then, the following assertions hold.
			\begin{enumerate}[label=\textnormal{(\alph*)}]
				\item
				For every $\alpha>\frac12$,
				\begin{align*}
					\left\{f\in C^{n+1}(\R):~ f^{(p)}(x)=\O\left(|x|^{-p-\alpha}\right)
					\text{ as } |x|\to\infty,\;p=0,\ldots,n+1\right\}\subseteq \mW_n(\R).
				\end{align*}
				
				\item\label{inclusionsii}
				For each $k\leq n$ and for every $\alpha>\frac12$,
				\begin{align*}
					\left\{f\in C^{n+1}(\R):~ f^{(p)}(x)=\O\left(|x|^{-p+n-k-\alpha}\right)
					\text{ as } |x|\to\infty,\;p=n-k,\ldots,n+1\right\} \subseteq \mW^{n}_{k}(\R).
				\end{align*}
			\end{enumerate}
		\end{lem}
		\begin{proof}
			By \cite[Lemma~7]{PoSu_Crel09}, we have the implications
			\begin{align}\label{eq:FT in L1}
				f\in L^2(\R)\cap C^1(\R),~f'\in L^2(\R)\quad\Rightarrow\quad\hat{f}\in L^1(\R)\quad\Rightarrow\quad f\in W_0(\R),
			\end{align}
			from which the lemma follows after applying the repeated Leibniz rule.
		\end{proof}
		
		Let us collect some further properties of these function spaces.
		\begin{lem}\label{lem:function spaces inclusions}
			For $n,k\in \N$, $k\leq n$, $p\in\Z_{\geq0}$, the following holds.
			\begin{enumerate}[label=\textnormal{(\alph*)}]
				\item\label{item:2 function spaces inclusions} If $f\in\mW_k^n(\R)$ then $fu\in\mW_{k-1}^n(\R)$ and $f\in\mW_{k-1}^{n-1}(\R)$.
				\item\label{item:1 function spaces inclusions} If $(fu^p)^{(n)}\in W_0(\R)$ and $(fu^{p+1})^{(n+1)}\in W_0(\R)$ then $(fu^p)^{(n+1)}\in W_0(\R)$.
				\item\label{item:3 function spaces inclusions} We have $\mW_n(\R)=\{f\in C^n(\R):~(fu^p)^{(m)}\in W_0(\R)~\text{ for }~0\leq p\leq m\leq n\}.$
			\end{enumerate}
		\end{lem}
		\begin{proof}
			The first statement follows by definition of $\mW_k^n(\R)$. 
			
			We proceed to the second statement.
			As $u'=1$, $u''=0$, we have
			$$(fu^{p+1})^{(n+1)}=(fu^p)^{(n+1)}u+(n+1)(fu^p)^{(n)}.$$
			Therefore,
			\begin{align}\label{eq:derivatives cov}
				(fu^p)^{(n+1)}=(fu^{p+1})^{(n+1)}u^{-1}-(n+1)(fu^p)^{(n)}u^{-1}.
			\end{align}
			The Fourier transform of $u^{-1}$ is in $L^1(\R)$ because $u^{-1},(u^{-1})'\in C^\infty(\R)\cap L^2(\R)$ and we may apply \eqref{eq:FT in L1}. Hence, $u^{-1}\in W_0(\R)$, and because $W_0(\R)$ is an algebra (the convolution of complex measures is a complex measure), the second statement of the lemma follows from \eqref{eq:derivatives cov}. 
			
			The third statement is a straightforward consequence of the second.
		\end{proof}
		
		\subsection{Relative boundedness}
		
		
		\begin{defn}\label{defn:relatively bounded}
			Let $H$ be self-adjoint in $\H$. A linear operator $V$ with domain $\dom V\subseteq\H$ is called (relatively) $H$-bounded if $\dom H\subseteq \dom V$ and there exist $a,b\in[0,\infty)$ such that for all $\psi\in\dom H$ we have
			$$\norm{V\psi}\leq a\norm{H\psi}+b\norm{\psi}.$$
			The infimum over such numbers $a$ is called the $H$-bound of $V$.
		\end{defn}
		
		We collect the following properties of relatively bounded operators.
		\begin{lem}\label{lem:properties relatively bdd}
			Let $H$ be self-adjoint in $\H$.
			\begin{enumerate}[label=\textnormal{(\alph*)}]
				\item\label{item:properties rel bdd: norm} If $V$ is relatively $H$-bounded with $H$-bound $a$, then $V(H-i)^{-1}:\H\to\H$ is bounded, and $a\leq\|V(H-i)^{-1}\|\leq a+b$.
				\item\label{item:properties rel bdd: scaling} If $V$ is relatively $H$-bounded with $H$-bound $a$, then for all $z\in\C$ the operator $zV$ is relatively $H$-bounded with $H$-bound $|z|a$.
				\item\label{item:properties rel bdd: vector space} The space of relatively $H$-bounded operators is a $\C$-vector space and a left $\mB(\H)$-module for the canonical addition, scalar multiplication, and multiplication of operators in $\H$.
				\item\label{item:properties rel bdd: Kato-Rellich} If $V$ is symmetric and relatively $H$-bounded with $H$-bound $a$ then 
				\begin{align*}
					H+tV\quad\text{is self-adjoint on}\quad\dom(H+tV)=\dom(H)
				\end{align*}
				for all $t\in(-\tfrac1a,\tfrac1a)$.
				\item\label{item:properties rel bdd: shift} If $V$ is symmetric and relatively $H$-bounded with $H$-bound $a$, then $V$ is relatively $H+tV$-bounded for all $t\in(-\frac{1}{a},\frac{1}{a})$ with 
				$H+tV$-bound $\leq\frac{a}{1-|t|a}$. Moreover, $\|V(H+tV-i)^{-1}\|\leq\frac{a+b}{1-|t|a}$. 
			\end{enumerate}
		\end{lem}
		\begin{proof}
			Statements \ref{item:properties rel bdd: norm}, \ref{item:properties rel bdd: scaling}, and \ref{item:properties rel bdd: vector space} follow directly from Definition \ref{defn:relatively bounded}. Statement \ref{item:properties rel bdd: Kato-Rellich} follows from the Kato--Rellich theorem \cite[Theorem V-4.3]{Kato} (originally proved by Rellich) combined with \ref{item:properties rel bdd: norm}. From the triangle inequality applied to $\|V\psi\|\leq a\|(H+tV)\psi-tV\psi\|+b\|\psi\|$, it follows that
			$$\|V\psi\|\leq\frac{a}{1-a|t|}\|(H+tV)\psi\|+\frac{b}{1-a|t|}\|\psi\|.$$
			The final statement of \ref{item:properties rel bdd: shift} thus follows from \ref{item:properties rel bdd: norm} which by \ref{item:properties rel bdd: Kato-Rellich} applies to $H+tV$ instead of $H$.
		\end{proof}
		We shall make plenty use of the second resolvent identity, which conveniently also holds in the relatively bounded setting. See also the version for closed operators, cf. \cite[Lemma 2]{Clark}.
		\begin{lem}[second resolvent identity]\label{lem:second resolvent identity}
			Let $H$ be self-adjoint and let $V$ be symmetric and relatively $H$-bounded with $H$-bound $<1$. For each $z\in\C$ with $Im~z\neq 0$, we have
			$$(H+V-z)^{-1}-(H-z)^{-1}=-(H+V-z)^{-1}V(H-z)^{-1} .$$
		\end{lem}
		\begin{proof}
			After noting that $H+V$ is self-adjoint on $\dom(H+V)=\dom(H)$ (see Lemma \ref{lem:properties relatively bdd}\ref{item:properties rel bdd: Kato-Rellich}), the proof is the same as in the bounded case.
		\end{proof}
		
		\section{Multiple operator integrals with relatively bounded arguments}\label{sec:moi}
		We generalize the multiple operator integral to relatively bounded arguments, and analyze its algebraic and analytical properties.
		The construction is inspired by, and compatible with, \cite{HMvN} and \cite{AP}.

		We first recall the definition of the multiple operator integral for bounded arguments, as given in \cite{ACDS,Peller}. For an overview, see \cite{ST19}.\begin{defn}\label{def:MOI}
			Let $n\in\N$, let $H_0,\ldots,H_n$ be self-adjoint in $\mathcal{H}$, and let $V_1,\ldots,V_n\in\mB(\H)$. For a function $\phi:\R^{n+1}\to\C$, we write $\phi\in\BS(\R^{n+1})$ (or $\phi\in\BS$ for short) if there exist a finite\footnote{It is equivalent to use $\sigma$-finite measure spaces under a condition on the norms of $a_i(\cdot,\omega)$, see \cite[p.48]{ST19} (and \cite[Lemma 2.5]{HMvN}).} measure space $(\Omega,d\omega)$ and bounded measurable functions $a_0,\ldots,a_n:\R\times\Omega\to\C$ such that
			\begin{align}\label{divdef1b}
				\phi(\lambda_0,\ldots,\lambda_n)=\int_\Omega a_0(\lambda_0,\omega)\cdots a_n(\lambda_n,\omega)\,d\omega.
			\end{align}
			If $\phi$ is of the above form, then the multiple operator integral (MOI) $T_\phi^{H_0,\ldots,H_n}$ is defined by
			$$T_\phi^{H_0,\ldots,H_n}(V_1,\ldots,V_n)\psi:=\int_\Omega a_0(H_0,\omega) V_1 a_1(H_1,\omega)\cdots V_n a_n(H_n,\omega)\psi\,d\omega\qquad (\psi\in\H),$$
			in which the right-hand side is a Bochner integral.
		\end{defn}
		It follows that the operator $T_\phi^{H_0,\ldots,H_n}:\mB(\H)\times\cdots\times\mB(\H)\to\mB(\H)$ is well-defined and bounded.
		Importantly, the operator $T_\phi^{H_0,\ldots,H_n}$ depends only on $\phi$, not on the measure space $(\Omega,d\omega)$ or the functions $a_j$ \cite[Lemma 4.3]{ACDS}.
		The following extension of the MOI to unbounded arguments is simple but surprisingly powerful, and will be the main tool of this paper.
		
		\begin{defn}\label{def:MOI relatively bdd}
			Let $n\in\Z_{\geq0}$, let $H_0,\ldots,H_n$ be self-adjoint in $\H$, and let $V_j$ be relatively $H_j$-bounded for $j=1,\ldots,n$. Let $\alpha=(\alpha_1,\ldots,\alpha_n)\in\Z_{\geq0}^n$ be such that $V_j(H_j-i)^{-\alpha_j}\in\mB(\H)$, and define $u^\alpha:=u_1^{\alpha_1}\cdots u_n^{\alpha_n}$, where $u_j(\lambda_0,\ldots,\lambda_n):=\lambda_j-i$. For any measurable $\phi:\R^{n+1}\to\C$ such that $\phi u^\alpha\in\BS(\R^{n+1})$ we define the multiple operator integral with relatively bounded arguments as
			\begin{align}\label{eq:def MOI relatively bdd}
				\bT_\phi^{H_0,\ldots,H_n}(V_1,\ldots,V_n):=T^{H_0,\ldots,H_n}_{\phi u^\alpha}(V_1(H_1-i)^{-\alpha_1},\ldots,V_n(H_n-i)^{-\alpha_n}).
			\end{align}
		\end{defn}
		\begin{lem}\label{thm:well defined}
			Definition \ref{def:MOI relatively bdd} is well-defined, and for all $V_1,\ldots,V_n\in\mB(\H)$ we have
			$$\bT^{H_0,\ldots,H_n}_\phi(V_1,\ldots,V_n)=T^{H_0,\ldots,H_n}_\phi(V_1,\ldots,V_n),$$
			where $n\in\N$, $H_0,\ldots,H_n$ are self-adjoint, and $\phi\in\BS(\R^{n+1})$.
		\end{lem}
		\begin{proof}
			Let $\alpha,\beta\in\Z_{\geq0}^n$ be such that $V_j(H_j-i)^{-\alpha_j}$ is bounded for all $j$. Then, $V_j(H_j-i)^{-\alpha_j-\beta_j}$ is bounded as well. Moreover, it follows from Definition \ref{def:MOI} that
			\begin{align*}
				T_{\phi u^{\alpha+\beta}}^{H_0,\ldots,H_n}(V_1(H_1-i)^{-\alpha_1-\beta_1},\ldots,V_n(H_n-i)^{-\alpha_n-\beta_n})=T_{\phi u^{\alpha}}^{H_0,\ldots,H_n}(V_1(H_1-i)^{-\alpha_1},\ldots,V_n(H_n-i)^{-\alpha_n}).
			\end{align*}
			Hence, the right-hand side of \eqref{eq:def MOI relatively bdd} is independent from $\alpha$, yielding the first statement of the lemma. Taking $\alpha=(0,\ldots,0)$ yields the second.
		\end{proof}
		An important special case of Definition \ref{def:MOI relatively bdd} is obtained when the symbol $\phi$ is a divided difference $\phi=f^{[n]}$ of a function $f\in C^n(\R)$. The divided difference is defined recursively by
		\begin{align*}
			f^{[0]}(\lambda)&:=f(\lambda),\\
			f^{[n]}(\lambda_0,\ldots,\lambda_n)&:=\frac{f^{[n-1]}(\lambda_0,\ldots,\lambda_{n-1})-f^{[n-1]}(\lambda_1,\ldots,\lambda_n)}{\lambda_0-\lambda_n},
		\end{align*}
		where the fraction is continuously extended when $\lambda_0=\lambda_n$. 
		We shall moreover use the following alternate representation of $f^{[n]}$.
		\begin{lem}\label{lem:div_rep} Let $f\in C^n(\R)$ such that $f^{(n)}\in W_0(\R)$, with $f^{(n)}(\lambda)=\int e^{ix\lambda}d\mu(x)$. Then, for all $\lambda_0,\ldots,\lambda_n \in \R,$
			\begin{align*}
				f^{[n]}(\lambda_0,\ldots,\lambda_n)= \int_\R\int_{\Delta_n} e^{is_0x\lambda_0}e^{is_1x\lambda_1}\cdots e^{is_nx\lambda_n}\;  ds\, d\mu(x),
			\end{align*}
			where the simplex $\Delta_n=\{s=(s_0,\ldots,s_n)\in\R^{n+1}_{\geq0}~:~s_0+\cdots+s_n=1\}$ is endowed with the flat measure with total measure $1/n!$. Clearly, $\left(\R\times\Delta_n,  d\mu(x)\times ds\right)$ is a finite measure space, hence $f^{[n]}\in\BS$.
		\end{lem}
		\begin{proof}
			The proof follows directly from the arguments presented in \cite[Lemmas 5.1 and 5.2]{PSS}.
		\end{proof}
		\begin{cor}\label{cor:bd_moi_bdd}
			Let $n\in\N$, $H_0,\ldots,H_n$ be self-adjoint operators in $\mathcal H$ and let $f\in C^n(\R)$ be such that $f^{(n)}\in W_0(\R)$, $f^{(n)}(\lambda)=\int e^{ix\lambda}d\mu(x)$. 
			For all $V_1,\ldots,V_n\in\mathcal B(\mathcal H)$ and all $\psi\in\mathcal H$ we have
			$$T^{H_0,\ldots,H_n}_{f^{[n]}}(V_1,\ldots,V_n)\psi=\int_\R\int_{\Delta_n}e^{is_0xH_0}V_1e^{is_1xH_1}\cdots V_ne^{is_nxH_n}\psi\,ds\,d\mu(x).$$
			Consequently, for all $\alpha,\alpha_j\in[1,\infty]$ with $\tfrac{1}{\alpha_1}+\cdots+\tfrac{1}{\alpha_n}=\tfrac{1}{\alpha}$,
			\begin{align}
				\label{fourierbound}
				\big\|T_{f^{[n]}}^{H_0,\dots,H_n}\big\|_{\mathcal{S}^{\alpha_1}\times\cdots\times\mathcal{S}^{\alpha_n}\to\mathcal{S}^\alpha}
				\le \frac{1}{n!}\big\|\mu\big\|.
			\end{align}
		\end{cor}
		\begin{proof}
			Combining Definition~\ref{def:MOI} with Lemma~\ref{lem:div_rep}, and using $\phi = f^{[n]}$, the corollary follows.
		\end{proof}
		
		The following theorem explains the power of Definition \ref{def:MOI relatively bdd}.
		
		\begin{thm}\label{thm:MOI well-defined special case}
			For $n\in\N$, $H_0,\ldots,H_n$ self-adjoint in $\H$, and $f\in\mW_n(\R)$, we have $f^{[n]}u^{(1,\ldots,1)}\in\BS(\R^{n+1})$, and hence Definition \ref{def:MOI relatively bdd} defines a multilinear map
			$$\bT^{H_0,\ldots,H_n}_{f^{[n]}}:\mathbf X_{H_1}\times\cdots \times \mathbf X_{H_n}\to \mB(\H),$$
			where $\mathbf X_H$ is the space of $H$-bounded operators (which is a linear space by Lemma \ref{lem:properties relatively bdd}\ref{item:properties rel bdd: vector space}). More generally, if $\alpha=(\alpha_1,\ldots,\alpha_n)\in\{0,1\}^n$ is such that $V_j\in\mB(\H)$ if $\alpha_j=0$, and $f\in\mW^n_{|\alpha|}(\R)$, then $f^{[n]}u^\alpha\in\BS(\R^{n+1})$, and hence Definition \ref{def:MOI relatively bdd} defines a multilinear map
			$$\bT_{f^{[n]}}^{H_0,\ldots,H_n}:\mathbf X_{H_1}^{\alpha_1}\times\cdots\times\mathbf X_{H_n}^{\alpha_n}\to\mB(\H),$$
			where $\mathbf X^1_{H}=\mathbf X_{H}$ and $\mathbf X^0_{H}=\mB(\H)$.
		\end{thm}
		\begin{proof}
			The first statement of the theorem follows by combining Lemma \ref{lem:div_rep} with the formula
			\begin{align}\label{formul:div_diff}
				&f^{[n]}(\lambda_0,\ldots,\lambda_n)=\sum_{p=0}^n(-1)^{n-p}\sum_{0<j_1<\cdots<j_{p}\leq n}
				(fu^p)^{[p]}(\lambda_0,\lambda_{j_1},\ldots,\lambda_{j_p})
				u^{-1}(\lambda_1)\cdots u^{-1}(\lambda_n),
			\end{align}
			which was shown in \cite[Eq. (24)]{vNS22}. Let $\mJ=\{j\in\{1,\ldots,n\}~:~\alpha_j=1\}$, and $\mJ^\text{c}=\{1,\ldots,n\}\setminus\mJ$. The second statement of the theorem follows by combining Lemma \ref{lem:div_rep} with the formula
			\begin{align*}
				&f^{[n]}(\lambda_0,\ldots,\lambda_n)=\sum_{p=n-|\alpha|}^n(-1)^{n-p}\sum_{\substack{0<j_1<\cdots<j_{p}\leq n\\ \{j_1,\ldots,j_p\}\supseteq\mJ^{\text{c}}}}
				(fu^{n-|\alpha|+p})^{[p]}(\lambda_0,\lambda_{j_1},\ldots,\lambda_{j_p})
				\prod_{j\in\mJ}u^{-1}(\lambda_j),
			\end{align*}
			which can be shown in a similar way (see \cite[eq. (21)]{vNS23}).
		\end{proof}
		
		\subsection{Perturbation formulas for the generalised multiple operator integral}
		\label{sct:subsct:properties MOI}
		
		It turns out that several useful identities of the multiple operator integral extend to the case of relatively bounded arguments. In order not to interrupt the flow of the paper, their proofs are in the appendix.
		
		The first such identity is a change of variables rule which adds resolvents of the superscript operators to the arguments. This identity forms a key step in deriving summability estimates for multiple operator integrals, as witnessed in the bounded case by \cite{CS18,vNS22,vNS23,vNvS21a}, and in the relatively bounded case in Sections \ref{sec:operator differ} and \ref{sct:Spectral shift functions}.
		\begin{thm}[change-of-variables]\label{thm:cov rel bdd}
			Let $H_0,\ldots,H_n$ be self-adjoint in $\H$. Let $\mJ\subseteq\{1,\ldots,n\}$ be a subset so that $V_k$ is bounded for each $k\in\{1,\ldots,n\}\setminus\mJ$ and $V_k$ is relatively $H_k$-bounded for each $k\in\mJ$. For each $f\in\mW_{|\mJ|}^n(\R)$ and each $j\in\{0, 1,\ldots,n\}$ (the boundary cases $j=0$ and $j=n$ being understood as in Theorem \ref{thm:cov}) we have
			\begin{align*}
				\bT^{H_0,\ldots,H_n}_{f^{[n]}}(V_1,\ldots,V_n)=&\bT^{H_0,\ldots,H_n}_{(fu)^{[n]}}(V_1,\ldots,V_{j-1},V_j(H_j-i)^{-1},V_{j+1},\ldots,V_n)\\
				&-\bT^{H_0,\ldots,H_{j-1},H_{j+1},\ldots,H_n}_{f^{[n-1]}}(V_1,\ldots,V_{j-1},V_j(H_j-i)^{-1}V_{j+1},V_{j+2},\ldots,V_n).
			\end{align*}
		\end{thm}
		\begin{proof}
			Follows from Theorem \ref{thm:cov rel bdd2} in the appendix.
		\end{proof}
		
		By repeating the proof of the above change of variables rule, one expands a generalised multiple operator integral into a finite sum of multiple operator integrals with bounded arguments. This in particular generates a useful norm bound of the (generalised) multiple operator integral.
		\begin{thm}\label{thm:bT in MOIs and bound}
			Let $n\in\N$, let $H_0,\ldots,H_n$ be self-adjoint in $\H$. For each $j\in\{1,\ldots,n\}$, let $V_j$ be a relatively $H_j$-bounded operator. Let $f\in \mW_n(\R)$ (i.e., $f\in C^n(\R)$ such that $(fu^p)^{(p)}\in W_0(\R)$ for all $p\in\{0,\ldots,n\}$). Writing $\tilde V_{j,l}:=V_{j+1}(H_{j+1}-i)^{-1}\cdots V_l(H_l-i)^{-1}\in\mB(\H)$, we have
			\begin{align*}
				\bT^{H_0,\ldots,H_n}_{f^{[n]}}(V_1,\ldots,V_n)=\sum_{p=0}^n(-1)^{n-p}\sum_{0<j_1<\cdots<j_p\leq n} T^{H_0,H_{j_1},\ldots,H_{j_p}}_{(fu^p)^{[p]}}(\tilde V_{0,j_1},\ldots,\tilde V_{j_{p-1},j_p})\tilde V_{j_p,n}\,.
			\end{align*} 
			Here, the case $p = 0$ on the right-hand side of the above identity is understood as $f(H_0)\tilde{V}_{0,n}$. Therefore, if $\widehat{(fu^p)^{(p)}}\in L^1(\R)$ we have
			\begin{align*}
				\norm{\bT^{H_0,\ldots,H_n}_{f^{[n]}}(V_1,\ldots,V_n)}\leq \sum_{p=0}^n\vect{n}{p}\frac{1}{p!}\absnorm{\widehat{(fu^p)^{(p)}}}\prod_{j=1}^n\norm{V_j(H_j-i)^{-1}},
			\end{align*}
			and if $\widehat{(fu^p)^{(p)}}\notin L^1$ then $\|\widehat{(fu^p)^{(p)}}\|_1$ may be replaced by $\|\mu_p\|$, where $(fu^p)^{(p)}(x)=\int e^{ixy}d\mu_p(y)$.
		\end{thm}
		\begin{proof}
			The first identity follows from repeated application of Theorem \ref{thm:cov rel bdd} and Lemma \ref{thm:well defined}. The bound follows from Corollary \ref{cor:bd_moi_bdd}.
		\end{proof}
		
		The following superscript difference identity is crucial for calculating derivatives and Taylor series. The proof is surprisingly subtle.
		\begin{thm}\label{thm:superscript difference}
			Let $n\in\N$, let $H_0,\ldots,H_n$ be self-adjoint in $\H$, and let $f\in\mW_{n+1}(\R)$. For each $j\in\{1,\ldots,n\}$, let $V_j$ be a $H_j$-bounded symmetric operator with $H_j$-bound $a$. For all $t\in(-\tfrac1a,\tfrac1a)$ and $j\in\{1,\ldots,n\}$ we have
			\begin{align*}
				& \bT^{H_0,\ldots,H_{j-1},H_j+tV_j,H_{j+1},\ldots,H_n}_{f^{[n]}}(V_1,\ldots,V_n)-\bT^{H_0,\ldots,H_n}_{f^{[n]}}(V_1,\ldots,V_n)\\
				&\hspace{90pt}=t\bT^{H_0,\ldots,H_{j-1},H_j+tV_j,H_j,\ldots,H_n}_{f^{[n+1]}}(V_1,\ldots,V_{j},V_j,\ldots,V_n).
			\end{align*}
		\end{thm}
		\begin{proof}
			This is a special case of Theorem \ref{thm:superscript difference bT} in the appendix.
		\end{proof}

		\section{Operator differentiation and Taylor series}\label{sec:operator differ}
		
		\subsection{First-order differentiation}
		\label{First-order differentiation}
		
		In this subsection, we shall prove the following first-order differentiability result.
		
		\begin{thm}\label{thm:derivative}
			Let $H$ be self-adjoint in (the separable Hilbert space) $\H$, let $V$ be symmetric and relatively $H$-bounded, and let $u(x):=x-i$. Let $f\in C^1(\R)$ be such that $f,(fu)'\in W_0(\R)$. Then $t\mapsto f(H+tV)$ is Gateaux differentiable at $t=0$ in strong operator topology, and its Gateaux derivative equals
			\begin{align}\label{1st_der_formula}
				\frac{d}{dt} f(H+tV)\big|_{t=0}=\bT_{f^{[1]}}^{H, H}(V)=T^{H,H}_{(fu)^{[1]}}(V(H-i)^{-1})-f(H)V(H-i)^{-1}.
			\end{align}
			If, in addition, $V(H-i)^{-1}\in\S^\infty$, then $t\mapsto f(H+tV)$ is Gateaux differentiable in $t=0$ in operator norm topology, and its Gateaux derivative is given by \eqref{1st_der_formula}.
		\end{thm}
		
		
		The first step in the proof of Theorem \ref{thm:derivative} is the following adaptation of the Duhamel formula.
		\begin{lem}[Weighted Duhamel formula]\label{lem:Duhamel}
			Let $H$ be self-adjoint in $\H$, and let $V$ be symmetric and relatively $H$-bounded with $H$-bound $<1$. For all $x\in\R$ and all $\psi\in\H$ we have
			\begin{align*}
				e^{ix(H+V)}(H-i)^{-1}\psi-e^{ixH}(H-i)^{-1}\psi=ix\int_0^1 e^{isx(H+V)}V(H-i)^{-1}e^{i(1-s)xH}\psi\,ds,
			\end{align*}
			where the integral on the right-hand side is a Bochner integral. Equivalently, for all $\varphi\in\dom H$ we have
			\begin{align*}
				e^{ix(H+V)}\varphi-e^{ixH}\varphi=ix\int_0^1 e^{isx(H+V)}Ve^{i(1-s)xH}\varphi\,ds.
			\end{align*}
			
		\end{lem}
		\begin{proof}
			The proof looks similar to the proof of \cite[Lemma 5.2]{ACDS}, but needs some careful adjustments. Let $\psi\in\H$ be arbitrary. The function $G:\R\to\H$ defined by
			$$G(s):=e^{isx(H+V)}e^{i(1-s)xH}(H-i)^{-1}\psi$$
			is continuous by Stone's theorem. Moreover, for $\varphi:=(H-i)^{-1}\psi$, we have $e^{i(1-s)xH}\varphi=(H-i)^{-1}e^{i(1-s)xH}\psi\in\ran(H-i)^{-1}=\dom H\subseteq\dom V$. Therefore, we obtain
			\begin{align}
				&\lim_{t\to0}\left(\frac{G(s+t)-G(s)}{t}\right)\nonumber\\
				=&\,\lim_{t\to0}\left(e^{i(s+t)x(H+V)}\frac{\left(e^{i(1-s-t)xH}-e^{i(1-s)xH}\right)}{t}\varphi+\frac{\left(e^{i(t+s)x(H+V)}-e^{isx(H+V)}\right)}{t}\left(e^{i(1-s)xH}\varphi\right)\right)\nonumber\\
				=&\,-ixe^{isx(H+V)}He^{i(1-s)xH}\varphi+ixe^{isx(H+V)}(H+V)e^{i(1-s)xH}\varphi\nonumber\\
				=&\,ixe^{isx(H+V)}V(H-i)^{-1}e^{i(1-s)xH}\psi,\label{eq:derivative of G}
			\end{align}
			by using Stone's theorem again and the fact that $e^{i(s+t)x(H+V)}$ is uniformly bounded by $1$. (Indeed, to define this exponential we have already tacitly used the fact that $H+V$ is self-adjoint by
			Lemma~\ref{lem:properties relatively bdd}(d).) By \eqref{eq:derivative of G}, the function $g:\R\to\H$ defined by
			\begin{align*}
				g(s):=ixe^{isx(H+V)}V(H-i)^{-1}e^{i(1-s)xH}\psi
			\end{align*}
			is the derivative of $G$. As $V(H-i)^{-1}$ is bounded, it follows that $g$ is continuous -- without the right multiplication of $(H-i)^{-1}$, this argument would fail. The $\H$-valued fundamental theorem of calculus states
			\begin{align*}
				G(1)-G(0)=\int_0^1 g(s)\,ds,
			\end{align*}
			where the right-hand side is a Bochner integral. By the definitions of $G$ and $g$, we obtain the lemma.
		\end{proof}
		
		\begin{lem}\label{lem:cor of Duhamel}
			Let $H$ be self-adjoint in $\H$, and let $V$ be symmetric and relatively $H$-bounded with $H$-bound $<1$. Let $f\in C^1(\R)$ with $f,f'\in W_0(\R)$, and let $\mu_{\hat{f'}}$ be the measure of which $f'$ is the inverse Fourier transform. For all $\psi\in\H$ we have
			$$f(H+V)(H-i)^{-1}\psi-f(H)(H-i)^{-1}\psi=\int_\R \int_0^1 e^{isx(H+V)}V(H-i)^{-1}e^{i(1-s)xH}\psi\,ds\,d\mu_{\hat{f'}}(x)$$
		\end{lem}
		\begin{proof}
			The statement follows from Lemma \ref{lem:Duhamel}, the dominated convergence theorem, and the fact that $ix\hat{f}(x)=\widehat{f'}(x)$ (when $\hat{f},\widehat{f'}$ are functions, and similarly when they are measures).
		\end{proof}
		
		We may write the above formula in the more convenient MOI notation as follows.
		\begin{lem}\label{lem:cor of cor of Duhamel}
			Let $H$ be self-adjoint in $\H$, and let $V$ be symmetric and relatively $H$-bounded with $H$-bound $<1$. For all $f\in C^1(\R)$ with $f,f'\in W_0(\R)$ we have
			\begin{align}\label{eq:cor of Duhamel}
				\left(f(H+V)-f(H)\right)(H-i)^{-1}=T^{H+V,H}_{f^{[1]}}(V(H-i)^{-1}).
			\end{align}
		\end{lem}
		\begin{proof}
			This is simply Lemma \ref{lem:cor of Duhamel} combined with the definition of $T^{H+V,H}_{f^{[1]}}(V(H-i)^{-1})\psi$, cf. Corollary \ref{cor:bd_moi_bdd}.
		\end{proof}

		\begin{thm}\label{thm:difference}
			Let $H$ be self-adjoint in $\H$, and let $V$ be symmetric and relatively $H$-bounded with $H$-bound $<1$. Let $f\in\mW_1(\R)$, that is, let $f\in C^1(\R)$ be such that $f,(fu)'\in W_0(\R)$. We have
			$$f(H+V)-f(H)=T^{H+V,H}_{(fu)^{[1]}}(V(H-i)^{-1})-f(H+V)V(H-i)^{-1}.$$
		\end{thm}
		\begin{proof}
			From $f,(fu)'\in W_0(\R)$ it follows that $f'\in W_0(\R)$, and so we may apply Lemma \ref{lem:cor of cor of Duhamel}.
			We now combine Lemma \ref{lem:cor of cor of Duhamel} with \cite[Theorem 3.10]{vNS22}, i.e., Theorem \ref{thm:cov} below for $j=n=1$. We obtain
			\begin{align*}
				(f(H+V)-f(H))(H-i)^{-1}&=T^{H+V,H}_{(fu)^{[1]}}(V(H-i)^{-2})-T^{H+V}_{f^{[0]}}()V(H-i)^{-2}\\
				&=\big(T^{H+V,H}_{(fu)^{[1]}}(V(H-i)^{-1})-f(H+V)V(H-i)^{-1}\big)(H-i)^{-1}.
			\end{align*}
			As $\ran (H-i)^{-1}=\dom H$, we obtain
			\begin{align}\label{eq:bdd ops acting on psi}
				(f(H+V)-f(H))\psi&=\big(T^{H+V,H}_{(fu)^{[1]}}(V(H-i)^{-1})-f(H+V)V(H-i)^{-1}\big)\psi
			\end{align}
			for all $\psi\in\dom H$. As $H$ is densely defined and the operators acting on $\psi$ on both sides of \eqref{eq:bdd ops acting on psi} are bounded, the theorem follows.
		\end{proof}
		

		We now prove our main theorem of this section (existence of first-order Gateaux derivative).
		\begin{proof}[Proof of Theorem \ref{thm:derivative}.]
			As $tV$ is relatively bounded with $H$-bound $<1$ for all $t\in (-\frac1a,\frac1a)$ (see Lemma \ref{lem:properties relatively bdd}) we may apply Theorem \ref{thm:difference} with $tV$ in place of $V$. We obtain
			\begin{align}\label{eq:difference quotient}
				\frac{f(H+tV)-f(H)}{t}=T^{H+tV,H}_{(fu)^{[1]}}(V(H-i)^{-1})-f(H+tV)V(H-i)^{-1}.
			\end{align}
			Firstly, we derive
			\begin{align*}
				\|f(H+tV)-f(H)\|&\leq |t|\|T^{H+tV,H}_{(fu)^{[1]}}(V(H-i)^{-1})\|+|t|\|f(H+tV)V(H-i)^{-1}\|\\
				&\leq |t|\left(\|\mu_1\|+\|f\|_\infty\right)\|V(H-i)^{-1}\|,
			\end{align*}
			which shows that
			\begin{align}\label{eq:norm convergence f(H+tV)}
				f(H+tV)\to f(H)
			\end{align}
			in norm as $t\to0$, where $(fu)'(x)=\int e^{ixy}d\mu_1(y)$. Moreover, we note for $\psi\in\H$ that
			$$e^{isx(H+tV)}V(H-i)^{-1}e^{i(1-s)xH}\psi\to e^{isxH}V(H-i)^{-1}e^{i(1-s)xH}\psi,$$
			for all $s\in[0,1]$ and all $x\in\R$. By the dominated convergence theorem, the above implies
			\begin{align}\label{eq:strong convergence DOI}
				T^{H+tV,H}_{(fu)^{[1]}}(V(H-i)^{-1})\psi\to T^{H,H}_{(fu)^{[1]}}(V(H-i)^{-1})\psi
			\end{align}
			for all $\psi\in\H$, i.e., convergence in the strong operator topology.
			If $V(H-i)^{-1}$ is compact, a similar argument shows that
			\begin{align}\label{eq:norm convergence DOI}
				T^{H+tV,H}_{(fu)^{[1]}}(V(H-i)^{-1})\to T^{H,H}_{(fu)^{[1]}}(V(H-i)^{-1})
			\end{align}
			in norm. Taking $t\to0$ in the formula \eqref{eq:difference quotient} and applying either \eqref{eq:norm convergence f(H+tV)} and \eqref{eq:strong convergence DOI} or \eqref{eq:norm convergence f(H+tV)} and \eqref{eq:norm convergence DOI} yields the theorem.
		\end{proof}

	}
	
	\subsection{Higher order differentiability}
	\begin{thm}\label{thm:higher derivatives in norm}
		Let $n\in\N$ and $f\in\mW_{n}(\R)$. The following holds.
		\begin{enumerate}[label=\textnormal{(\alph*)}]
			\item\label{item:n times diff at 0} For $H$ self-adjoint in $\H$ and $V$ symmetric and relatively $H$-bounded we have, in strong operator topology,
			\begin{align}\label{eq:nth deriv at 0}
				\frac{1}{n!}\frac{d^n}{dt^n} f(H+tV)\big|_{t=0}=\bT^{H,\ldots,H}_{f^{[n]}}(V,\ldots,V).
			\end{align}
			\item\label{item:n times diff at t0} For $H$ self-adjoint in $\H$ and $V$ symmetric and relatively $H$-bounded with $H$-bound $a\in[0,\infty)$, for all $t_0\in(-\tfrac1a,\tfrac1a)$ we have, in strong operator topology,
			\begin{align*}
				\frac{1}{n!}\frac{d^n}{dt^n} f(H+tV)\big|_{t=t_0}=\bT^{H+t_0V,\ldots,H+t_0V}_{f^{[n]}}(V,\ldots,V).
			\end{align*}
		\end{enumerate}
		Moreover, if we assume $V(H-i)^{-1}\in\S^\infty$, or we assume $f\in\mW_{n+1}(\R)$, then the above derivatives exist in operator norm topology.
	\end{thm}
	\begin{proof}
		From Lemma \ref{lem:properties relatively bdd}\ref{item:properties rel bdd: Kato-Rellich} and Lemma \ref{lem:properties relatively bdd}\ref{item:properties rel bdd: shift}, it follows that \ref{item:n times diff at 0} and \ref{item:n times diff at t0} are equivalent.
		The rest of the proof consists of showing that \ref{item:n times diff at 0} holds by induction to $n$. The base of the induction, that is, $n=1$, follows from Theorem \ref{thm:derivative}.
		
		Suppose, as an induction hypothesis, that  \ref{item:n times diff at 0} holds for a given $n\in\N$. It remains to show that \ref{item:n times diff at 0} holds when $n$ is replaced by $n+1$.
		Hence for each $f\in\mW_{n+1}(\R)$ we are to show the existence of, and compute, the strong operator (SOT) limit of the quotient
		\begin{align}
			&\frac{\frac{d^n}{ds^n} f(H+sV)\big|_{s=t}-\frac{d^n}{ds^n} f(H+sV)\big|_{s=0}}{t}\nonumber\\
			=&\,\frac{n!}{t}\left(\bT^{H+tV,\ldots,H+tV}_{f^{[n]}}(V,\ldots,V)-\bT^{H,\ldots,H}_{f^{[n]}}(V,\ldots,V)\right)\nonumber\\
			=&\,n!\sum_{k=0}^n\bT_{f^{[n+1]}}^{\overbrace{\scriptstyle H+tV,\ldots,H+tV}^{n-k+1\text{ times}},\overbrace{\scriptstyle H,\ldots,H}^{ k+1\text{ times}}}(V,\ldots,V),\label{eq:quotient}
		\end{align}
		where we have used \ref{item:n times diff at t0} in the first step, and used a telescoping sum together with Theorem \ref{thm:superscript difference} in the second step. 
		Next, we find the SOT-limit of the $0$th summand in \eqref{eq:quotient}, noting that the limit of the other summands can be found in a similar way. By Theorem \ref{thm:bT in MOIs and bound}, we have
		\begin{align}\label{represntation}
			\nonumber&\bT^{H+tV,\ldots,H+tV, H}_{f^{[n+1]}}(V,\ldots,V)\\
			\nonumber=&\sum_{p=0}^{n+1}(-1)^{n+1-p}\sum_{0<j_1<\cdots<j_p\leq n+1} T^{H_0(t),H_{j_1}(t),\ldots,H_{j_p}(t)}_{(fu^p)^{[p]}}(\tilde V_{0,j_1}(t),\ldots,\tilde V_{j_{p-1},j_p}(t))\tilde V_{j_p,n+1}(t)\\
			=&(-1)^{n+1}f(H+tV)\left(V(H+tV-i)^{-1}\right)^{n}\, \left(V(H-i)^{-1}\right)\\
			\nonumber&+\sum_{p=1}^{n+1}(-1)^{n+1-p}\sum_{0<j_1<\cdots<j_p\leq n+1} T^{H_0(t),H_{j_1}(t),\ldots,H_{j_p}(t)}_{(fu^p)^{[p]}}(\tilde V_{0,j_1}(t),\ldots,\tilde V_{j_{p-1},j_p}(t))\tilde V_{j_p,n+1}(t)
		\end{align}
		where $H_0(t)=\cdots=H_{n}(t)=H+tV$, $H_{n+1}(t)=H$, and $\tilde V_{j,l}(t):=V(H_{j+1}(t)-i)^{-1}\cdots V(H_l(t)-i)^{-1}\in\mB(\H)$. By Corollary \ref{cor:bd_moi_bdd}, we have
		\begin{align*}
			&T^{H_0(t),H_{j_1}(t),\ldots,H_{j_p}(t)}_{(fu^p)^{[p]}}(\tilde V_{0,j_1}(t),\ldots,\tilde V_{j_{p-1},j_p}(t))\\
			=& \int_\R\int_{\Delta_p} e^{ixs_0H_0(t)}\tilde V_{0,j_1}(t)\, e^{ixs_1H_{j_1}(t)}  \tilde V_{j_1, j_2}(t)\cdots\tilde V_{j_{p-1},j_p}(t)e^{ixs_p H_{J_p}(t)}\,ds\, d\mu_p(x),
		\end{align*}
		where the measure $\mu_p=\widehat{(fu^p)^{(p)}}$ is the distributional Fourier transform of $(fu^p)^{(p)}\in C_b(\R)\subseteq\S'(\R)$.
		Next we note the following facts:
		\begin{itemize}
			\item By Lemma \ref{lem:properties relatively bdd}\ref{item:properties rel bdd: shift}, $\|V(H+tV-i)^{-1}\|\leq 2(a+b)$ for all $t\in [-\frac{1}{2a},\frac{1}{2a}]$.
			\item By the second resolvent identity (Lemma \ref{lem:second resolvent identity}), for each $z\in\C $ with $Im~z\neq 0$, $t\mapsto (H+tV-z)^{-1}$, $t\mapsto V(H+tV-z)^{-1}$ are continuous on $[-\frac{1}{2a},\frac{1}{2a}]$ in operator norm, and \[\lim_{t\to 0}(H+tV-z)^{-1}=(H-z)^{-1}\quad\text{and }\quad \lim_{t\to 0}V(H+tV-z)^{-1}=V(H-z)^{-1}.\]
			\item For each fixed $x, s\in\R$, by \cite[Theorem VIII.20]{Simon}, $t\mapsto e^{ixs(H+tV)}$, $t\mapsto f(H+tV)$ are both continuous on $[-\frac{1}{2a},\frac{1}{2a}]$ in the strong operator topology, and 
			\[SOT-\lim_{t\to 0}e^{is(H+tV)}=e^{isH}\quad \text{ and }\quad SOT-\lim_{t\to 0}f(H+tV)=f(H).\]
			\item \begin{align}\label{sot_lim_1}
				\nonumber&SOT-\lim_{t\to 0} e^{ixs_0H_0(t)}\tilde V_{0,j_1}(t)\, e^{ixs_1H_{j_1}(t)} \tilde V_{j_1, j_2}(t)\cdots\tilde V_{j_{p-1},j_p}(t)e^{ixs_pH_{j_p}(t)}\\
				=&\, e^{ixs_0H}\tilde V_{0,j_1}(0)\, e^{ixs_1H} \tilde V_{j_1, j_2}(0)\cdots\tilde V_{j_{p-1},j_p}(0)e^{ixs_pH}
			\end{align}
			\item By Lemma \ref{lem:properties relatively bdd}\ref{item:properties rel bdd: shift}, $\|e^{ixs_0H_0(t)}\tilde V_{0,j_1}(t)\, e^{ixs_1H_{j_1}(t)} \tilde V_{j_1, j_2}(t)\cdots\tilde V_{j_{p-1},j_p}(t)e^{ixs_pH_{j_p}(t)}\|\leq \left(2(a+b)\right)^{j_1+\ldots+j_p}$ for all $t\in[-\frac{1}{2a},\frac{1}{2a}]$.
		\end{itemize}
		In conclusion, from the above, along with \cite[Corollary III.6.16]{DanSch}, we conclude that 
		\begin{align}\label{sot_lim_2}
			&SOT-\lim_{t\to 0}f(H+tV)\left(V(H+tV-i)^{-1}\right)^{n}\, \left(V(H-i)^{-1}\right)=f(H)\left(V(H-i)^{-1}\right)^{n+1}, \text{ and}\\[5pt]
			&SOT-\lim_{t\to 0}T^{H_0(t),H_{j_1}(t),\ldots,H_{j_p}(t)}_{(fu^p)^{[p]}}(\tilde V_{0,j_1}(t),\ldots,\tilde V_{j_{p-1},j_p}(t))=T^{H_0(0),H_{j_1}(0),\ldots,H_{j_p}(0)}_{(fu^p)^{[p]}}(\tilde V_{0,j_1}(0),\ldots,\tilde V_{j_{p-1},j_p}(0)).\label{sot_lim_3}
		\end{align}
		The above limits together with \eqref{represntation} imply that 
		\[SOT-\lim_{t\to 0}\bT^{H+tV,\ldots,H+tV, H}_{f^{[n+1]}}(V,\ldots,V)=\bT^{H,\ldots,H}_{f^{[n+1]}}(V,\ldots,V).\]
		By similar computations, we conclude that 
		\begin{align*}
			SOT-\lim_{t\to 0}\bT_{f^{[n+1]}}^{\overbrace{\scriptstyle H+tV,\ldots,H+tV}^{n-k+1\text{ times}},\overbrace{\scriptstyle H,\ldots,H}^{k+1\text{ times}}}(V,\ldots,V)=\bT^{H,\ldots,H}_{f^{[n+1]}}(V,\ldots,V)
		\end{align*}
		for every $0\leq k\leq n$. Therefore, from \eqref{eq:quotient}, we conclude that 
		\begin{align}\label{final_indu_step}
			\frac{1}{(n+1)!}\frac{d^{n+1}}{dt^{n+1}} f(H+tV)\big|_{t=0}=\bT^{H,\ldots,H}_{f^{[n+1]}}(V,\ldots,V)
		\end{align}
		in the strong operator topology. If we assume that $V(H-i)^{-1}$ is compact, then noting that the limits in \eqref{sot_lim_1}, \eqref{sot_lim_2}, and \eqref{sot_lim_3} exist in operator norm, we conclude that \eqref{final_indu_step} exists in operator norm.

		Next we show that, for $f \in \mW_{n+1}(\R)$, \eqref{eq:nth deriv at 0} exists in operator norm. By the same arguments as in the start of the proof, \ref{item:n times diff at 0} and \ref{item:n times diff at t0} are equivalent in this case as well. We shall prove \ref{item:n times diff at 0} by induction on $n$. The base case $n = 0$ is trivial. We shall show that \eqref{eq:nth deriv at 0} holds when $n$ is replaced by $n+1$. By using \eqref{eq:quotient}, introducing another telescoping sum, and applying Theorem \ref{thm:superscript difference} again, now using the fact that $f\in\mW_{n+2}(\R)$, we obtain
		\begin{align*}
			&\frac{\frac{d^n}{ds^n} f(H+sV)\big|_{s=t}-\frac{d^n}{ds^n} f(H+sV)\big|_{s=0}}{t}-(n+1)!\bT^{H,\ldots,H}_{f^{[n+1]}}(V,\ldots,V)\\
			=&\,n!\sum_{k=0}^n\sum_{l=0}^{n-k}\left(\bT^{\overbrace{\scriptstyle H+tV,\ldots,H+tV}^{l+1\text{ times}},\overbrace{\scriptstyle H,\ldots,H}^{n-l+1\text{ times}}}_{f^{[n+1]}}(V,\ldots,V)-\bT^{\overbrace{\scriptstyle H+tV,\ldots,H+tV}^{l\text{ times}},\overbrace{\scriptstyle H,\ldots,H}^{n-l+2\text{ times}}}_{f^{[n+1]}}(V,\ldots,V)\right)\\
			=&\,tn!\sum_{l=0}^n(n+1-l)\bT^{\overbrace{\scriptstyle H+tV,\ldots,H+tV}^{l+1\text{ times}},\overbrace{\scriptstyle H,\ldots,H}^{n-l+2\text{ times}}}_{f^{[n+2]}}(V,\ldots,V).
		\end{align*}
		From the operator-norm bound of $\bT^{H_0,\ldots,H_{n+1}}_{f^{[n+1]}}(V,\ldots,V)$ given by Theorem \ref{thm:bT in MOIs and bound}, we obtain the bound
		\begin{align*}
			&\norm{\frac{\frac{d^n}{ds^n} f(H+sV)\big|_{s=t}-\frac{d^n}{ds^n} f(H+sV)\big|_{s=0}}{t}-(n+1)!\bT^{H,\ldots,H}_{f^{[n+1]}}(V,\ldots,V)}\\
			\leq&\, |t|(n+2)!\sum_{p=0}^{n+2}\vect{n+2}{p}\frac{1}{p!}\|\mu_p\|\max\left(\norm{V(H-i)^{-1}},\norm{V(H+tV-i)^{-1}}\right)^{n+1}.
		\end{align*}
		From the above facts, we have $\norm{V(H+tV-i)^{-1}}\leq\frac{a+b}{1-a}$. Hence, as $t\to0$, the quotient on the left-hand side of \eqref{eq:quotient} converges in norm to $(n+1)!\bT^{H,\ldots,H}_{f^{[n+1]}}(V,\ldots,V)$, which shows by induction that \ref{item:n times diff at 0} holds for all $n$. This completes the proof of the theorem.
	\end{proof}
	\begin{rem}\label{rem:multi-Gateaux}
		Multi-variable Gateaux derivatives of operator functions can consequently be expressed in terms of (generalised) MOIs as
		$$D^n_Hf[V_1,\ldots,V_n]\equiv\frac{d}{dt_1}\cdots\frac{d}{dt_n}f(H+\sum_{i=1}^n t_iV_i)\Big|_{t=0}=\sum_{\substack{\text{permutations $\sigma$}\\ \text{of $1,\ldots,n$}}}\bT_{f^{[n]}}^{H,\ldots,H}(V_{\sigma(1)},\ldots,V_{\sigma(n)}),$$
		and analytic and algebraic properties of multi-variable Gateaux derivatives can be derived from those of MOIs, cf. Theorem \ref{thm:cov rel bdd2}, Theorem \ref{thm:superscript difference bT}, and \cite[Equation (4.2)]{vNvS21a}.
	\end{rem}
	
	\begin{thm}\label{thm:Taylor_series}
		(Taylor series) Let $H$ be self-adjoint (possibly unbounded) in $\H$ and let $V$ be symmetric and relatively $H$-bounded with $H$-bound $a\in[0,1)$. Let $f\in \cap_{n=1}^\infty\mW_n(\R)$ satisfy $\|\mu_n\|\leq c_fC_f^{n}n!$ for constants $c_f,C_f$ such that $(1+C_f)\|V(H-i)^{-1}\|<1$, where $(fu^n)^{(n)}(x)=\int e^{ixy}d\mu_n(y)$. We then have the Taylor expansion
		$$f(H+V)=\sum_{n=0}^\infty\frac{1}{n!}\frac{d^n}{dt^n} f(H+tV)\big|_{t=0},$$
		which converges absolutely in operator norm.
	\end{thm}
	\begin{proof}
		By Theorem \ref{thm:higher derivatives in norm}, the Taylor remainder can be written as
		\begin{align*}
			R_{n,H,f}(V):=&f(H+V)-\sum_{k=0}^{n-1}\frac{1}{k!}\frac{d^k}{dt^k} f(H+tV)\big|_{t=0}\\
			=&f(H+V)-\sum_{k=0}^{n-1}\bT^{H,\ldots,H}_{f^{[k]}}(V,\ldots,V).
		\end{align*}
		By induction and Theorem \ref{thm:superscript difference} it follows that 
		\begin{align*}
			R_{n,H,f}(V)=\bT^{H+V,H,\ldots,H}_{f^{[n]}}(V,\ldots,V).
		\end{align*}
		By applying Theorem \ref{thm:bT in MOIs and bound} (note that $H_0=H+V$ does not appear in the product $\prod_{j=1}^n\|V(H_j-i)^{-1}\|$), we obtain
		\begin{align*}
			\norm{R_{n,H,f}(V)}&\leq \sum_{p=0}^n\vect{n}{p}\frac{1}{p!}\|\mu_p\|\|V(H-i)^{-1}\|^{n},
		\end{align*}
		and $\|\bT^{H,\ldots,H}_{f^{[n]}}(V,\ldots,V)\|$ satisfies the exact same bound.
		By applying our assumption $\|\mu_p\|\leq c_fC_f^{p}p!$ and putting the binomial theorem in reverse, we obtain
		\begin{align*}
			\norm{R_{n,H,f}(V)}&\leq \sum_{p=0}^n\vect{n}{p}c_fC_f^{p}\|V(H-i)^{-1}\|^{n}=c_f(1+C_f)^n\|V(H-i)^{-1}\|^{n}.
		\end{align*}
		If $(1+C_f)\|V(H-i)^{-1}\|<1$ then $\|R_{n,H,f}(V)\|\to0$ as $n\to\infty$ and so 
		$$f(H+V)=\|\cdot\|\text{-}\lim_{n\to\infty}\sum_{k=0}^{n-1}\frac{1}{k!}\frac{d^k}{dt^k} f(H+tV)\big|_{t=0}.$$
		
		By the same argument, we have the absolute norm-convergence
		$$\sum_{n=0}^\infty\norm{\frac{1}{n!}\frac{d^n}{dt^n} f(H+tV)\big|_{t=0}}\leq \sum_{n=0}^\infty c_f(1+C_f)^n\|V(H-i)^{-1}\|^n=\frac{c_f}{1-(1+C_f)\|V(H-i)^{-1}\|}<\infty\,,$$
		concluding the proof.
	\end{proof}
	
	\begin{lem}\label{lem:examples Taylor series}
		For $n\in \N$ and $f\in \bigcap_{n=1}^\infty \mathscr{W}_n(\R)$, define the measures $\mu_n := \widehat{(f u^n)^{(n)}}$ and $\nu_n := \widehat{(f^{(n)} u^n)}$, the distributional Fourier transforms of $(f u^n)^{(n)}$ and $(f^{(n)} u^n)$, respectively. Denote $\WT:=\{f\in\cap_{n=1}^\infty\mathscr{W}_n(\R)~:~\exists c_f,C_f\geq 0~\forall n\in\N: \|\mu_n\|\leq c_fC_f^n n!\}$ for the class of functions to which Theorem \ref{thm:Taylor_series} (the Taylor series) applies. Denote
		\sloppy
		${\WCT}:=\{f\in\cap_{n=1}^\infty\mathscr{W}_n(\R)~:~\exists c_f,C_f\geq 0~\forall n\in\N: \|\nu_n\|\leq c_fC_f^n n!\}$.
		The following holds.
		\begin{enumerate}
			[label=\textnormal{(\alph*)}]
			\item\label{item:inclusion} $\WCT\subseteq\WT$.
			\item\label{item:prod} If $f,g\in\WCT$, then $fg\in\WCT$.
			\item\label{item:bdd rational} All bounded rational functions are in $\WCT$.
			\item\label{item:Gaussian} We have $f\in\WCT$ for $f(x):= e^{i\xi x}\,e^{-cx^2}$, $c>0$, $\xi\in\R$.
		\end{enumerate}
	\end{lem}
	\begin{proof}
		Throughout the proof, we use the notation $a(n)\prec b(n)$ if there exist $c,C\geq0$ such that $a(n)\leq c C^n b(n) $ for all $n\in\N$.
		
		Statements \ref{item:inclusion} and \ref{item:prod} are both a straightforward check that employs the Fourier convolution theorem.
		
		For \ref{item:bdd rational}, we let $u_w(x):=x-w$ for some $w \in \C$ with $Im~w\neq 0$. Then  
		$$(u_w^{-1})^{(n)}=(-1)^nn!u_w^{-n-1},$$
		and
		$$u_w^{-1}u=1+(w-i)u_w^{-1},$$ 
		and these formulas together with the binomial theorem imply that 
		\begin{align}\label{eq:u_w formula}
			(u_w^{-1})^{(n)}u^n= (-1)^n n! \sum_{k=0}^{n} \vect{n}{k}(w-i)^k u_w^{-k-1}.
		\end{align}
		By \cite[Lemma 7]{PoSu_Crel09} we have 
		\begin{align}\label{eq:bd u_w L1}
			\|\widehat{u_w^{-k-1}}\|_1\prec \|u_w^{-k-1}\|_2+\|(u_w^{-k-1})'\|_2.
		\end{align}
		Moreover, for all $m\in\{1,\ldots, n+2\}$ we have
		\begin{align}\label{eq:bd u_w L2}
			\|u_w^{-m}\|_2^2 \prec&~\int_\mathbb{R} \frac{1}{(1+x^2)^m}\,dx \leq \int_\mathbb{R} \frac{1}{1+x^2}\,dx\prec 1.
		\end{align}
		Combining \eqref{eq:u_w formula}, \eqref{eq:bd u_w L1}, and \eqref{eq:bd u_w L2}, we obtain
		\begin{align*}
			\|((u_w^{-1})^{(n)}u^n)\,\hat{~}\,\|_1 & \prec n!.
		\end{align*}
		Hence, $u_w\in\WCT$. 
		Consequently, by \ref{item:prod}, 
		$$u_{w_1}^{-1} \cdots u_{w_n}^{-1}\in{\WCT}$$
		for all $w_i \in \mathbb{C}$ with $Im~w_i\neq 0$. Since every bounded rational function on $\R$ is a linear combination of such functions, \ref{item:bdd rational} is proven.

		We now prove \ref{item:Gaussian}. We set $c=1$ and $\xi=0$ for simplicity, noting that the general case is proved analogously.
		We shall use the fact that, for $m\in\{0,\ldots,2n\}$,
		\begin{align}\label{eq:bd x^m gaussian}
			\|\widehat{x^m e^{-x^2}}\|_1\prec \sqrt{m!}
		\end{align}
		(see, e.g., \cite[proof of Proposition 8(v)]{vNvS21a}).
		Write $f(x)=e^{-x^2}$. 
		We note that $u^n(x)=\sum_{l=0}^n\vect{n}{l}x^l(-i)^{n-l}$, and that from the well-known explicit expression for the Hermite polynomials it follows that $$f^{(n)}(x)=n!\sum_{m=0}^{\lfloor\frac{n}{2}\rfloor}\frac{(-1)^{m+n}}{m!(n-2m)!}(2x)^{n-2m}e^{-x^2}.$$
		So,
		\begin{align*}
			f^{(n)}(x)u^n(x)
			&= n!\sum_{l=0}^n\vect{n}{l}\sum_{m=0}^{\lfloor\frac{n}{2}\rfloor}\frac{(-i)^{n-l}(-1)^{m+n}}{m!(n-2m)!}\,x^l(2x)^{n-2m}e^{-x^2}.
		\end{align*}
		Combining the latter with \eqref{eq:bd x^m gaussian} yields
		\begin{align}\label{eq:fun bound}
			\|\widehat{f^{(n)}u^n}\|_1&\prec n!\sum_{l=0}^n\vect{n}{l}\sum_{m=0}^{\lfloor\frac{n}{2}\rfloor}\frac{1}{m!(n-2m)!}\sqrt{(n+l-2m)!}.
		\end{align}
		Stirling's approximation gives $k!\prec k^k$ and $k^k\prec k!$ for $k\in\{0,\ldots,2n\}$. This allows us to estimate, for all $m, l\leq n$,
		\begin{align}
			\frac{1}{m!(n-2m)!}\sqrt{(n+l-2m)!}&\leq \frac{\sqrt{(2n-2m)!}}{m!(n-2m)!}\nonumber\\
			&\prec \frac{(2n-2m)^{n-m}}{m^m(n-2m)!}\nonumber\\
			&\prec \frac{m!(n-m)!}{(2m)!(n-2m)!}\nonumber\\
			&=\frac{m!(n-m)!}{n!}\frac{n!}{(2m)!(n-2m)!}\nonumber\\
			&\leq \vect{n}{2m}.\label{eq:some factorials}
		\end{align}
		Using the fact that $\sum_{s=0}^p\vect{p}{s}=2^p\prec 1$ for all $p\leq n$, from combining \eqref{eq:fun bound} with \eqref{eq:some factorials} we get
		\begin{align*}
			\|\widehat{f^{(n)}u^n}\|_1&\prec n!\sum_{l=0}^n\vect{n}{l}\sum_{m=0}^{\lfloor\frac{n}{2}\rfloor}\vect{n}{2m}
			\prec n!,
		\end{align*}
		as desired. We conclude that $f\in\WCT$.
	\end{proof}
	
	Due to the relatively strong assumptions required for the convergence of the Taylor series for the function 
	$t\mapsto f(H+tV)$, it is often more appropriate to focus on the Taylor remainder instead. In the next section, we examine the spectral properties of the Taylor remainder, particularly in relation to trace formulas and spectral shift functions.

	\section{Spectral shift functions}\label{sec:ssf}
	\label{sct:Spectral shift functions}
	
	The next definition introduces the function class that features in our main theorem. Although it looks technical at first, the class neatly shows the influences of the summability parameter $n$, and the order $k$ of the spectral shift function, on the required decay and differentiability of the function $f$. Let $u(\lambda)=(\lambda-i), \lambda\in\R$. For $x,y \in \R$, let $x\vee y=\max\{x,y\}$.
	\begin{defn}\label{def:function class Q_n}
		Let $n,k\in \mathbb{N}$. Let $\mathfrak{Q}_n^k(\R)$ denote the space of all functions $f\in C^k(\R)$ such that
		\begin{enumerate}[label=\textnormal{(\alph*)}]
			\item\label{item:def25i} $fu^{2n}\in C_b(\R)$, 
			\item\label{item:def25ii} $f^{(l)}u^{n + l + 1}\in C_0(\R)$, $1\leq l\leq k$,~\text{and}
			\item\label{item:def25iii} $\widehat{f^{(k)}u^{k\vee n}}\in L^1(\R)$.
		\end{enumerate}
	\end{defn}
	We note the following properties of $f\in\mathfrak{Q}_n^k(\R)$.
	\begin{prop}\label{Fourierprop}
		Let $n, k\in\N$. Then, for each $f\in\mathfrak{Q}_n^k(\R)$,
		\begin{enumerate}[label=\textnormal{(\alph*)}]
			\item\label{item:func-1} $\widehat{f^{(l)}u^{s}}\in L^1(\R)$ for $s,l\in\mathbb Z_{\geq0}$ such that $s\leq n\vee l$ and $l\leq k$;
			\item\label{item:func-2} $\left((fu^{s})^{(l)}\right)\hat{~}\in L^1(\R)$ for $s,l\in\mathbb Z_{\geq0}$ such that $s\leq n\vee l$ and $l\leq k$;
			\item\label{item:func-3} $\mathfrak{Q}_n^k(\R)\subset \mW_k(\R)$.
		\end{enumerate}
	\end{prop}
	\begin{proof}
		Item \ref{item:func-1} follows from Definition \ref{def:function class Q_n}, the convolution theorem of the Fourier transform, and the well-known fact that $\hat{g}\in L^1(\R)$ for all $g\in C^1(\R)$ with $g,g'\in L^2(\R)$. Item \ref{item:func-2} follows from \ref{item:func-1} and the Leibniz rule. Item \ref{item:func-3} is immediate.
	\end{proof}
	Let $\mathfrak{R}_n:=\{(\cdot-z)^{-l}: Im(z)\neq 0, l\in\N, l\geq 2n+1\}$, and let $\mathcal{S}(\R)$ denote the class of Schwartz functions on $\R$. Then it follows from the definition of $\mathfrak{Q}_n^k(\R)$ that 
	\[C_c^{k+1}(\R)\subset \mathfrak{Q}_n^k(\R)\subset C_0(\R),~~ \mathcal{S}(\R)\subset \mathfrak{Q}_n^k(\R), \text{ and } \mathfrak{R}_n\subset\mathfrak{Q}_n^k(\R).\]
	As one straightforwardly checks that $\mathfrak{Q}_n^k(\R)$ is an algebra, arbitrary products of functions in $C_c^{n+1}(\R)\cup\mathcal S(\R)\cup\mathfrak{R}_n$ are in $\mathfrak{Q}_n^k(\R)$ as well.
	
	\begin{rem}
		For all $n, k_1, k_2\in \Nat$, Proposition \ref{Fourierprop}\ref{item:func-1} implies that
		\[\mathfrak{Q}_n^{k_1}(\R)\subseteq \mathfrak{Q}_n^{k_2}(\R) \text{ whenever } k_2\leq k_1.\]
	\end{rem}
	
	\noindent For proof of the following Lemma, we refer to \cite[Lemma 2.4]{vNS23}.
	\begin{lem}\label{lem:partial integration}
		Let $n,m\in\Nat\cup\{0\}$, and let $\mu$ be a (finite) complex Radon measure on $\R$. For every $\epsilon\in(0,1]$ and every $k\in\N\cup\{0\}$ there exists a complex Radon measure $\tilde\mu_{k,\epsilon}$ with $$\norm{\tilde\mu_{k,\epsilon}}\leq\left\|u^{-1-\epsilon}\right\|_1
		\norm{\mu}$$ and
		\begin{align}
			\label{lsint}
			\int_\R g^{(n)}u^m \,d\mu=\int_\R g^{(n+k)}u^{m+k+\epsilon}d\tilde\mu_{k,\epsilon}
		\end{align}
		for all $g\in C^{n+k}(\R)$ satisfying $g^{(n+l)}u^{m+l}\in C_0(\R)$, $l=0,\ldots,k-1$ and $g^{(n+k)}u^{m+k-1}\in L^1(\R)$.
	\end{lem}
	\smallskip

	\smallskip
	
	Throughout this section, we shall refer to the following hypothesis.
	\begin{hypo}\label{mainhypo}
		Let $n\in \N$, and let $n\geq2$. Let $H$ be self-adjoint in $\Hcal$, and let $V$ be symmetric and relatively $H$-bounded with $H$-bound $<1$, that is,
		$$\norm{V\psi}\leq a\norm{H\psi}+b\norm{\psi} \text{ for all } \psi\in\dom H,$$
		for some $a\in[0,1)$ and $b\in[0,\infty)$, such that 
		\begin{align}\label{eq:cond}
			V(H-i)^{-p}\in\Snp,\qquad p\in\{1,\ldots,n\}.
		\end{align}
	\end{hypo}
	We firstly remark that a symmetric operator $V$ satisfying \eqref{eq:cond} is automatically relatively $H$-bounded, but the specific numbers $a$ and $b$ play a central role in several results below. We secondly remark that Hypothesis \ref{mainhypo} for $(n,H,V)$ implies Hypothesis \ref{mainhypo} for $(n+1,H,V)$ \textit{et cetera}. For improved function classes for the case $n=1$, see \cite[Theorem 4.1]{vNS22}.
	
	\subsection{Krein spectral shift functions}\label{sct:Krein}
	In this subsection, we will obtain the Krein trace formula (first-order spectral shift formula) and the associated spectral shift function under Hypothesis \ref{mainhypo}. The following two lemmas are essential to prove our main results in this subsection.
	\begin{lem}\label{aprlem1}
		Assume Hypothesis \ref{mainhypo}. There exists a sequence $\{V_k\}_{k\in\N}$ of finite rank self-adjoint operators such that 
		\begin{enumerate}[label=\textnormal{(\alph*)}]
			\item\label{item:aprlem1ii} $\|V_k(H-i)^{-1}\|, \|V_k(H+V_k-i)^{-1}\|\leq \frac{a+b}{1-a}$;\\
			\item\label{item:aprlem1iii} $\|\cdot\|_{n/p}-\lim\limits_{k\to\infty}V_k(H-i)^{-p}=V(H-i)^{-p}$ for $1\leq p\leq n$;\\
			\item\label{item:aprlem1iv} $\|V_k(H-i)^{-p}\|_{n/p}\,\leq \|V(H-i)^{-p}\|_{n/p}$ for $1\leq p\leq n$.
		\end{enumerate}
	\end{lem}
	\begin{proof}
		Let $E_{H}(\cdot)$ be the spectral measure of $H$. Let $k\in\N$, and let $P_k=E_{H}((-k,k))$. Then the following statements are true.
		\begin{itemize}
			\item for each $\psi \in \H$, $P_k\psi\in \dom H$,
			\item $SOT-\lim\limits_{k\to\infty}P_k= I$.
		\end{itemize}
		
		Note that 
		\begin{align}\label{belongL1}
			P_kVP_k((H-i)^{-n}P_k+P_k^\perp)\,=\, P_kVP_k(H-i)^{-n}P_k\,=\, P_kV(H-i)^{-n}P_k\in\Scal^1.
		\end{align}
		By functional calculus it follows that $((H-i)^{-n}P_k+P_k^\perp)$  is boundedly invertible. Hence, from \eqref{belongL1}, we have 
		\begin{align}\label{belongL2}
			P_kVP_k\,=\, P_kV(H-i)^{-n}P_k ((H-i)^{-n}P_k+P_k^\perp)^{-1}\in\Scal^1.
		\end{align}
		For each fixed $k\in\N$, by the spectral theorem there exists a sequence $\{E^l_{P_kVP_k}\}_{l\in\N}$ of finite rank projections such that 
		\[E^l_{P_kVP_k}P_kVP_k=P_kVP_kE^l_{P_kVP_k}, \text{ and } \|\cdot\|_1-\lim\limits_{l\to\infty}E^l_{P_kVP_k}P_kVP_k=P_kVP_k.\]
		Therefore, there exists a strictly increasing sequence of natural numbers $\{l_k\}_{k\in\N}$ such that 
		\[\left\|E^{l_k}_{P_kVP_k}P_kVP_k-P_kVP_k\right\|_1<\tfrac{1}{k}.\]
		Let $V_k=E^{l_k}_{P_kVP_k}P_kVP_k$. Then 
		\begin{enumerate}[label=\textnormal{(\alph*)}]
			\item For each $\psi\in\H$, $\|V_k\psi\|=\left\|E^{l_k}_{P_kVP_k}P_kVP_k\psi\right\|\leq \left\|VP_k\psi\right\|\leq a\|H\psi\|+ b\|\psi\|$. Therefore, by Lemma \ref{lem:properties relatively bdd}\ref{item:properties rel bdd: norm} and Lemma \ref{lem:properties relatively bdd}\ref{item:properties rel bdd: shift}, $ \|V_k(H-i)^{-1}\|, \|V_k(H+V_k-i)^{-1}\|\leq \frac{a+b}{1-a}$.
			\item We have
			\begin{align*}
				&\|V_k(H-i)^{-p}-V(H-i)^{-p}\|_{n/p}\\ \leq~&\|V_k(H-i)^{-p}-P_kVP_k(H-i)^{-p}\|_{n/p}+\|P_kV(H-i)^{-p}P_k-V(H-i)^{-p}\|_{n/p}\\
				\leq~ & \tfrac{1}{k}+ \|P_kV(H-i)^{-p}P_k-V(H-i)^{-p}\|_{n/p}\longrightarrow 0 \text{ as } k\to \infty.
			\end{align*}
			\item We have $\|V_k(H-i)^{-p}\|_{n/p}\,=\|E^{l_k}_{P_kVP_k}P_kV(H-i)^{-p}P_k\|_{n/p}\,\leq \|V(H-i)^{-p}\|_{n/p}$.
		\end{enumerate}
		This completes the proof.
	\end{proof}
	\textbf{Convention:} Let $H$ be a self-adjoint operator in $\H$, and $V$ be a symmetric and relatively $H$-bounded with $H$ bound $a$ such that $V(H-i)^{-l}\in \S^k$ for some $l,k$. Then it is easy to show that $(H-i)^{-l}V$ equals $\left(V(H+i)^{-l}\right)^*$ on $\dom V$. As $\dom V$ is dense in $\H$, the operator $(H-i)^{-l}V$ has a unique bounded extension on the whole of $\H$, and we still denote this extension by $(H-i)^{-l}V$. Also, $V(H-i)^{-l}\in \S^k$ implies $ (H-i)^{-l}V \in\S^k$ with $\|(H-i)^{-l}V\|_k=\|V(H+i)^{-l}\|_k$.
	
	\begin{lem}\label{aprlem2}
		Let $n, H, V$ satisfy Hypothesis \ref{mainhypo}. Let $(R,W)\in\{(0,V), (0,V_k), (V_k,V-V_k), (V_k,V_l-V_k)\}$, where $\{V_k\}_{k\in\N}$ is given by Lemma \ref{aprlem1}. For $t\in[0,1]$, we denote
		\[H^t:=H+R+tW.
		\]
		For $1\leq p\leq n$, define 
		\begin{align}\label{kk3}
			\alpha_{n,V,H}:=\max_{1\leq p\leq n}\|V(H-i)^{-p}\|_{n/p}.
		\end{align}
		Then 
		\begin{enumerate}[label=\textnormal{(\alph*)}]
			\item\label{esta} $(H^t-i)^{-p}W, W(H^t-i)^{-p}\in \Snp$, and
			\item\label{estb} $\|(H^t-i)^{-p}W\|_{n/p}=\|W(H^t-i)^{-p}\|_{n/p}\leq  2^p\,\frac{1+b}{1-a}\,\alpha_{n,W,H}\,\max\limits_{\ell=0,\, p-1}\{\alpha_{n,V,H}^{\ell}\}$.
		\end{enumerate}
		
	\end{lem}
	
	\begin{proof}
		As $V_k\in\mB(\H)$, and the $H$-bound of $V$ is $<1$, it follows that $R, W$ are relatively $H$-bounded and $H^t$-bounded. The equality in \ref{estb} follows directly from $(H^t+i)^{-l}W=(W(H^t-i)^{-l})^*$ (see the above convention), as this operator equals $(H^t-i)^{-l}W$ up to a unitary.
		
		We prove the remaining results using mathematical induction on $p$. Let $p=1$. Let $V_t:=H^t-H$. By the second resolvent identity Lemma \ref{lem:second resolvent identity}, we have
		\begin{align*}
			W[(H^t-i)^{-1}-(H-i)^{-1}]=-W(H-i)^{-1}V_t(H^t-i)^{-1}.
		\end{align*}
		Note that $V_t=R+tW$ is $H$-bounded with $H$-bound $\leq a<1$. Hence, by Lemma \ref{lem:properties relatively bdd}\ref{item:properties rel bdd: shift}, we have $\|V_t(H^t-i)^{-1}\|\leq \frac{a+b}{1-a}$. Therefore, the above identity implies that
		\begin{align}\label{eq:est 2nd res id}
			\|W(H^t-i)^{-1}\|_{n}&\leq (1+ \frac{a+b}{1-a}) \|W(H-i)^{-1}\|_{n}\nonumber\\
			&=\frac{1+b}{1-a} \|W(H-i)^{-1}\|_{n}.
		\end{align} Thus \ref{esta} and \ref{estb} are true for $p=1$. 
		
		Suppose \ref{esta} and \ref{estb} are true for $p=1,2,\ldots,m$ for some $m\in\{1,\ldots, n-1\}$. We will prove that \ref{esta} and \ref{estb} also hold for $p=m+1$. Indeed, a telescopic sum together with repeated applications of Lemma~\ref{lem:second resolvent identity} yields
		
		\begin{align*}
			&W[(H^t-i)^{-(m+1)}-(H-i)^{-(m+1)}]\\
			&= W\sum_{l=0}^{m}\big((H^t-i)^{-1}\big)^{l} \big((H^t-i)^{-1}-(H-i)^{-1} \big)\big((H-i)^{-1}\big)^{m-l}\\
			&=-\sum_{l=0}^{m}W(H^t-i)^{-(l+1)}V_t(H-i)^{-(m+1-l)}.
		\end{align*}
		Therefore, using H\"older's inequality from the above identity we conclude
		\begin{align}\label{estc}
			\nonumber&\|W(H^t-i)^{-(m+1)}\|_{n/(m+1)}\\
			\nonumber\leq~ & \|W(H-i)^{-(m+1)}\|_{n/(m+1)}+\|W(H^t-i)^{-1}\|_{n}\,\|V(H-i)^{-m}\|_{n/m}\\
			&\hspace*{1in}+\sum_{l=1}^{m}\|W(H^t-i)^{-l}\|_{n/l}\,\|V(H-i)^{-(m+1-l)}\|_{n/(m+1-l)}.
		\end{align}
		Let us write $C_p= \frac{1+b}{1-a}\,\alpha_{n,W,H}\,\max\limits_{\ell=0,\, p-1}\{\alpha_{n,V,H}^{l}\}$, and consider the three terms on the right-hand side of \eqref{estc}. The first term is bounded by $\alpha_{n,W,H}\leq C_{m+1}$, and thanks to \eqref{eq:est 2nd res id} the second term is also bounded by $C_{m+1}$. By induction hypothesis, the third term is bounded by $\sum_{l=1}^m2^lC_l\|V(H-i)^{-(m+1-l)}\|_{n/(m+1-l)}\leq\sum_{l=1}^m2^lC_{m+1}.$ 
		In total, we obtain
		\begin{align*}
			\|W(H^t-i)^{-(m+1)}\|_{n/(m+1)}\leq 2^{m+1}C_{m+1},
		\end{align*}
		proving the induction step. The lemma follows.
	\end{proof}
	
	Next, we recall known norm estimates of the classical MOI. The following estimate is a consequence of \cite[Theorem 4.4.7  and Remark 4.4.2]{ST19}, first proven for $\tilde H=H$ in \cite[Theorem 5.3 and Remark 5.4]{PSS}. 
	\begin{thm}\label{inv-thm}
		Let $k\in\mathbb{N}$ and let $\alpha,\alpha_1,\ldots,\alpha_k\in(1,\infty)$ satisfy $\tfrac{1}{\alpha_1}+\cdots+\tfrac{1}{\alpha_k}=\tfrac{1}{\alpha}$. Let $H,\tilde H$ be self-adjoint operators in $\Hcal$. Assume that $V_m\in\mathcal{S}^{\alpha_m},\, 1\leq m\leq k$. Then there exists $c_{\alpha,k}>0$ such that
		\begin{align}\label{est}
			\|T^{\tilde{H},H,\ldots,H}_{f^{[k]}}(V_1,V_2,\ldots,V_k)\|_{\alpha}\leq c_{\alpha,k}\, \|f^{(k)}\|_\infty \prod\limits_{1\leq m \leq k}\|V_m\|_{\alpha_m}
		\end{align}
		for every $f\in C_b^k(\R)$ such that $\widehat{f^{(k)}}\in L^1(\R)$.
	\end{thm}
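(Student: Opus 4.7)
The plan is to reduce Theorem \ref{inv-thm} to the single-operator Peller--Potapov--Sukochev bound for multiple operator integrals (essentially the content of \cite[Thm~4.4.7]{ST19}) and to dispose of the two different self-adjoint operators $\tilde H,H$ by a standard doubling trick.

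First I would pass from $\tilde H,H$ on $\mathcal H$ to a single operator on $\mathcal H\oplus\mathcal H$. Set $\mathbf H:=\tilde H\oplus H$, let $\mathbf V_1:=\left(\begin{smallmatrix}0 & V_1\\ 0 & 0\end{smallmatrix}\right)$, and, for $\ell\ge 2$, $\mathbf V_\ell:=\left(\begin{smallmatrix}0 & 0\\ 0 & V_\ell\end{smallmatrix}\right)$. These block embeddings preserve every Schatten norm, and the integral representation in Theorem \ref{dfn:MOI}(1) shows that the $(1,2)$-block of $T^{\mathbf H,\ldots,\mathbf H}_{f^{[k]}}(\mathbf V_1,\ldots,\mathbf V_k)$ equals $T^{\tilde H,H,\ldots,H}_{f^{[k]}}(V_1,\ldots,V_k)$. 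Hence a Schatten bound in the single-operator case transfers directly to the mixed case with the same constant.

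Second, I would invoke the divided-difference Schur multiplier bound in the single-operator setting. One fixes finite-rank $V_\ell$, for which the MOI reduces to a finite sum $\sum f^{[k]}(\lambda_{j_0},\ldots,\lambda_{j_k})\,E_{j_0}V_1E_{j_1}\cdots V_kE_{j_k}$ over spectral projections $E_j$ of $\mathbf H$, and estimates the resulting Schur-type $k$-linear form on $\mathcal S^{\alpha_1}\times\cdots\times\mathcal S^{\alpha_k}$ by $c_{\alpha,k}\|f^{(k)}\|_\infty$. The assumption $\widehat{f^{(k)}}\in L^1(\R)$ enters only to ensure, through Theorem \ref{dfn:MOI}, that the MOI is already a well-defined bounded multilinear map; the point of \cite[Thm~4.4.7]{ST19} is to \emph{upgrade} the a priori (possibly huge) Fourier-$L^1$ bound to the sup-norm of $f^{(k)}$, after which a density argument in $C_b^k$ finishes the job.

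The main obstacle, which I would quote rather than reprove, is precisely this sup-norm upgrade. It rests on the UMD property of $\mathcal S^\alpha$ for $\alpha\in(1,\infty)$ (via vector-valued Littlewood--Paley theory and Marcinkiewicz-type multiplier theorems applied to the triangular truncation), together with the factorization of $f^{[k]}$ through a bounded Schur multiplier on the Haagerup tensor product $\mathcal S^{\alpha_1}\otimes_h\cdots\otimes_h\mathcal S^{\alpha_k}$. The failure of these tools at the endpoints is exactly what forces the restriction $\alpha,\alpha_\ell\in(1,\infty)$. Once the single-operator estimate is granted as a black box, the doubling reduction above yields the theorem in full generality.
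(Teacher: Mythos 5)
Your argument is correct, but it takes a slightly different route than the paper, which offers no proof at all: it simply cites \cite[Theorem~4.4.7 and Remark~4.4.2]{ST19}, noting that this result (already formulated there for a pair $\tilde H,H$) was first established in the single-operator case $\tilde H=H$ in \cite[Theorem~5.3 and Remark~5.4]{PSS}. You instead reduce the two-operator statement to the single-operator one via the block-doubling trick $\mathbf H=\tilde H\oplus H$, and that reduction is sound: with your choices of $\mathbf V_\ell$, only the $(1,2)$-block of $T^{\mathbf H,\ldots,\mathbf H}_{f^{[k]}}(\mathbf V_1,\ldots,\mathbf V_k)$ is nonzero and it equals $T^{\tilde H,H,\ldots,H}_{f^{[k]}}(V_1,\ldots,V_k)$, while the block embeddings preserve every Schatten norm, so the constant transfers unchanged. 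What this buys you is that you only need to quote the single-operator version from \cite{PSS} rather than the sharpened statement in \cite{ST19}. Both routes treat the genuinely hard step, upgrading the \emph{a priori} $\|\widehat{f^{(k)}}\|_1$ bound to a $\|f^{(k)}\|_\infty$ bound on $\mathcal S^{\alpha_1}\times\cdots\times\mathcal S^{\alpha_k}\to\mathcal S^\alpha$ for $\alpha\in(1,\infty)$, as a black box, which is appropriate. The one glue step your sketch leaves implicit, and which you should at least mention, is that the Fourier-formula MOI of Theorem~\ref{dfn:MOI} coincides, on $\{f\in C_b^k:\widehat{f^{(k)}}\in L^1\}$, with the MOI used in \cite{PSS,ST19}; this consistency is standard but is what makes the citation legitimately applicable.
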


	The following theorem gives a key estimate to establish Krein's trace formula under Hypothesis \ref{mainhypo}.
	\begin{thm}\label{Krthm0}
		Let $n, H, V$ satisfy Hypothesis \ref{mainhypo}. Let $(R,W)\in\{(0,V),(V_l,V-V_l),(V_l,V_k-V_l)\}$, where $\{V_k\}_{k\in\N}$ is given by Lemma~\ref{aprlem1}. Let $t\in[0,1]$, and denote $H^t=H+R+tW$. Then for each $f\in \mathfrak{Q}_n^1(\R)$, we have
		\begin{align}\label{eq:two parts}
			\bT^{H^t,H^t}_{f^{[1]}}(W)= T^{H^t,H^t}_{(fu^n)^{[1]}}(W(H^t-i)^{-n})-\sum_{k=0}^{n-1}(fu^k)(H^t)W(H^t-i)^{-(k+1)}\, \in\mathcal{S}^1,
		\end{align}
		and each of the above $n+1$ summands is $\S^1$-continuous in $t$. 
		Moreover,
		\begin{align}\label{Newtraceest}
			\left|\Tr\left(T^{H^t,H^t}_{(fu^n)^{[1]}}(W(H^t-i)^{-n})\right)\right|&\leq \, \|(fu^n)^{(1)}\|_\infty\,\|W(H^t-i)^{-n}\|_1,\\
			\label{New11}\left|\Tr\left(\sum_{k=0}^{n-1}(fu^k)(H^t)W(H^t-i)^{-(k+1)}\right)\right|&\leq n\|fu^{n-1}\|_\infty\,\|W(H^t-i)^{-n}\|_1.
		\end{align}
	\end{thm}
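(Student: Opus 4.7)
The proof proposal breaks into three parts: first, establish the operator identity \eqref{eq:two parts} by iterating Lemma \ref{MOIweight}; next, verify $\mathcal{S}^1$-membership and continuity of each summand on the right-hand side; and finally, extract the trace bounds \eqref{Newtraceest} and \eqref{New11} from the explicit MOI integral formula combined with cyclicity of the trace.

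For the identity, the plan is to iterate Lemma \ref{MOIweight} (with its $n$ set to $1$) applied to $T^{H^t,H^t}_{(fu^j)^{[1]}}(W(H^t-i)^{-j})$ for $j=0,1,\ldots,n-1$; each application rewrites this MOI as $T^{H^t,H^t}_{(fu^{j+1})^{[1]}}(W(H^t-i)^{-(j+1)})$ minus the correction $(fu^j)(H^t)W(H^t-i)^{-(j+1)}$, and telescoping over $j$ yields \eqref{eq:two parts}. The Fourier-integrability hypotheses $\widehat{(fu^j)'},\widehat{(fu^{j+1})'},\widehat{fu^j}\in L^1(\R)$ needed at each step follow from $f\in\mathfrak{Q}_n^1(\R)$ via Proposition \ref{Fourierprop}(ii). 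For the $\mathcal{S}^1$-analysis, Lemma \ref{aprlem2}(a) gives $W(H^t-i)^{-n}\in\mathcal{S}^1$, so Theorem \ref{dfn:MOI}(2) with $\alpha=\alpha_1=1$ places $T^{H^t,H^t}_{(fu^n)^{[1]}}(W(H^t-i)^{-n})\in\mathcal{S}^1$ with bound $\|\widehat{(fu^n)'}\|_1\|W(H^t-i)^{-n}\|_1$. For the remaining summands one uses the functional-calculus factorisation $(fu^k)(H^t)=(fu^{n-1})(H^t)(H^t-i)^{-(n-1-k)}$ together with the full scale of Schatten bounds in Lemma \ref{aprlem2} over $p\leq n$. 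Continuity in $t$ of each piece is inherited from resolvent continuity of $t\mapsto(H^t-i)^{-1}$, trace-norm continuity of $t\mapsto W(H^t-i)^{-n}$, and dominated convergence in the MOI integral formula.

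For \eqref{Newtraceest}, insert the integral formula from Theorem \ref{dfn:MOI}(1), interchange trace with the integrals (justified by $\mathcal{S}^1$-absolute integrability), apply cyclicity together with $s_0+s_1=1$ and $\int_{\Delta_1}ds=1$, and collapse the Fourier transform to obtain
\[
\Tr\,T^{H^t,H^t}_{(fu^n)^{[1]}}(W(H^t-i)^{-n}) = \Tr\bigl((fu^n)'(H^t)\,W(H^t-i)^{-n}\bigr),
\]
after which $\|(fu^n)'(H^t)\|\leq\|(fu^n)'\|_\infty$ delivers \eqref{Newtraceest}. For \eqref{New11}, note that functional calculus gives $(H^t-i)^{-(k+1)}(fu^k)(H^t)=(fu^{-1})(H^t)=(H^t-i)^{-n}(fu^{n-1})(H^t)$, so cyclicity of the trace yields
\[
\Tr\bigl((fu^k)(H^t)W(H^t-i)^{-(k+1)}\bigr) = \Tr\bigl((fu^{n-1})(H^t)\,W(H^t-i)^{-n}\bigr),
\]
the same value for every $k$; summing the $n$ equal contributions and bounding by $\|fu^{n-1}\|_\infty\|W(H^t-i)^{-n}\|_1$ produces \eqref{New11}.

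The hardest step will be securing $\mathcal{S}^1$-membership and $\mathcal{S}^1$-continuity of each individual summand $(fu^k)(H^t)W(H^t-i)^{-(k+1)}$ for $k<n-1$: a priori such a product sits only in $\mathcal{S}^{n/(k+1)}\supsetneq\mathcal{S}^1$, so one must harness the full decay encoded in $\mathfrak{Q}_n^1(\R)$ through the factorisation $(fu^k)(H^t)=(fu^{n-1})(H^t)(H^t-i)^{-(n-1-k)}$ combined with Lemma \ref{aprlem2}'s Schatten bounds across all exponents $p\leq n$, in order to justify the trace-cyclicity rearrangement used in the trace computation. Once this trace-class control is in place, the remaining algebra propagates through cleanly from Lemma \ref{MOIweight} and the integral representation in Theorem \ref{dfn:MOI}(1).
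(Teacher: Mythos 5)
Your overall architecture --- telescoping Lemma \ref{MOIweight}, trace-class control of each summand, then the trace identities --- is exactly the paper's, and your treatment of the MOI term, of the trace formula for the double operator integral, and of the $k$-independence of $\Tr\bigl((fu^k)(H^t)W(H^t-i)^{-(k+1)}\bigr)$ is correct. The gap is in the step you yourself flag as the hardest: the factorisation $(fu^k)(H^t)=(fu^{n-1})(H^t)(H^t-i)^{-(n-1-k)}$ does not deliver $\S^1$-membership of $(fu^k)(H^t)W(H^t-i)^{-(k+1)}$ for $k<n-1$. After the substitution the operator reads
$$(fu^{n-1})(H^t)\,(H^t-i)^{-(n-1-k)}\,W\,(H^t-i)^{-(k+1)},$$
so the resolvent powers are split on the two sides of the single factor $W$, and Lemma \ref{aprlem2} can only be applied to one side at a time: either $(H^t-i)^{-(n-1-k)}W\in\S^{n/(n-1-k)}$ with $(H^t-i)^{-(k+1)}$ merely bounded, or $W(H^t-i)^{-(k+1)}\in\S^{n/(k+1)}$ with the left resolvent merely bounded. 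Since a resolvent power by itself need not lie in any Schatten class, H\"older yields only $\S^{n/(n-1-k)}$ or $\S^{n/(k+1)}$, and neither exponent equals $1$ when $0\leq k<n-1$. Without trace-class membership, the cyclicity rearrangement in your derivation of \eqref{New11} is also unjustified.

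The repair is to pull the full weight $u^{-n}$ out of the function so that the entire $n$-th resolvent power lands adjacent to $W$ on one side. Since $fu^{2n}\in C_b(\R)$ by Definition \ref{def:function class Q_n}(i), the function $fu^{k+n}$ is bounded for every $0\leq k\leq n-1$, whence
$$(fu^k)(H^t)W=(fu^{k+n})(H^t)\,(H^t-i)^{-n}W\in\S^1$$
by Lemma \ref{aprlem2}(a) with $p=n$; multiplying on the right by the bounded operator $(H^t-i)^{-(k+1)}$ keeps each summand in $\S^1$, after which your cyclicity computation and the bound $n\|fu^{n-1}\|_\infty\|W(H^t-i)^{-n}\|_1$ go through. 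This is precisely where the full decay $fu^{2n}\in C_b(\R)$ --- all $2n$ powers, not just the $n-1$ your factorisation uses --- is needed. The remainder of your argument, including the continuity claims, is at the same level of detail as the paper's and is fine once this is fixed.
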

	\begin{proof}
		Let $f\in \mathfrak{Q}_n^1(\R)$. Then by repeated applications of Proposition~\ref{Fourierprop}, Theorem \ref{thm:cov rel bdd} and Lemma \ref{thm:well defined}, we get
		\begin{align}\label{Kr1}
			\nonumber \bT^{H^t,H^t}_{f^{[1]}}(W)=&T^{H^t,H^t}_{(fu)^{[1]}}(W(H^t-i)^{-1})-f(H^t)W(H^t-i)^{-1}\\
			\nonumber =&T^{H^t,H^t}_{(fu^2)^{[1]}}(W(H^t-i)^{-2})-(fu)(H^t)W(H^t-i)^{-2}-f(H^t)W(H^t-i)^{-1}\\
			=&T^{H^t,H^t}_{(fu^n)^{[1]}}(W(H^t-i)^{-n})-\sum_{k=0}^{n-1}(fu^k)(H^t)W(H^t-i)^{-(k+1)}.
		\end{align}
		As Proposition \ref{Fourierprop} implies $\widehat{(fu^n)^{(1)}}\in L^1(\R)$, Corollary \ref{cor:bd_moi_bdd} (namely, \eqref{fourierbound}), in particular means that $T_{(fu^n)^{(1)}}^{H^t,H^t}$ maps $\S^1$ to $\S^1$. Applying Lemma \ref{aprlem2} therefore yields
		\begin{align}\label{MOI:trclass}
			T^{H^t,H^t}_{(fu^n)^{[1]}}(W(H^t-i)^{-n})\in \mathcal{S}^1.
		\end{align}
		Since $t\mapsto (H^t-i)^{-1}$ is strongly continuous, it follows (see \cite[Theorem VIII.20]{ReedSimonI}) that $t\mapsto e^{isH^t}$ is strongly continuous. Hence, Corollary \ref{cor:bd_moi_bdd} and Lemma \ref{aprlem2} also yield $\S^1$-continuity of the above operator in $t$.
		
		Since $fu^{2n}\in C_b(\R)$ by Definition \ref{def:function class Q_n}\ref{item:def25i}, for each $0\leq k\leq n-1$, by Lemma \ref{aprlem2} we have
		\begin{align}\label{trcl}
			(fu^k)(H^t)W (H^t-i)^{-(k+1)}=(fu^{k+n})(H^t)(H^t-i)^{-n}W (H^t-i)^{-(k+1)}\in\Scal^1.
		\end{align}	
		Thus, the identity \eqref{Kr1} and the properties \eqref{MOI:trclass} and \eqref{trcl} together establish \eqref{eq:two parts}. The $\S^1$-continuity of \eqref{trcl} in $t$ follows from Lemma \ref{aprlem2} and strong continuity of $t\mapsto (fu^{k+n})(H^t)$, where the latter fact follows (see \cite[Theorem VIII.20]{ReedSimonI}) from the inclusion $fu^{k+n}\in C_b(\R)$ and the strong continuity of $t\mapsto (H^t-i)^{-1}$.
		
		By a standard DOI property of the general form ``$\Tr(T^{D,D}_{g^{[1]}}(A))=\Tr(g'(D)A)$'', which in this case can be derived from Corollary \ref{cor:bd_moi_bdd}, we have $$\Tr\left( T^{H^t,H^t}_{(fu^n)^{[1]}}(W(H^t-i)^{-n}) \right)=\Tr\left( (fu^n)^{(1)}(H^t)\,(W(H^t-i)^{-n}) \right).$$ Hence, by utilizing H\"older's inequality, we derive the estimate \eqref{Newtraceest}.

		By using \eqref{trcl} and the cyclicity of the trace we conclude
		\begin{align}\label{Kr2}
			\nonumber\Tr\big((fu^k)(H^t)W(H^t-i)^{-(k+1)}\big)=&\Tr\big((H^t-i)^{-(k+1)}(fu^k)(H^t)W\big)\\
			=&\Tr\big((fu^{n-1})(H^t)(H^t-i)^{-n}W\big).
		\end{align}
		Applying H\"older's inequality to the right-hand side of \eqref{Kr2}, 	combined with the invariance of $\|\cdot\|_1$ under adjoints, results in the estimate \eqref{New11}. This concludes the proof.
	\end{proof}
	
	We briefly recall the classical result that the first-order spectral shift function exists for perturbations of trace class. This result is due to M. G. Krein \cite{Krein}, see also \cite[Theorem 8.3.3]{Yafaev92}.
	\begin{thm}[Krein]
		\label{thm:Krein}
		Let $H$ be a self-adjoint operator in $\H$, and let $V=V^*\in\S^1$. Then there exists an integrable function $\xi$ such that for all $f\in C^1(\R)$ satisfying $\widehat{f'}\in L^1(\R)$ (slightly more generally, $\widehat{f'}$ is only assumed to be a finite complex measure) we have
		$$\Tr(f(H+V)-f(H))=\int_{-\infty}^\infty\xi(\lambda)f'(\lambda)\,d\lambda.$$
	\end{thm}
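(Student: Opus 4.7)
The strategy is the classical one going back to Krein: construct the spectral shift function $\xi$ via the perturbation determinant, verify the trace formula first on resolvents, and then extend by continuity to all $f$ with $\widehat{f'}\in L^1(\R)$.

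\textbf{Step 1 (Perturbation determinant).} For $z\in\C\setminus\R$, the operator $V(H_0-z)^{-1}$ lies in $\S^1$ since $V\in\S^1$ and $(H_0-z)^{-1}\in\bh$, so the Fredholm determinant
\[\Delta(z) := \det\bigl(I + V(H_0-z)^{-1}\bigr)\]
is well defined, holomorphic, and nowhere vanishing on $\C\setminus\R$. Normalizing by $\Delta(z)\to 1$ as $\Im z\to+\infty$, the function $L(z):=\log\Delta(z)$ is single-valued holomorphic on $\C^+$ and satisfies $L(\bar z)=\overline{L(z)}$ (since $V=V^*$ implies $\Delta(\bar z)=\overline{\Delta(z)}$).

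\textbf{Step 2 (Nevanlinna representation).} The key analytic input is the uniform bound
\[\sup_{y>0}\int_\R\bigl|\Im L(x+iy)\bigr|\,dx\leq\pi\|V\|_1.\]
Combined with the decay of $L$ at infinity, standard boundary-value theory for the Nevanlinna class yields a Poisson-type representation
\begin{equation*}
L(z)=\int_\R\frac{\xi(\lambda)}{\lambda-z}\,d\lambda,\qquad z\in\C^+,
\end{equation*}
for some $\xi\in L^1(\R)$ with $\|\xi\|_1\leq\|V\|_1$. I would prove the $L^1$ bound first for finite rank self-adjoint $V$, where $\Delta$ reduces to a finite-dimensional determinant and $\pi\xi(x)=\lim_{y\downarrow 0}\Im L(x+iy)$ is completely explicit in terms of eigenvalue interlacing of $H_0$ and $H_0+V$ on the relevant invariant subspace. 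The general trace class case follows by approximating $V$ in $\S^1$ by finite rank operators $V_n$ and extracting a limit $\xi$ from the corresponding $\xi_n$ via a Helly-type compactness argument.

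\textbf{Step 3 (From resolvents to general $f$).} Jacobi's formula gives
\[-L'(z)=\Tr\bigl((H_0+V-z)^{-1}-(H_0-z)^{-1}\bigr),\]
so differentiating the Nevanlinna representation verifies Krein's trace formula for $f_z(\lambda)=(\lambda-z)^{-1}$, $z\in\C\setminus\R$. Both sides of Krein's formula are continuous in $f$ with respect to the seminorm $\|\widehat{f'}\|_1$: the left-hand side by Theorem \ref{dfn:MOI}(2) applied with $n=1$ together with $V\in\S^1$, yielding
\[\|f(H_0+V)-f(H_0)\|_1=\|T^{H_0+V,H_0}_{f^{[1]}}(V)\|_1\leq\|\widehat{f'}\|_1\|V\|_1,\]
and the right-hand side by $\|f'\|_\infty\leq\|\widehat{f'}\|_1$ together with H\"older. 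Finite linear combinations of the $f_z$ are dense in the function class $\{f : \widehat{f'}\in L^1(\R)\}$ in this seminorm (via Fourier inversion of $\widehat{f'}$ and Riemann-sum approximation of the resulting integral), so the formula extends.

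\textbf{Main obstacle.} The genuinely nontrivial point is Step 2, and in particular the sharp bound $\|\xi\|_1\leq\|V\|_1$. The finite rank case reduces to an eigenvalue interlacing identity, but the passage to a general trace class $V$ requires care in order to guarantee that the limit object $\xi$ is an honest element of $L^1(\R)$ rather than merely a finite signed measure on $\R$.
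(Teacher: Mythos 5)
The paper does not actually prove this theorem: it is imported verbatim from the literature (Krein's original paper and \cite[Theorem 8.3.3]{Yafaev92}), so there is no in-paper argument to compare against. Your proposal is the standard classical proof via the perturbation determinant, and its architecture (determinant, Nevanlinna/Poisson representation of $\log\Delta$, verification on resolvents, extension by $\|\widehat{f'}\|_1$-continuity using the Schatten bound of Theorem \ref{dfn:MOI}) is the right one. Two points in Step 2, however, are genuinely off as written. First, for finite rank $V$ and an $H_0$ with continuous spectrum there is no ``relevant invariant subspace'' on which everything becomes finite-dimensional; the correct route is the rank-one case (where $\Delta(z)=1+\alpha\langle(H_0-z)^{-1}\phi,\phi\rangle$ is of Herglotz type, so $0\leq\Im L\leq\pi$ and the Poisson representation gives $0\leq\xi\leq1$ with $\int\xi\leq\|V\|_1$), iterated via the multiplicativity $\Delta_{H_0+V_1+V_2/H_0}=\Delta_{H_0+V_1+V_2/H_0+V_1}\,\Delta_{H_0+V_1/H_0}$. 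Second, and more seriously, the ``Helly-type compactness argument'' you propose for the passage from finite rank to trace class cannot close the gap you yourself identify: weak-$*$ compactness of measures only yields a finite signed measure in the limit, not an $L^1$ function. The standard fix is that the same multiplicativity gives $\xi_{H_0+V_n,H_0}-\xi_{H_0+V_m,H_0}=\xi_{H_0+V_n,H_0+V_m}$ and hence $\|\xi_{H_0+V_n,H_0}-\xi_{H_0+V_m,H_0}\|_{L^1}\leq\|V_n-V_m\|_1$, so the sequence is Cauchy in $L^1(\R)$ and converges to an honest integrable $\xi$. Incidentally, this is exactly the mechanism the paper reuses at the level of spectral shift functions in the proof of Theorem \ref{krmainthm}, where $\|\eta_{k,1}-\eta_{l,1}\|_{L^1((-a,a))}$ is controlled by $\alpha_{n,V_{k,l},H_0}$.

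A minor further remark on Step 3: Riemann-sum discretization of the Fourier inversion of $\widehat{f'}$ produces linear combinations of exponentials $e^{it\lambda}$, not resolvents, so it does not directly prove density of $\operatorname{span}\{(\cdot-z)^{-1}\}$ in your seminorm. Density does hold, but via Hahn--Banach and uniqueness of Laplace transforms: a functional $h\in L^\infty(\R)$ annihilating all $\widehat{f_z'}(t)=c\,t\,e^{izt}\mathbf{1}_{\pm t>0}$ must vanish a.e. With these repairs the argument is complete and is the classical one; it is entirely independent of the multiple-operator-integral machinery the paper uses for its new results, except for the continuity bound $\|f(H_0+V)-f(H_0)\|_1\leq\|\widehat{f'}\|_1\|V\|_1$, which you correctly source from Theorem \ref{dfn:MOI}.
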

	
	The following is the main result of this subsection.
	\begin{thm}\label{krmainthm}
		Let $n, H, V$ satisfy Hypothesis \ref{mainhypo}. Then there exists a locally integrable function $\eta_{1,H,V}\in L^1\left(\R, \frac{d\lambda}{(1+|\lambda|)^{n+\epsilon}}\right)$  such that
		\begin{align}\label{Kr3}
			\int_\R \frac{|\eta_{1,H,V}(\lambda)|}{(1+|\lambda|)^{n+\epsilon}}d\lambda\leq \,n\,2^n\,\big(2n+3+2(n+1)\epsilon^{-1}\big)\, \frac{1+b}{1-a}\,\max\limits_{\ell=1, \,n}\{\alpha_{n,V,H}^{\ell}\}
		\end{align}
		for all $\epsilon\in(0,1]$, where $\alpha_{n,V,H}$ is given by \eqref{kk3},
		and, moreover,
		\begin{align}\label{Kr4}
			\Tr\left( f(H+V)-f(H) \right)=\int_\R f'(\lambda)\eta_{1,H,V}(\lambda)d\lambda
		\end{align}
		for every $f\in \mathfrak{Q}_n^1(\R)$. The locally integrable function $\eta_{1,H,V}$ is determined by \eqref{Kr4} uniquely up to an additive constant.
	\end{thm}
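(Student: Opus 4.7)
The plan is to construct $\eta_{1,H_0,V}$ as the limit, modulo additive constants, of the classical Krein spectral shift functions $\xi_k=\eta_{1,H_0,V_k}$ for the finite-rank (hence trace-class) approximations $V_k$ of Lemma~\ref{aprlem1}. By Theorem~\ref{thm:Krein} each $\xi_k\in L^1(\R)$ obeys Krein's formula against every $f$ with $\widehat{f'}\in L^1(\R)$, and in particular against every $f\in\mathfrak{Q}_n^1(\R)$ (by Proposition~\ref{Fourierprop}). The essential task is to show that $\{\xi_k\}$ is Cauchy modulo constants in $L^1(\R,(1+|\lambda|)^{-n-\epsilon}d\lambda)$ with a uniform norm bound of the form \eqref{Kr3}.

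I would derive a measure representation for $\int f'\xi_k\,d\lambda$ as follows. Using Theorem~\ref{dermoi}, Theorem~\ref{Krthm0} and the trace identity~\eqref{Kr2},
\[\int_\R f'\xi_k\,d\lambda=\int_0^1\!\Tr\!\bigl((fu^n)'(H^t)V_k(H^t-i)^{-n}\bigr)dt-n\!\int_0^1\!\Tr\!\bigl((fu^{n-1})(H^t)(H^t-i)^{-n}V_k\bigr)dt,\]
with $H^t=H_0+tV_k$. Each trace equals integration against a complex Radon measure $\mu_t^{(k)}$ or ${\mu'_t}^{(k)}$ of total variation at most $M:=2^n(1+\|V\|)\max_{\ell=1,n}\{\alpha_{n,V,H_0}^\ell\}$ (Lemma~\ref{aprlem2}). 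Expanding $(fu^n)'=f'u^n+nfu^{n-1}$ and applying Lemma~\ref{lem:partial integration} to $\int fu^{n-1}\,d(\mu_t^{(k)}-{\mu'_t}^{(k)})$, which converts it to $\int f'u^{n+\epsilon}\,d\tilde\nu_t^{(k)}$ with $\|\tilde\nu_t^{(k)}\|\le 4M/\epsilon$, I would collect terms to read
\[\int_\R f'\xi_k\,d\lambda=\int_\R f'\,d\sigma_k,\qquad \sigma_k:=\int_0^1 u^n\mu_t^{(k)}\,dt+n\int_0^1 u^{n+\epsilon}\tilde\nu_t^{(k)}\,dt.\]
Although $u^n$ is unbounded, for every $g$ with $|g|\le(1+|\lambda|)^{-n-\epsilon}$ one has $\|gu^n\|_\infty,\|gu^{n+\epsilon}\|_\infty\le1$, so $|\sigma_k(g)|\le M(1+4n/\epsilon)$, and $\sigma_k$ extends to an honest Radon measure controlled by this weight.

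Since $\sigma_k$ and $\xi_k\,d\lambda$ agree on $\{f':f\in C_c^\infty(\R)\}$, their difference is a distribution annihilating every mean-zero smooth compactly supported test function, hence a constant multiple of Lebesgue measure. Thus $\sigma_k=\tilde\xi_k\,d\lambda$ with $\tilde\xi_k:=\xi_k+c_k\in L^1_{\mathrm{loc}}(\R)$ for some $c_k\in\R$, yielding the weighted $L^1$-bound
\[\int_\R\frac{|\tilde\xi_k(\lambda)|}{(1+|\lambda|)^{n+\epsilon}}\,d\lambda\le M\!\left(1+\frac{4n}{\epsilon}\right)\]
uniformly in $k$. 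Since $f\in\mathfrak{Q}_n^1(\R)$ implies $f\in C_0$ and $f'\in L^1$ (so $\int f'd\lambda=0$), the additive constant drops out of the trace formula, and $\int f'\tilde\xi_k\,d\lambda=\Tr(f(H_0+V_k)-f(H_0))$. Applying the same construction to the perturbation $V_k-V_l$ of $H_0+V_l$ yields $\tilde\xi_{kl}$ with weighted $L^1$-norm at most $M_{kl}(1+4n/\epsilon)\to0$ as $k,l\to\infty$ (via $\alpha_{n,V_k-V_l,H_0}\to0$ from Lemma~\ref{aprlem1}(ii)). Since $\tilde\xi_k-\tilde\xi_l$ and $\tilde\xi_{kl}$ obey the same trace formula against $f'$ for $f\in\mathfrak{Q}_n^1$, they differ by a constant; thus $\{\tilde\xi_k\}$ is Cauchy modulo constants in the weighted $L^1$ space, and a norm-minimizing representative of its limit is the desired $\eta_{1,H_0,V}$. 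Krein's formula for $\eta_{1,H_0,V}$ follows by letting $k\to\infty$: the left side converges since $f'(1+|\lambda|)^{n+\epsilon}\in L^\infty$ for $f\in\mathfrak{Q}_n^1(\R)$ (by Definition~\ref{def:function class Q_n}), and the right side converges by the same estimates applied to the perturbation $V-V_k$ of $H_0+V_k$. Uniqueness up to an additive constant is immediate upon testing with $f\in C_c^\infty(\R)\subset\mathfrak{Q}_n^1(\R)$.

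The main obstacle is the measure identification $\sigma_k=\tilde\xi_k\,d\lambda$: $\sigma_k$ is a formal combination of two complex measures weighted by the unbounded functions $u^n$ and $u^{n+\epsilon}$, yet it must be shown to extend to a genuine Radon measure controlled by $(1+|\lambda|)^{-n-\epsilon}$, and its density recognised as a constant shift of $\xi_k$. The crucial ingredient is Lemma~\ref{lem:partial integration}, which shifts one derivative from the undesirable $u^{n-1}$-weighted term onto $f$, promoting the weight to $u^{n+\epsilon}$ at the cost of an $\epsilon^{-1}$ factor in the measure norm --- the source of the $\epsilon^{-1}$ in the target bound~\eqref{Kr3}.
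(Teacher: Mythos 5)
Your proposal takes essentially the same route as the paper's proof: split the trace via the telescoping identity of Theorem~\ref{Krthm0}, push the unwanted $u^{n-1}$-weighted term to a $u^{n+\epsilon}$-weighted one via Lemma~\ref{lem:partial integration}, represent the resulting functional by a Radon measure, identify its density with a constant shift of the approximating Krein SSF by a distributional argument, and run a Cauchy argument through Lemma~\ref{aprlem1}(ii) and Lemma~\ref{aprlem2}. The only structural difference is the order in which the pieces are assembled: the paper constructs the measure $\nu_\epsilon$ for the full perturbation $V$ directly (pair $(0,V)$ in Theorem~\ref{Krthm0}) and separately shows the $L^1_{\mathrm{loc}}$ limit of $\{\eta_{k,1}\}$ coincides with $\nu_\epsilon$ modulo a constant, whereas you construct the measure $\sigma_k$ for each approximant $V_k$, identify $\sigma_k=(\xi_k+c_k)\,d\lambda$, and then take the weighted-$L^1$ limit. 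Both reorderings use identical ingredients and are comparably easy; yours localizes the distributional identification to each $k$, which is slightly cleaner for establishing absolute continuity, while the paper's gets the quantitative bound~\eqref{Kr3} cleanly for $V$ itself without passing to a quotient space and hence without the delicate ``norm-minimizing representative'' step. Two small points to tidy: you use the decomposition of Theorem~\ref{Krthm0} for the pair $(0,V_k)$, which is not in the theorem's stated list but is covered since $V_k$ itself satisfies Hypothesis~\ref{mainhypo} (or by applying Lemma~\ref{aprlem2}, whose list does include $(0,V_k)$); and the constant $M(1+4n/\epsilon)$ slightly undercounts --- using $\|u^{-1-\epsilon}\|_1\le 2+2\epsilon^{-1}$ from the paper you get roughly $M(1+4n+4n/\epsilon)$, which still implies \eqref{Kr3}.
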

	\begin{proof} Let $f\in \mathfrak{Q}_n^1(\R)$. Let $H_t=H+tV, t\in[0,1]$. By the fundamental theorem of calculus and using Theorem \ref{thm:higher derivatives in norm} we get the (a priori pointwise) integral
		\begin{align}\label{Kr5}
			f(H+V)-f(H) =\int_{0}^{1} \bT^{H_t,H_t}_{f^{[1]}}(V)\,dt.
		\end{align}
		
		We now apply Theorem \ref{Krthm0} to the above. By \eqref{Newtraceest}, \eqref{New11}, Lemma \ref{aprlem2}, and $\S^1$-continuity of the trace we may take the trace of the integral of \eqref{eq:two parts} and find
		\begin{align}
			&\Tr(f(H+V)-f(H))\\
			=&\Tr\left(\int_0^1T^{H_t,H_t}_{(fu^n)^{[1]}}(V(H_t-i)^{-n})\,dt\right)-\Tr\left(\int_0^1\sum_{k=0}^{n-1}(fu^k)(H_t)V(H_t-i)^{-(k+1)}dt\right)\\
			=&\alpha_1\big((fu^n)^{(1)}\big)+\alpha_2\big(fu^{n-1}\big)
		\end{align}
		for certain linear functionals $\alpha_1,\alpha_2$ defined on suitable subspaces of $C_0(\R)$, and all $f\in\Q_n^1(\R)$. The above integrands are $\S^1$-continuous by Theorem \ref{Krthm0}, which shows that the above expressions are well defined and we may interchange trace with integral everywhere.

		From \eqref{Newtraceest} and \eqref{New11} it follows that $\alpha_1$ and $\alpha_2$ are continuous in the supremum norm, and hence we may extend $\alpha_1$ and $\alpha_2$ to continuous functionals on $C_0(\R)$. By the Riesz--Markov representation theorem, there exist two complex Borel measures $\mu_1,\mu_2$ such that

		\begin{align}\label{Kr7}
			\nonumber\Tr \left(f(H+V)-f(H)\right)=&\int_{\R} (fu^n)^{(1)}(\lambda)d\mu_1(\lambda)+\int_{\R} (fu^{n-1})(\lambda)d\mu_2(\lambda)\\
			=&\int_{\R} \Big(f^{(1)}(\lambda)u^n(\lambda)+nf(\lambda)u^{n-1}(\lambda)\Big)d\mu_1(\lambda)+\int_{\R} (fu^{n-1})(\lambda)d\mu_2(\lambda),
		\end{align}
		and with total variation bounded by \eqref{Newtraceest}, \eqref{New11} and Lemma \ref{aprlem2}\ref{estb} as
		\begin{align}\label{measurebound}
			\nonumber\|\mu_1\|,\|\mu_2\|&\leq n\|V(H_t-i)^{-n}\|_1\\
			&\leq n\,2^n\,\frac{1+b}{1-a}\,\max\limits_{\ell=1, \,n}\{\alpha_{n,V,H}^{\ell}\}.
		\end{align}
		For $\epsilon\in(0,1]$, applying Lemma \ref{lem:partial integration} to \eqref{Kr7} ensures the existence of a measure $\nu_{\epsilon}$ such that
		\begin{align}\label{Kr8}
			\Tr \left(f(H+V)-f(H)\right)=\int_{\R}f^{(1)}(\lambda)\,d\nu_\epsilon(\lambda)\qquad(f\in\Q_n^1(\R))
		\end{align}
		with 
		\begin{align*}
			\int_\R |u^{-n-\epsilon}(\lambda)|d|\nu_\epsilon|(\lambda)\leq\|\mu_1\|+\|u^{-1-\epsilon}\|_1(n\|\mu_1\|+\|\mu_2\|).
		\end{align*}
		As $\frac{1}{1+|\lambda|}\leq |u(\lambda)^{-1}|$, this further implies that
		\begin{align}\label{Kr9}
			\nonumber\int_\R \frac{d|\nu_\epsilon|(\lambda)}{\left(1+|\lambda|\right)^{n+\epsilon}}&\leq\|\mu_1\|+\|u^{-1-\epsilon}\|_1(n\|\mu_1\|+\|\mu_2\|)\\
			&\leq n\,2^n \,\big(2n+3+2(n+1)\epsilon^{-1}\big)\, \frac{1+b}{1-a}\,\max\limits_{\ell=1, \,n}\{\alpha_{n,V,H}^{\ell}\},
		\end{align}
		where we have estimated $\|u^{-1-\epsilon}\|_1=2\int_0^1 |u^{-1-\epsilon}|+2\int_1^\infty|u^{-1-\epsilon}|\leq2+2\epsilon^{-1}$.
		
		Next, we will demonstrate that the measure $\nu_{\epsilon}$ in \eqref{Kr8} is absolutely continuous with respect to the Lebesgue measure. Let $\{V_k\}_{k\in \N}$ be the sequence given in Lemma \ref{aprlem1}. Therefore, by Theorem \ref{thm:Krein}, there exists a sequence of integrable functions $\{\eta_{k,1}\}_{k\in \N}$ on $\R$ such that, for all $f\in C_c^\infty(\R)$,
		\begin{align}\label{eq:Krein V_k}
			\Tr \left(f(H+V_k)-f(H)\right)=\int_{\R}f^{(1)}(\lambda)\,\eta_{k,1}(\lambda)\,d\lambda.
		\end{align}
		If $V_{k,l}:=V_k-V_l$, then from \eqref{eq:Krein V_k}, subsequently applying the fundamental theorem of calculus for MOIs (see \cite[sentence below (42)]{vNS22}), and Theorem \ref{Krthm0}, we gather
		\begin{align}\label{Kr10}
			&\nonumber\left|\int_\R f^{(1)}(\lambda)(\eta_{k,1}-\eta_{l,1})(\lambda)\,d\lambda\right|\\
			\nonumber=&|\Tr\left(f(H+V_k)-f(H+V_l)\right)|\\
			\nonumber=&\left|\int_{0}^{1} \Tr\left(\bT^{H+V_l+tV_{k,l},H+V_l+tV_{k,l}}_{f^{[1]}}(V_{k,l})\right)\,dt\right|\\
			\leq &\left(\|(fu^n)^{(1)}\|_\infty+ n\|fu^{n-1}\|_\infty\right)\,\int_{0}^{1}~dt~\|V_{k,l}(H+V_l+tV_{k,l}-i)^{-n}\|_1.
		\end{align}
		Thanks to Lemma \ref{aprlem2}, from \eqref{Kr10} we obtain
		\begin{align}\label{Kr11}
			&\nonumber\left|\int_\R f^{(1)}(\lambda)(\eta_{k,1}-\eta_{l,1})(\lambda)\,d\lambda\right|\\
			\leq &\left(\|(fu^n)^{(1)}\|_\infty+ n\|fu^{n-1}\|_\infty\right)\,
			2^n\,\frac{1+b}{1-a}\,\alpha_{n,V_{k,l},H}\,\max\limits_{p=0,\, n-1}\{\alpha_{n,V,H}^{p}\}.
		\end{align}
		Let $f\in C_c^\infty((-d,d))$ for some fixed $d>0$. Then, by using the standard estimates $\|gu^k\|_\infty\leq\|g\|_\infty(1+d)^k$ (for $g\in C_c^\infty((-d,d))$ and $\|f\|_\infty\leq 2d\|f'\|_\infty$, from \eqref{Kr11} we get
		\begin{align}\label{Kr12}
			&\left|\int_\R f^{(1)}(\lambda)(\eta_{k,1}-\eta_{l,1})(\lambda)\,d\lambda\right|\\
			\leq \nonumber&\left( 1+d+4nd\right)(1+d)^{n-1}\,\|f^{(1)}\|_\infty\,
			2^n\,\frac{1+b}{1-a}\,\alpha_{n,V_{k,l},H}\,\max\limits_{p=0,\, n-1}\{\alpha_{n,V,H}^{p}\}.
		\end{align}
		Therefore, subsequently using \eqref{Kr12} and Lemma \ref{aprlem1}\ref{item:aprlem1iii} implies that
		\begin{align*}
			\nonumber	\|\eta_{k,1}-\eta_{l,1}\|_{L^1((-a,a))}=&\sup_{\|f^{(1)}\|_\infty\leq 1}\left|\int_\R f^{(1)}(\lambda)(\eta_{k,1}-\eta_{l,1})(\lambda)\,d\lambda\right|\\
			\nonumber	\leq &\left(1+d+4nd\right)(1+d)^{n-1}\,
			2^n\, \frac{1+b}{1-a}\,\alpha_{n,V_{k,l},H}\,\max\limits_{p=0,\,n-1}\{\alpha_{n,V,H}^{p}\}\\
			&\longrightarrow 0 \text{ as } k,l \to\infty.
		\end{align*}
		Thus $\{\eta_{k,1}\}_{k\in\N}$ is a Cauchy sequence in $L^1_{loc}(\R)$; we let $\eta_1$ be its $L^1_{loc}(\R)$-limit. By a calculation completely analogous to \eqref{Kr10}-\eqref{Kr11}, we conclude for $f\in C_c^{\infty}(\R)$ that
		\begin{align}\label{Kr13}
			\nonumber\Tr \left(f(H+V)-f(H)\right)=&\lim\limits_{k\to\infty}\Tr\left(f(H+V_k)-f(H)\right)\\
			\nonumber=&\lim\limits_{k\to\infty} \int_{\R}f^{(1)}(\lambda)\,\eta_{k,1}(\lambda)\,d\lambda\\
			=&\int_{\R}f^{(1)}(\lambda)\,\eta_{1}(\lambda)\,d\lambda.
		\end{align} 
		Now combining  \eqref{Kr8} and \eqref{Kr13} we have the following equality
		\begin{align}\label{Kr14}
			\int_{\R}f^{(1)}(\lambda)\,d\nu_{\epsilon}(\lambda)=\int_{\R}f^{(1)}(\lambda)\,\eta_{1}(\lambda)\,d\lambda\quad \text{for all} \quad f\in C_c^{\infty}(\R).
		\end{align}
		Let $d\mu(\lambda)=\eta_{1}(\lambda)-d\nu_{\epsilon}(\lambda)$. Then, it follows from \eqref{Kr14} that
		\begin{align}\label{Kr15}
			\int_{a}^{b}f^{(1)}(\lambda)d\mu(\lambda)=0\quad \text{for all }f\in C_c^\infty(\R).
		\end{align}
		Now consider the distribution $T_{\mu}$ defined by 	
		$$T_{\mu}(\phi):=\int_{a}^{b} \phi\, d\mu(\lambda)$$ for every $\phi\in C^\infty_c(\R)$.
		By \eqref{Kr15} and the definition of the derivative of a distribution, $T_{\mu}^{(1)}=0$. Hence by \cite[Theorem 3.10 and Example 2.21]{gwaiz}, $d\mu(\lambda)=c\,d\lambda$ for some constant $c$. Consequently, $\eta_{1}\in L^1_{loc}(\R)$ satisfying \eqref{Kr13} is unique up to an additive constant. In particular, we can assume $d\nu_{\epsilon}(\lambda)=\eta_{1}(\lambda)d\lambda$, and obtain
		\begin{align*}
			\int_{\R}\frac{|\eta_{1}(\lambda)|}{(1+|\lambda|)^{n+\epsilon}}d\lambda= \int_{\R}\frac{d|\nu_{\epsilon}|(\lambda)}{(1+|\lambda|)^{n+\epsilon}}.
		\end{align*}
		By setting   $\eta_{1,H,V}=\eta_{1}$, the above equality together with \eqref{Kr8}-\eqref{Kr9} completes the proof.
	\end{proof}
	
	\subsection{Higher order spectral shift functions}\label{sct:Higher order}
	The aim of this section is to provide all higher order spectral shift functions corresponding to a pair of self adjoint operators $(H,V)$ satisfying the Hypothesis~\ref{mainhypo}. 
	
	The following lemma is essential to reach our goal.
	\begin{lem}\label{divexpthm}
		Let $n,H,V$ satisfy Hypothesis \ref{mainhypo}. Write $\tilde{V}=V(H-i)^{-1}$. Then for $1\leq k\leq n-1$, and $f\in \mathfrak{Q}_n^k(\R)$, 
		\begin{align}\label{Hg4}
			&\bT_{f^{[k]}}^{H,\ldots,H}(V,\ldots,V)\\
			\nonumber=&\sum_{l=0}^k\sum_{\substack{j_1+\cdots+j_{l+1}=k\\ j_1,\ldots,j_l\geq1,j_{l+1}\geq0}}\sum_{r=0}^{\min(n-k,l)}(-1)^{k-l+r}
			\sum_{\substack{p_0\geq0,p_1,\ldots,p_r\geq1\\ p_0+\cdots+p_r=n-k}} T_{(fu^{n-k+l-r})^{[l-r]}}^{H,\ldots,H}(\tilde V^{j_1},\ldots,\tilde V^{j_{l-r}})\\
			&\nonumber\times(H-i)^{-p_0}\tilde V^{j_{l-r+1}}\cdots (H-i)^{-p_r}\tilde V^{j_{l+1}}.
		\end{align}
		We recall that by standard convention, a product $B_1\cdots B_m$ for $m<1$ is $1$, a list $(B_1,\ldots,B_{m})$ for $m<1$ is the empty list $()$, and, by definition, $T^{H}_{g^{[0]}}()=g(H)$.
	\end{lem}
	\begin{proof}
		By Theorem \ref{thm:bT in MOIs and bound}, it follows that for any $k\geq 1$, and $f\in\Q_n^k(\R)$,
		\begin{align}\label{Hg5}
			\bT_{f^{[k]}}^{H,\ldots,H}(V,\ldots,V)
			=\sum_{l=0}^{k}\sum_{\substack{j_1+\cdots+j_{l+1}=k\\j_1,\ldots,j_{l}\geq 1, j_{l+1}\geq 0}} (-1)^{k-l} T_{(fu^l)^{[l]}}^{H,\ldots,H}\big(\tilde{V}^{j_1},\ldots,\tilde{V}^{j_{l}}\big)\tilde{V}^{j_{l+1}}.
		\end{align}
		We may now apply \cite[Lemma 9]{vNvS21a} to each of the terms above, for $s=n-k$. Indeed, for all $l\in\{0,\ldots,k\}$, from Proposition \ref{Fourierprop} it follows that $fu^l$ is in the desired function class $\mathcal{W}^{n-k,l}$, in the notation of \cite[Section 3]{vNvS21a}.  
		The lemma follows immediately.
	\end{proof}

	The following theorem provides the existence of measures which are necessary to obtain the spectral shift functions of any order in the main theorem of this section.
	\begin{thm}\label{measure-existance}
		Let $n,H,V$ satisfy Hypothesis \ref{mainhypo}. Let $\epsilon\in (0,1]$. Then for each $k\in\N$,
		\begin{align*}
			\bT_{f^{[k]}}^{H,\ldots,H}(V,\ldots,V)\in\S^1,
		\end{align*}
		and
		there exists a complex Borel measure $\mu_{k,\epsilon}$ such that 
		\begin{align}\label{existance: higher order measures1}
			\Tr\left(\bT_{f^{[k]}}^{H,\ldots,H}(V,\ldots,V)\right)=\int_{\R}f^{(k)}(\lambda)u^{k\vee n+\epsilon}(\lambda)d\mu_{k,\epsilon}(\lambda)
		\end{align}	
		for every $f\in \mathfrak{Q}_n^k(\R)$, with 
		\begin{align}\label{existance: higher order measures2}
			\|\mu_{k,\epsilon}\|\leq c_{n,k,\epsilon}  \,\alpha_{n,V,H}^{k},
		\end{align}
		where $c_{n,k,\epsilon}$ is some positive constant, and $	\alpha_{n,V,H}$ is given by \eqref{kk3}.
	\end{thm}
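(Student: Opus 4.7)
The plan is to decompose $T_{f^{[k]}}^{H_0,\ldots,H_0}(V,\ldots,V)$ using Lemma \ref{divexpthm} (for $2\leq k\leq n-1$) or equation \eqref{Hg5} (for $k\geq n$) into a finite sum of trace-class operators, take the trace of each, and use Riesz--Markov together with Lemma \ref{lem:partial integration} to arrive at the integral representation with weight $u^{k\vee n+\epsilon}$.

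I would focus first on the main case $2\leq k\leq n-1$. Lemma \ref{divexpthm} rewrites the MOI as a finite sum of summands of the form
\[T_{(fu^{n-k+l-r})^{[l-r]}}^{H_0,\ldots,H_0}(\tilde V^{j_1},\ldots,\tilde V^{j_{l-r}})\cdot(H_0-i)^{-p_0}\tilde V^{j_{l-r+1}}\cdots(H_0-i)^{-p_r}\tilde V^{j_{l+1}}.\]
Each summand contains in total $k$ copies of $V$ and $n$ copies of $(H_0-i)^{-1}$. Setting $q:=j_1+\cdots+j_{l-r}$, Theorem \ref{inv-thm} places the MOI factor in $\Scal^{n/q}$ with norm at most $c\,\|(fu^{n-k+l-r})^{(l-r)}\|_\infty\,\alpha_{n,V,H_0}^q$ (valid because $j_i\leq k\leq n-1<n$ so all relevant Schatten indices are in $(1,\infty)$). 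A Hölder decomposition of the trailing product, possibly combined with a further use of Lemma \ref{MOIweight} to absorb the leading $(H_0-i)^{-p_0}$ into the MOI when $p_0\geq 1$, places the full summand in $\Scal^1$ with trace bounded by $C_{n,k}\,\|(fu^{n-k+l-r})^{(l-r)}\|_\infty\,\alpha_{n,V,H_0}^k$.

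Because divided differences of order $m$ depend only on the $m$-th derivative, the trace of each summand is a linear functional of $h:=(fu^{n-k+l-r})^{(l-r)}$, defined on $\{h\in C_0(\R):\hat h\in L^1(\R)\}$ and continuous in the sup-norm. Extending by density to $C_0(\R)$, Riesz--Markov yields a complex Borel measure $\nu_{l,r,j,p}$ of total variation $\leq C_{n,k}\,\alpha_{n,V,H_0}^k$ representing the trace as $\int_\R(fu^{n-k+l-r})^{(l-r)}\,d\nu_{l,r,j,p}$. Expanding by Leibniz, $(fu^{n-k+l-r})^{(l-r)}=\sum_{j=0}^{l-r}c_j\,f^{(j)}u^{n-k+j}$, and for each $j$ I would apply Lemma \ref{lem:partial integration} with parameters $(n_L,m_L,k_L)=(j,n-k+j,k-j)$ and the given $\epsilon$ to convert $\int f^{(j)}u^{n-k+j}\,d\nu_{l,r,j,p}$ into $\int f^{(k)}u^{n+\epsilon}\,d\tilde\nu$ with $\|\tilde\nu\|\leq\|u^{-1-\epsilon}\|_1\,\|\nu_{l,r,j,p}\|$. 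The required intermediate $C_0$-conditions on $f^{(j')}u^{n-k+j'}$ for $j'\leq k-1$ follow from Definition \ref{def:function class Q_n}(ii) together with $|u|^{-k-1}\to0$, and the $L^1$-condition $f^{(k)}u^{n-1}\in L^1(\R)$ follows from $\|f^{(k)}u^{n+k+1}\|_\infty<\infty$ and the integrability of $|u|^{-k-2}$ (valid for $k\geq2$). Summing all contributions defines $\mu_{k,\epsilon}$ and establishes \eqref{existance: higher order measures1}--\eqref{existance: higher order measures2}, noting $k\vee n=n$ in this range.

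The case $k\geq n$ is treated analogously, with \eqref{Hg5} replacing Lemma \ref{divexpthm}. Each summand then lies in $\Scal^{n/k}\subseteq\Scal^1$, and where Theorem \ref{inv-thm} fails (because some Schatten index is $\leq1$) one substitutes Theorem \ref{dfn:MOI}(2) together with $\widehat{(fu^l)^{(l)}}\in L^1(\R)$ from Proposition \ref{Fourierprop}; the subsequent Riesz--Markov and Lemma \ref{lem:partial integration} steps are parallel and deliver the weight $u^{k+\epsilon}=u^{k\vee n+\epsilon}$. The main obstacle throughout is the Schatten-index bookkeeping: verifying that each summand in Lemma \ref{divexpthm} is trace class with a bound controlled by a single sup-norm (since naive Hölder on the trailing product can give a weaker $\Scal^{n/(n-p_0)}$ estimate when the leading resolvent $(H_0-i)^{-p_0}$ is not absorbed into an adjacent $V$), and simultaneously checking the $C_0$- and $L^1$-hypotheses of Lemma \ref{lem:partial integration} for every Leibniz-expanded integrand.
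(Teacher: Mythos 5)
Your route is the paper's route: decompose via Lemma \ref{divexpthm} for $2\leq k\leq n-1$ (resp.\ \eqref{Hg5} for $k\geq n$), show each summand is trace class with a trace bound $C_{n,k}\,\|(fu^{\cdot})^{(l-r)}\|_\infty\,\alpha_{n,V,H_0}^k$, apply Riesz--Markov to each resulting sup-norm-continuous functional, Leibniz-expand, and finish with Lemma \ref{lem:partial integration} to produce the single weight $u^{k\vee n+\epsilon}$. That part is fine and matches the paper, including your observation that Theorem \ref{inv-thm} applies to the MOI factor because $j_1+\cdots+j_{l-r}\leq k\leq n-1$ keeps the output index $n/q$ strictly above $1$.

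The one step you flag as ``the main obstacle'' is genuinely the crux, and your proposed fix does not close it. The problematic summand is the one with $r=0$ and $j_{l+1}=0$, where the trailing product degenerates to the bare resolvent $(H_0-i)^{-(n-k)}$: this operator is bounded but not compact, so H\"older gives only $\mathcal S^{n/k}$ (with $k<n$), not $\mathcal S^1$, and there is no adjacent $V$ to absorb it into. Lemma \ref{MOIweight} cannot be used to push it \emph{into} the MOI: as stated, that lemma goes the other way (it strips a resolvent off the last MOI argument, raising the order of the divided difference and producing an \emph{external} product $T_{f^{[n-1]}}(\ldots)V_n(H_n-i)^{-1}$); running it in reverse would require treating $(H_0-i)^{-(n-k-1)}$ as an MOI argument, for which you have no Schatten bound. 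What saves the day in the paper is that all spectral arguments of the MOI are the same operator $H_0$, so the resolvent commutes inside the operator integral as a multiplication of the symbol by $u^{-(n-k)}$ in the last variable, after which \cite[Lemma 2.9(i)]{NuSkJOT} supplies the trace-class membership and the sup-norm trace bound. Without this (or an equivalent substitute), the $r=0$, $j_{l+1}=0$ term is not controlled and the Riesz--Markov step cannot be carried out for it. A smaller remark on your $k\geq n$ case: replacing Theorem \ref{inv-thm} by Theorem \ref{dfn:MOI}(2) does give $\mathcal S^1$-membership, but the resulting bound is in $\|\widehat{(fu^l)^{(l)}}\|_1$ rather than a sup-norm, which is not directly the continuity needed for Riesz--Markov; one still needs a sup-norm trace estimate (e.g.\ the $\alpha=1$ endpoint results underlying Theorem \ref{inv-thm}) for those terms.
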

	\begin{proof}
		
		For $1\leq k<n$: By Lemma \ref{divexpthm}, we have the identity \eqref{Hg4}. Next we show that each term in the right hand side of \eqref{Hg4} is a trace class operator and hence we can estimate the trace of those terms. Assuming the notations involved in \eqref{Hg4}, we note the following. 
		\begin{enumerate}[label=\textnormal{(\roman*)}]
			\item\label{Obs1} Recalling that $T_{g^{[0]}}^{H}()=g(H)$, we notice that the summands corresponding to $r=l$ are of the form of (a minus sign times)
			$$(fu^{n-k})(H)(H-i)^{-p_0}\tilde{V}^{j_{1}}\cdots(H-i)^{-p_l}\tilde{V}^{j_{l+1}}.$$
			Employing a similar argument as given for \eqref{trcl}, $fu^{2n}\in C_b(\R)$ implies that 
			$$ (fu^{n-k})(H)\tilde{V}^{j_{1}}\cdots(H-i)^{-p_l}\tilde{V}^{j_{l+1}}\in\S^1,$$
			and using the cyclicity property of the trace along with H\"older's inequality for Schatten norms, this yields 
			\begin{align}\label{Hg7}
				\nonumber&\left|\Tr\left(\big(fu^{n-k}\big)(H)(H-i)^{-p_0}\tilde{V}^{j_{1}}(H-i)^{-p_1}\cdots \tilde{V}^{j_{l}}(H-i)^{-p_{l}}\tilde{V}^{j_{l+1}}\right)\right|\\
				\nonumber=&\left|\Tr\left(\big(fu^{n-k}\big)(H)\tilde{V}^{j_{1}}(H-i)^{-p_1}\cdots \tilde{V}^{j_{l}}(H-i)^{-p_{l}}\tilde{V}^{j_{l+1}}(H-i)^{-p_{0}}\right)\right|\\
				\leq& 
				\,\|fu^{n-k}\|_\infty\,\alpha_{n,V,H}^k,
			\end{align}
			where $\alpha_{n,V,H}$ is given by \eqref{kk3}.
			
			\item\label{Obs2} For the rest of the terms in the summation of right hand side of \eqref{Hg4}: Now Proposition \ref{Fourierprop}\ref{item:func-2}, Corollary \ref{cor:bd_moi_bdd}, and H\"older's inequality for Schatten norms together imply
			\begin{align*}
				T_{\left(fu^{n-k+l-r}\right)^{[l-r]}}^{H,\ldots,H}\big(\tilde{V}^{j_1},\ldots,\tilde{V}^{j_{l-r}}\big)(H-i)^{-p_1}\tilde{V}^{j_{l-r+1}}\cdots (H-i)^{-p_{r+1}}\tilde{V}^{j_{l+1}}\in\S^1.
			\end{align*}
			If the product on the right of the multiple operator integral is nontrivial (either $r>0$ or $j_{l+1}>0$), then by applying H\"older's inequality for Schatten norms and using Theorem~\ref{inv-thm} we obtain		
			\begin{align}\label{Hg8}
				\nonumber&\left|\Tr\left(T_{\left(fu^{n-k+l-r}\right)^{[l-r]}}^{H,\ldots,H}\big(\tilde{V}^{j_1},\ldots,\tilde{V}^{j_{l-r}}\big)(H-i)^{-p_1}\tilde{V}^{j_{l-r+1}}\cdots (H-i)^{-p_{r+1}}\tilde{V}^{j_{l+1}}\right)\right|\\
				\leq &~d_{n,k} \, \|(fu^{n-k+l-r})^{(l-r)}\|_\infty\,\alpha_{n,V,H}^{k},
			\end{align}
			where $ d_{n,k}$ is a constant depending only on $n$ and $k$.
			
			The case where $r=0$ and $j_{l+1}=0$ takes a bit more care, because in the corresponding term $\Tr(T^{H,\ldots,H}_{(fu^{n-k+l})^{[l]}}(\tilde V^{j_1},\ldots,\tilde V^{j_l})(H-i)^{-(n-k)})$ the resolvent is not necessarily summable, and Theorem \ref{inv-thm} is not applicable for $\alpha=1$. Luckily, this case is similarly covered by \cite[Lemma 2.9(i)]{vNS23} after commuting the resolvent inside the multiple operator integral.
		\end{enumerate}
		
		\smallskip
		
		\noindent Thus, \eqref{Hg4} together with \ref{Obs1} and \ref{Obs2} implies 
		\begin{align*}
			\bT_{f^{[k]}}^{H,\ldots,H}(V,\ldots,V)\in\S^1.
		\end{align*}
		Having noted the remarks above, we apply the estimates \eqref{Hg7} and \eqref{Hg8}, the Hahn-Banach theorem, and the Riesz--Markov representation theorem, and conclude that there exist complex Borel measures $\mu_{n,k,l,r}$, $0\leq l\leq k$, $0\leq r \leq \min\{n-k,l\}$ such that 
		\begin{align}\label{eq:bound mu_{n,k,l,r}}
			\|\mu_{n,k,l,r}\|\leq  \tilde d_{n,k}\, \alpha_{n,V,H}^{k},
		\end{align}
		where $\tilde d_{n,k}$ is a constant depending only on $n$ and $k$, and
		\begin{align}\label{Hgg}
			\nonumber&\Tr\left(T_{f^{[k]}}^{H,\ldots,H}(V,\ldots,V)\right)\\
			\nonumber=& \sum_{l=0}^{k}\sum_{r=0}^{\min\{n-k,l\}}\int_\R \left(fu^{n-k+l-r}\right)^{(l-r)}(\lambda)\,d\mu_{n,k,l,r}(\lambda)\\
			\nonumber=& \sum_{l=0}^{k}\sum_{r=0}^{\min\{n-k,l\}}\sum_{s=0}^{l-r}
			\vect{l-r}{s}\int_\R f^{(s)}(\lambda)\,\left(u^{n-k+l-r}\right)^{(l-r-s)}(\lambda)\,d\mu_{n,k,l,r}(\lambda)\\
			=&\sum_{\substack{0\leq l\leq k\\0\leq r\leq \min\{n-k,l\}\\ 0\leq s\leq l-r}}
			\vect{l-r}{s}\frac{(n-k+l-r)!}{(n-k+s)!}\int_\R f^{(s)}(\lambda)\,u^{n-k+s}(\lambda)\,d\mu_{n,k,l,r}(\lambda).
		\end{align}
		
		We note that $s\leq l\leq k$.
		\noindent Thanks to Lemma \ref{lem:partial integration} and Equations \eqref{eq:bound mu_{n,k,l,r}} and \eqref{Hgg}, there exists a measure $\mu_{k,\epsilon}$ that satisfies \eqref{existance: higher order measures1} and \eqref{existance: higher order measures2}, which completes the proof for $k<n$.
		
		\smallskip
		
		\noindent For $k\geq n$: Thanks to Equation \eqref{Hg5}, Theorem \ref{inv-thm}, and Hölder's inequality, we can argue similarly to the case $k<n$ and conclude the existence of a measure $\mu_{k,\epsilon}$ satisfying \eqref{existance: higher order measures1} and \eqref{existance: higher order measures2}.
		This completes the proof.
	\end{proof}
	We proceed to present and demonstrate the central theorem of this section.  In this theorem and its proof, the Taylor remainder is denoted
	$$\mathcal R_{k,f,H}(V):=f(H+V)-\sum_{m=0}^{k-1}\frac{d^m}{dt^m}f(H+tV)\big|_{t=0}.$$

	\begin{thm}
		Let $n, H, V$ satisfy Hypothesis \ref{mainhypo}. Then for each fixed $k\in \N$, there exists a locally integrable real-valued function $\eta_k:=\eta_{k,H,V}$ such that 
		\begin{align}\label{Hg9}
			\int_\R \frac{|\eta_{k}(\lambda)|}{(1+|\lambda|)^{ n+k+\epsilon}}d\lambda\leq C_{n,k,\epsilon}\,\frac{1+b}{1-a}\,\max\limits_{\ell=1, \,k\vee n}\{\alpha_{n,V,H}^{\ell}\} 
		\end{align}	
		for all $\epsilon\in(0,1]$, where $C_{n,k,\epsilon}$ are some positive constants depending only on $n,k, \epsilon$, and $\alpha_{n,V,H}$ is given by \eqref{kk3}, and, moreover,
		\begin{align}\label{Hg10}
			\Tr\left(\mathcal{R}_{k,f,H}(V)\right)=\int_{\R}f^{(k)}(\lambda)\eta_{k}(\lambda)d\lambda
		\end{align}
		for each $f\in \mathfrak{Q}_n^k(\R) $. The locally integrable function $\eta_{k}$ is determined by \eqref{Hg10} uniquely up to a polynomial summand of degree at most $k-1$.
	\end{thm}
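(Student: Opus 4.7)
The plan is to proceed by induction on $k$, with base case $k=1$ being Theorem~\ref{krmainthm}, which provides $\eta_{1,H_0,V}\in L^1_{loc}(\R)$ satisfying the bound~\eqref{Kr3} and the trace formula~\eqref{Kr4}. For the inductive step, assume $\eta_{k-1}$ has been constructed. Theorem~\ref{dermoi} together with Taylor's theorem gives the operator identity
\begin{align*}
    \mathcal{R}_{k,f,H_0}(V)=\mathcal{R}_{k-1,f,H_0}(V)-T_{f^{[k-1]}}^{H_0,\ldots,H_0}(V,\ldots,V),
\end{align*}
so after taking traces the task reduces to representing $\Tr(T_{f^{[k-1]}}^{H_0,\ldots,H_0}(V,\ldots,V))$ in a measure form compatible with the induction hypothesis, and then converting the resulting integral against $f^{(k-1)}$ into one against $f^{(k)}$.

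For $k\geq 3$, Theorem~\ref{measure-existance} with parameter $k-1\geq 2$ represents the MOI trace as $\int_\R f^{(k-1)}u^{(k-1)\vee n+\epsilon}\,d\mu_{k-1,\epsilon}$ for a finite $\mu_{k-1,\epsilon}$ controlled by~\eqref{existance: higher order measures2}, while the induction hypothesis rewrites $\int_\R f^{(k-1)}\eta_{k-1}\,d\lambda$ as $\int_\R f^{(k-1)}u^{n+k-1+\epsilon}\,d\tau_\epsilon$ with $\tau_\epsilon$ finite and bounded by the $k-1$ version of~\eqref{Hg9}. For $k=2$, the role of Theorem~\ref{measure-existance} is played instead by the Riesz--Markov step inside the proof of Theorem~\ref{krmainthm}, which represents $\Tr(T_{f^{[1]}}^{H_0,H_0}(V))$ as $\int(fu^n)'d\mu_1+\int(fu^{n-1})d\mu_2$. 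Combining the two measure expressions produces
\begin{align*}
    \Tr(\mathcal{R}_{k,f,H_0}(V))=\int_\R f^{(k-1)}(\lambda)\,u^{m}(\lambda)\,d\rho(\lambda)
\end{align*}
with finite $\rho$ and $m\leq n+k-1+\epsilon$. A single application of Lemma~\ref{lem:partial integration} with step $1$ and parameter $\delta\in(0,1]$ then converts this into $\int_\R f^{(k)}u^{m+1+\delta}d\tilde\rho_{\delta}$, valid for all $f\in\Q_n^k(\R)$; the required decay of the various $f^{(j)}u^{s}$ is precisely what Definition~\ref{def:function class Q_n} and Proposition~\ref{Fourierprop} encode.

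To upgrade this measure representation to a locally integrable density, approximate $V$ by the finite-rank sequence $\{V_l\}$ of Lemma~\ref{aprlem1}. Each $V_l\in\S^1$, so the classical theorem of~\cite{PSS} provides $\eta_{k,V_l}\in L^1(\R)$ with $\Tr(\mathcal{R}_{k,f,H_0}(V_l))=\int_\R f^{(k)}\eta_{k,V_l}\,d\lambda$ for $f\in C_c^\infty(\R)$. Writing $\mathcal{R}_{k,f,H_0}(V_l)-\mathcal{R}_{k,f,H_0}(V_{l'})$ as a line integral over $t\in[0,1]$ whose integrand is a MOI along the path $H_0+V_{l'}+t(V_l-V_{l'})$, and bounding each MOI trace by the measure estimates just derived together with the uniform Schatten bounds of Lemma~\ref{aprlem2}, yields an inequality of the form $\|\eta_{k,V_l}-\eta_{k,V_{l'}}\|_{L^1((-a,a))}\leq C_{n,k,a}\,\alpha_{n,V_l-V_{l'},H_0}\cdot(\cdots)$. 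By Lemma~\ref{aprlem1}(ii) this is Cauchy in $L^1_{loc}(\R)$; define $\eta_k$ as its limit. Passing to the limit gives \eqref{Hg10} for $f\in C_c^\infty(\R)$, and this extends to all $f\in\Q_n^k(\R)$ using Theorem~\ref{dermoi} and the $\S^1$-continuity built into Theorem~\ref{Krthm0}.

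Both uniqueness up to a polynomial summand of degree $\leq k-1$ and the identification of $\eta_k$ with the density of $u^{m+1+\delta}d\tilde\rho_\delta$ follow from the distributional fact that $T^{(k)}=0$ on $\R$ forces $T$ to be a polynomial of degree at most $k-1$; this is applied to $\eta_k\,d\lambda-u^{m+1+\delta}d\tilde\rho_\delta$ tested against $C_c^\infty(\R)\subset\Q_n^k(\R)$. The bound~\eqref{Hg9} then follows by tracking the total variation of $\rho$, and hence of $\tilde\rho_\delta$, back through Lemma~\ref{lem:partial integration} and combining it with~\eqref{existance: higher order measures2}, the induction hypothesis, and the $(1+\|V\|)$-factors produced by Lemma~\ref{aprlem2}. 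I expect the principal obstacle to be the weight bookkeeping: the induction weight $u^{n+k-1+\epsilon}$, the Theorem~\ref{measure-existance} weight $u^{(k-1)\vee n+\epsilon}$, and the Lemma~\ref{lem:partial integration} increment $u^{1+\delta}$ must be aligned (using the boundedness of $u^{-j}$ for $j\geq 0$) so that the final effective weight collapses to $u^{n+k+\epsilon}$ as demanded by~\eqref{Hg9} for some allowable $\epsilon$. A secondary subtlety is the $k=2$ case, for which Theorem~\ref{measure-existance} is unavailable at parameter $k-1=1$ and one must instead reuse the measures emerging inside the proof of Theorem~\ref{krmainthm}.
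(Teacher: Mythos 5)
Your first half matches the paper: the identity $\Tr(\mathcal R_{k,f,H_0}(V))=\Tr(\mathcal R_{k-1,f,H_0}(V))-\Tr\big(T^{H_0,\ldots,H_0}_{f^{[k-1]}}(V,\ldots,V)\big)$, the representation of the MOI trace by a finite measure via Theorem \ref{measure-existance} (and your observation that for $k=2$ one must fall back on the first-order measures produced inside the proof of Theorem \ref{krmainthm}, since Theorem \ref{measure-existance} starts at $k\geq2$, is a fair catch --- the paper glosses over this), the weight bookkeeping with a strictly smaller $\epsilon'$ in the induction hypothesis, and the distributional uniqueness argument are all as in the paper. The divergence is in how you extract a locally integrable \emph{density}. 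The paper does not pass through Lemma \ref{lem:partial integration} at this stage and does not re-run any approximation: it integrates by parts \emph{explicitly}, writing $\int f^{(m)}\eta_m\,d\lambda-\int f^{(m)}u^{m\vee n+\epsilon}\,d\mu_{m,\epsilon}$ as $\int f^{(m+1)}\eta_{m+1}\,d\lambda$ with $\eta_{m+1}$ given by the closed formula \eqref{Hg12} --- a combination of $-\int_0^\lambda\eta_m(t)\,dt$, the weighted distribution function $u^{m\vee n+\epsilon}(\lambda)\,\mu_{m,\epsilon}((-\infty,\lambda])$, and an antiderivative thereof. Each term is manifestly locally integrable, so absolute continuity at order $k$ is \emph{inherited} from order $k-1$ for free; this is exactly the ``$\eta_k$ from $\eta_{k-1}$'' mechanism the introduction advertises as the paper's main point.

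Your alternative --- obtaining only a possibly singular measure $\tilde\rho_\delta$ from Lemma \ref{lem:partial integration} and then proving absolute continuity by approximating $V$ with the finite-rank $V_l$ of Lemma \ref{aprlem1} and invoking \cite{PSS} --- contains a genuine gap. For $k\geq2$ the difference $\mathcal R_{k,f,H_0}(V_l)-\mathcal R_{k,f,H_0}(V_{l'})$ is \emph{not} a line integral over $t\in[0,1]$ of a $k$-th order MOI along the path $H_0+V_{l'}+t(V_l-V_{l'})$: the two quantities are Taylor remainders of $t\mapsto f(H_0+tV_l)$ and $t\mapsto f(H_0+tV_{l'})$ taken in \emph{different directions} at the same base point $H_0$, and the subtracted Taylor polynomials do not telescope along any single path. (For $k=1$ the identity does hold, which is why the paper uses this device only in Theorem \ref{krmainthm}.) Making your Cauchy estimate rigorous would require the slot-by-slot multilinear telescoping of $T_{f^{[k]}}(V_l,\ldots,V_l)-T_{f^{[k]}}(V_{l'},\ldots,V_{l'})$ together with changes of the base operators in the superscripts --- precisely the heavy machinery of \cite{PSS,NuSkJST} that this paper is constructed to avoid. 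As written, the key estimate $\|\eta_{k,V_l}-\eta_{k,V_{l'}}\|_{L^1((-a,a))}\to0$ is unsubstantiated for $k\geq2$; replacing that paragraph by the explicit antidifferentiation \eqref{Hg12} both closes the gap and collapses your argument onto the paper's.
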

	\begin{proof}
		We prove \eqref{Hg9} and \eqref{Hg10} using the principle of mathematical induction on $k$. The base case, $k=1$, of the induction follows from Theorem~\ref{krmainthm}. Now we assume \eqref{Hg9} and \eqref{Hg10} are true for $k\in\{1,2,\ldots,m\}$ for some $m\in \Nat$. Let $\epsilon\in (0,1]$. Let $f\in\Q_n^{m+1}(\R)$.  Thanks to Theorem \ref{thm:higher derivatives in norm} and Theorem \ref{measure-existance}, there exists a finite complex Borel measure $\mu_{m,\epsilon}$ such that
		\begin{align}\label{Hg11}
			\nonumber\Tr\left(\mathcal{R}_{m+1,f,H}(V)\right)=&\Tr\left(\mathcal{R}_{m,f,H}(V)\right)- \Tr\left(\bT^{H,\ldots,H}_{f^{[m]}}(V,\ldots,V)\right)\\
			=&\int_{\R}f^{(m)}(\lambda)\eta_{m}(\lambda)d\lambda-\int_{\R}f^{(m)}(\lambda)u^{m\vee n+\epsilon}(\lambda)d\mu_{m,\epsilon}(\lambda).
		\end{align}
		Next, by performing integration by parts on the right hand side of \eqref{Hg11}, we obtain
		\begin{align}\label{Hg111}
			\nonumber &\Tr\left(\mathcal{R}_{m+1,f,H}(V)\right) =\int_{\R} f^{(m)}(\lambda)\,\eta_m(\lambda)d\lambda-\int_\R f^{(m)}(\lambda)\,u^{m\vee n+\epsilon}(\lambda)d\mu_{m,\epsilon}(\lambda)\\
			\nonumber=&\int_{\R}f^{(m+1)}(\lambda)\left(-\int_{0}^{\lambda}\eta_{m}(t)dt\right)\,d\lambda+\int_{\R}f^{(m+1)}(\lambda)\,u^{m\vee n+\epsilon}(\lambda)\,\mu_{m,\epsilon}((-\infty,\lambda])\,d\lambda\\
			\nonumber& -(m\vee n+\epsilon)\int_\R f^{(m+1)}(\lambda)\left(\int_{0}^{\lambda}u^{m\vee n+\epsilon-1}(t)\mu_{m,\epsilon}\left((-\infty,t]\right)dt\right)d\lambda\\
			=&\int_{\R}f^{(m+1)}(\lambda)\eta_{m+1}(\lambda)d\lambda,
		\end{align}
		where
		\begin{align}\label{Hg12}
			\nonumber&\eta_{m+1}(\lambda)\\
			=&u^{m\vee n+\epsilon}(\lambda)\mu_{m,\epsilon}((-\infty,\lambda])-\int_{0}^{\lambda}\eta_{m}(t)dt-(m\vee n+\epsilon)\int_{0}^{\lambda}u^{m\vee n+\epsilon-1}(t)\mu_{m,\epsilon}\left((-\infty,t]\right)dt.
		\end{align}
		Now, let us confirm that it follows from \eqref{existance: higher order measures2}, \eqref{Hg12}, Theorem \ref{krmainthm} and the induction hypothesis, that 
		\begin{align}\label{new_Hg}
			\int_\R \frac{|\eta_{m+1}(\lambda)|}{(1+|\lambda|)^{n+m+1+\epsilon}}\,d\lambda\leq \tilde{C}_{n,m+1,\epsilon}\, \frac{1+b}{1-a}\,\max\limits_{\ell=1, \,m\vee n}\{\alpha_{n,V,H}^{\ell}\},
		\end{align}
		for every $\epsilon\in(0,1]$, where $\tilde{C}_{n,m+1,\epsilon}$ is some positive constant. Indeed, the base case, $k=1$, of the induction follows from Theorem~\ref{krmainthm}. Now, assume that the estimate \eqref{Hg9} holds for $k = 1, 2, \ldots, m$. It follows from \eqref{Hg12} that
		
		\begin{align*}
			&\int_\R \frac{|\eta_{m+1}(\lambda)|}{(1+|\lambda|)^{n+m+1+\epsilon}}\, d\lambda\\
			\leq &\int_\R\frac{|u^{m\vee n+\epsilon}(\lambda)| \|\mu_{m,\epsilon}\|}{(1+|\lambda|)^{n+m+1+\epsilon}}\, d\lambda + \int_\R\int_\R \frac{1_{[0,\lambda]}(t)\,|\eta_{m}(t)|}{(1+|\lambda|)^{n+m+1+\epsilon}}\,dt\,d\lambda\\
			&+(m\vee n+\epsilon)\int_R \frac{\int_{0}^{\lambda} |u^{m\vee n+\epsilon-1}(t)|\,\|\mu_{m,\epsilon}\|\, dt}{(1+|\lambda|)^{n+m+1+\epsilon}}\, d\lambda\\
			\leq &\int_\R\frac{\|\mu_{m,\epsilon}\|}{(1+|\lambda|)^{n+m+1-m\vee n }}\,d\lambda + \int_\R \,|\eta_{m}(t)| \left(\int_\R \frac{1_{[0,\lambda]}(t)}{(1+|\lambda|)^{n+m+1+\epsilon}}\,d\lambda\right)\,dt\\
			&+(m\vee n+\epsilon)\int_R \frac{\|\mu_{m,\epsilon}\|}{(1+|\lambda|)^{n+m+1-m\vee n}}\, d\lambda.
		\end{align*}
		Now, by \eqref{existance: higher order measures2} and the induction hypothesis, the estimate \eqref{new_Hg} follows from the above. This completes the induction on $k$. 
		
		Since the left-hand side of \eqref{Hg10} is real-valued whenever $f$ is real-valued, we obtain that $Re~\eta_{k}$ (instead of $\eta_k$) satisfies \eqref{Hg10} for real-valued $f\in \mathfrak{Q}_n^k(\R) $ and, consequently, for all $f\in \mathfrak{Q}_n^k(\R) $. So we may assume $\eta_k$ is real valued.
		
		The uniqueness of $\eta_k$ can be established in a similar manner as we did in Theorem \ref{krmainthm}. Indeed, suppose $\eta_k$ and $\xi_k$ satisfy \eqref{Hg10}, and let $\gamma_k=\eta_k-\xi_k$. Then from \eqref{Hg10}, we conclude
		\begin{align}\label{Hg13}
			\int_\R f^{(k)}(\lambda)\gamma_k(\lambda)d\lambda=0\quad \text{for all }f\in C_c^\infty(\R).
		\end{align}
		Now consider the distribution $T_{\gamma_k}$ defined by 	
		$$T_{\gamma_k}(\phi)=\int_\R \phi(\lambda)\, \gamma_k(\lambda)d\lambda$$ for every $\phi\in C^\infty_c(\R)$.
		By \eqref{Hg13} and the definition of the derivative of a distribution, $T_{\gamma_k}^{(k)}=0$. Hence by \cite[Theorem 3.10 and Example 2.21]{gwaiz}, $\gamma_k(\lambda)d\lambda=f_k(\lambda)\,d\lambda$ for some polynomial $f_k$ of degree at most $k-1$. Consequently, $\eta_{k}\in L^1_{loc}(\R)$ satisfying \eqref{Hg10} is unique up to a polynomial summand of degree at most $k-1$.  Hence, we can assert that the estimates \eqref{Hg10} hold true for all $\epsilon \in (0,1]$.
		This completes the proof.
	\end{proof}

	%

	Next, we provide the existence of $n$-th order spectral shift functions under a relaxed assumption compared to the above, which extends \cite[Theorem 4.1]{vNS22} to the relatively bounded context.
	
	\begin{thm}\label{thm:nth order}
		Let $n\in \N$. Let $H$ be a self-adjoint (unbounded) operator in $\H$, and let $V$ be a symmetric and relatively $H$-bounded operator with $H$-bound $a\in[0,1)$, that is,
		$$\norm{V\psi}\leq a\norm{H\psi}+b\norm{\psi} \text{ for all } \psi\in\dom H,\text{ for some $b\in[0,\infty)$,}$$  such that $V(H-i)^{-1}\in\S^n$. Then, there exists $c_n>0$ and a real-valued function $\eta_n$
		such that
		\begin{align}\label{eta estimate}
			\int_\R |\eta_n(x)|\,\frac{dx}{(1+|x|)^{n+\epsilon}}\leq c_n\,(1+\epsilon^{-1})\, \frac{1+b}{1-a}\,\nrm{V(H-i)^{-1}}{n}^n\quad\text{for all } \epsilon >0
		\end{align}	
		and
		\begin{align}
			\label{tff}
			\Tr(\mathcal R_{n,f,H}(V))=\int_\R f^{(n)}(x)\eta_n(x)\,dx\,
		\end{align}
		for every $f\in\mathfrak{W}_n$ (see \cite[Definition 3.1]{vNS22}). The locally integrable function $\eta_n$ is determined by \eqref{tff} uniquely up to a polynomial summand of degree at most $n-1$.
	\end{thm}
	\begin{proof}
		The proof can be established along the same lines as the proof of \cite[Theorem 4.1]{vNS22}, by establishing analogous results to those of \cite[Lemma 4.8, Lemma 4.9]{vNS22} in the relatively bounded setting. The latter can be achieved in a similar way as we did in Lemma \ref{aprlem1} and Lemma \ref{aprlem2}, and Theorem \ref{krmainthm}. The replacement of the factor $(1+\|V\|)$ with $\frac{1+b}{1-a}$ is notable; these factors coincide if $V$ is bounded.
	\end{proof}
	
	\section{Applications}
	\subsection{Applications: bounded perturbations}\label{sct:Examples}
	
	In this section we collect some known classes of examples of self-adjoint operators $H$ and $V$ satisfying our assumptions, namely $V\in\mathcal B(\mathcal H)$ and
	\begin{align}\label{eq:assumption}
		V(H-i)^{-p}\in\mathcal S^{n/p}\qquad(p=1,\ldots,n).
	\end{align}
	We merely scratch the surface of the collection of examples; more can be found in \cite{vNS22,Rennie,SZ,Simon}, and references therein.
	
	A result about Schatten class membership typically comes along with a bound on the Schatten norm, and by our main result these bounds imply estimates on the weighted integral norm of the spectral shift functions, for each specific situation. For brevity we omit these bounds, as they can be found in the respective cited references. Such bounds are called Cwikel estimates and Birman--Solomyak estimates, and form an active area of research \cite{LSZ,MP}.
	
	\subsubsection{Dirac and Laplace operators on Euclidean spaces}
	If, for suitable functions $f,g$ on $\R^d$, the perturbation $V= M_f$ is the operator of multiplication by $f$, and moreover $(H-i)^{-p}=g(-i\nabla)$, where $\nabla$ is the vector whose components are the operators $\{\partial_i\}_{i=1}^d$, then conditions for \eqref{eq:assumption} to hold can be found in \cite{Cwikel} and \cite[Chapter 4]{Simon}.
	As shown in the latter reference, such results typically require different proofs for $p\leq n/2$ and $p>n/2$. Moreover, the spaces
	$$\ell^p(L^q)(\R^d):=\left\{f:\R^d\to\C\text{ measurable}~:~\sum_{k\in\mathbb{Z}^d}(\|f\upharpoonright_{(0,1)^d+k}\|_{L^q(((0,1)^d+k))})^p<\infty\right\},$$
	for $p\in[1,\infty)$ and $q\in[1,\infty]$, are frequently useful.
	
	Two especially important examples of $H$ are the Laplacian $H=\Delta=\sum_{k=1}^d\frac{\partial^2}{\partial x_k^2}$ in $L^2(\R^d)$ and the Dirac operator $H=D$, defined as
	$$D:=-i\sum_{k=1}^d e_k\otimes\frac{\partial}{\partial x_k}$$
	in $\C^N\otimes L^2(\R^d)$, where $e_1,\ldots,e_d\in M_N(\C)$ are self-adjoint matrices satisfying $e_k e_l+e_l e_k=2\delta_{k,l}$ for all $k,l\in\{1,\ldots,d\}$. Here $N=2^{d/2}$ if $d$ is even and $N=2^{(d+1)/2}$ if $d$ is odd. 
	The following result is proven in \cite[Theorem III.4]{S21}.
	
	\begin{thm}[\cite{S21}]
		
		\label{thm:summability Dirac and Laplace flat space}
		Let $m,d\in\N$ and $f\in\ell^1(L^2)(\R^d)\cap L^\infty(\R^d)$ be real-valued, with associated multiplication operator $M_f$.
		\begin{enumerate}[label=\textnormal{(\alph*)}]
			\item If $n>d$, then
			$$(I\otimes M_f)(D-i)^{-p}\in\S^{n/p},\qquad p=1,\ldots,n.$$
			\item If $n>\frac{d}{2}$, then 
			$$M_f(\Delta-i)^{-p}\in\S^{n/p},\qquad p=1,\ldots,n.$$
		\end{enumerate}
	\end{thm}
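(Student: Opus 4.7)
The plan is to exploit the Fourier-multiplier structure of the resolvents and apply the classical Cwikel and Birman--Solomyak inequalities, which bound the Schatten norms of mixed operators of the form $M_f\,g(-i\nabla)$ on $L^2(\R^d)$. The Clifford relations $e_ke_l+e_le_k=2\delta_{k,l}$ give $D^2=-\Delta\otimes I_N$, so $(D-i)^{-p}=(D+i)^p(D^2+1)^{-p}$ has matrix-valued symbol of pointwise operator norm $(1+|\xi|^2)^{-p/2}$. Similarly $(\Delta-i)^{-p}$ is scalar Fourier multiplication with symbol of absolute value $(1+|\xi|^4)^{-p/2}$. Both parts therefore reduce to Schatten bounds on $M_f\,g(-i\nabla)$ with these explicit polynomial decay profiles.

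Next, split the argument according to whether the target Schatten index $q:=n/p$ lies above or below $2$. In the regime $q>2$ (equivalently $p<n/2$), invoke the classical Cwikel estimate $\|M_f\,g(-i\nabla)\|_q\leq c_q\|f\|_{L^q(\R^d)}\|g\|_{L^q(\R^d)}$. Since $\ell^1(L^2)\subseteq L^1\cap L^2$ by Cauchy--Schwarz, interpolation with $f\in L^\infty$ yields $f\in L^q(\R^d)$ for every $q\in[1,\infty]$, while finiteness of the $L^q$-norm of the decay function reduces to convergence of $\int_{\R^d}(1+|\xi|^2)^{-n/2}\,d\xi$ (Dirac) or $\int_{\R^d}(1+|\xi|^4)^{-n/2}\,d\xi$ (Laplace); these are precisely the hypotheses $n>d$ and $n>d/2$. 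In the complementary regime $q\in[1,2]$ (equivalently $p\geq n/2$), apply the Birman--Solomyak estimate $\|M_f\,g(-i\nabla)\|_q\leq c_q\|f\|_{\ell^q(L^2)}\|g\|_{\ell^2(L^q)}$. The first factor is finite since $\ell^1(L^2)\subseteq\ell^q(L^2)$ for $q\geq 1$, and the second reduces to convergence of $\sum_{k\in\mathbb Z^d}(1+|k|^2)^{-p}$ (Dirac) or $\sum_{k\in\mathbb Z^d}(1+|k|^4)^{-p}$ (Laplace). Both sums converge because $p\geq n/2$ combined with $n>d$ (respectively $n>d/2$) forces $2p>d$ (respectively $4p>d$).

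The hard part will be the matrix-valued symbol of $(D-i)^{-p}$: one must either verify that Cwikel and Birman--Solomyak extend to operator-valued Fourier multipliers controlled pointwise in operator norm (a standard but nontrivial refinement), or sidestep via the H\"older identity
$$\|M_f(D-i)^{-p}\|_{n/p}^2=\|M_f(D^2+1)^{-p}M_{\bar f}\|_{n/(2p)}\leq \|M_f(1-\Delta)^{-p/2}\|_{n/p}^2,$$
which reduces the Dirac case to scalar Fourier-multiplier estimates for half-integer powers of $1-\Delta$; the condition $n>d$ there corresponds exactly to the condition $n/2>d/2$ needed by the Laplace case. The endpoint $q=2$ is harmless since Plancherel returns the same Hilbert--Schmidt bound from either regime.
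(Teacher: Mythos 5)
The paper does not prove this theorem; it is quoted verbatim from \cite[Theorem III.4]{S21}, and the surrounding discussion only points to the standard toolkit (Kato--Seiler--Simon/Cwikel for $p\le n/2$, Birman--Solomyak for $p>n/2$, and the spaces $\ell^p(L^q)$). Your proposal is exactly that standard argument, and its overall architecture is sound: the reduction of the resolvents to Fourier multipliers with decay $(1+|\xi|^2)^{-p/2}$ (Dirac) and $(1+|\xi|^4)^{-p/2}$ (Laplace) is correct, the inclusion $\ell^1(L^2)\cap L^\infty\subseteq L^q$ for all $q\in[1,\infty]$ is correct, and the H\"older trick $\|M_f(D-i)^{-p}\|_{n/p}^2=\|M_f(D^2+1)^{-p}M_{\bar f}\|_{n/(2p)}$ (up to the harmless factor coming from the $\mathbb{C}^N$ tensor leg, using $D^2=-\Delta\otimes I_N$) is a clean way to dodge the operator-valued Cwikel issue. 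One correction: the Birman--Solomyak inequality for $q\in[1,2]$ is $\|M_f\,g(-i\nabla)\|_q\le c_q\|f\|_{\ell^q(L^2)}\|g\|_{\ell^q(L^2)}$, with \emph{both} factors in $\ell^q(L^2)$, not $\|g\|_{\ell^2(L^q)}$ as you wrote. This does not break the proof: for $g(\xi)=(1+|\xi|^2)^{-p/2}$ one has $\|g\|_{L^2(Q_k)}\asymp(1+|k|)^{-p}$, so $g\in\ell^q(L^2)$ iff $pq=n>d$, which is the hypothesis of part (1); similarly for the Laplacian one needs $2pq=2n>d$, i.e.\ $n>d/2$. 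So after restating the inequality correctly the same convergence checks close the argument, and in fact they match the hypotheses more tightly than the $2p>d$ condition you verified.
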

	
	\subsubsection{The Hodge--Dirac operator on a Riemannian space}
	Part of the above result generalizes to suitable Riemannian manifolds, in the sense of the following theorem. For its proof we refer to \cite[Equation (3.11)]{SZ}, which in that paper is proven in order to obtain \cite[Theorem 3.4.1]{SZ} by specializing to $p=d$. 
	\begin{thm}[\cite{SZ}]
		Let $(X,g)$ be a second countable $d$-dimensional complete smooth Riemannian manifold. Let $\Omega^k_c(X)$ be the space of smooth compactly supported differential k-forms, and $\H_k$ its completion with respect to the natural Hilbert space structure defined by $g$. Defined in the Hilbert space $\mathcal{H}:=\oplus_{k=0}^d \H_k$, let $D_g$ be the associated Hodge-Dirac operator. For $f\in C_c^\infty(X)$, let $M_f$ denote the associated multiplication operator on $\H$. Then, for all $p\geq1$ we have
		$$M_f(D_g-i)^{-p}\in\mathcal{S}^{d/p,\infty}.$$
	\end{thm}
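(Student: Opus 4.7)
The plan is to reduce this intrinsic Riemannian estimate to a flat Euclidean Cwikel-type bound, using compact support of $f$ to localize. Set $K:=\operatorname{supp}(f)$ and cover it by finitely many relatively compact coordinate charts $\{(U_i,\varphi_i)\}_{i=1}^N$ with a subordinate smooth partition of unity $\{\chi_i\}$ satisfying $\sum_i\chi_i\equiv 1$ on $K$. Writing $M_f=\sum_i M_{f\chi_i}$ and using the quasi-triangle inequality of the Lorentz--Schatten quasi-norm $\|\cdot\|_{d/p,\infty}$, it suffices to bound each $M_{f\chi_i}(D_g-i)^{-p}$ individually. Fix $i$, set $h:=f\chi_i\in C_c^\infty(U_i)$, and choose $\psi\in C_c^\infty(U_i)$ with $\psi\equiv 1$ on $\operatorname{supp}(h)$, so that $M_h=M_h M_\psi$ and $M_h(D_g-i)^{-p}=M_h M_\psi(D_g-i)^{-p}+M_h\bigl[(D_g-i)^{-p},M_\psi\bigr]$. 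The commutator $[(D_g-i)^{-p},M_\psi]$ expands via the Leibniz identity into a sum of products of the form $(D_g-i)^{-q_1}[D_g,M_\psi](D_g-i)^{-q_2}\cdots$, where each $[D_g,M_\psi]$ is multiplication by the bounded compactly supported function $\sigma(D_g)(d\psi)$; iterating once more if necessary, these error terms acquire enough powers of the resolvent to sit in a strictly smaller Schatten ideal than $\mathcal S^{d/p,\infty}$ and can therefore be absorbed.

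The leading term $M_h M_\psi(D_g-i)^{-p}$ is then transferred to the chart $\varphi_i(U_i)\subset\R^d$ via the natural unitary isomorphism induced by $\varphi_i$ together with a local orthonormal framing of $\oplus_k\Omega^k_c$; in this chart one writes $D_g=D_0+A$, where $D_0$ is a flat Hodge--Dirac operator on $\R^d$ and $A$ is a smooth bounded zeroth-order perturbation. A further resolvent expansion around $D_0$ (truncated at sufficient order, with the remainders again landing in smaller ideals) reduces the problem to proving $M_{\tilde h}(D_0-i)^{-p}\in\mathcal S^{d/p,\infty}$ on $\R^d$, for a compactly supported smooth function $\tilde h$. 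Since $D_0^2=-\Delta\otimes I$, functional calculus rewrites this as $M_{\tilde h}\,h_p(-i\nabla)$ with symbol $h_p(\xi)$ of size $(|\xi|^2+1)^{-p/2}$, which belongs to $L^{d/p,\infty}(\R^d)$ because $|\xi|^{-p}\in L^{d/p,\infty}(\R^d)$; combined with $\tilde h\in C_c^\infty\subset L^{d/p,\infty}$ this is precisely the setting of the weak-type Cwikel--Birman--Solomyak estimate $\|M_g k(-i\nabla)\|_{d/p,\infty}\lesssim \|g\|_{L^{d/p,\infty}}\|k\|_{L^{d/p,\infty}}$ recorded, for instance, in the references \cite{LSZ,MP} cited just above the theorem.

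The main obstacle is the bookkeeping of remainder terms, and in particular verifying that every commutator or coordinate-change error produced in the first two steps lies in a Schatten ideal strictly smaller than (or at worst equal to) $\mathcal S^{d/p,\infty}$. The key point is that each additional factor of $(D_g-i)^{-1}$ produced by a commutator improves the singular-value decay by one order in the Weyl asymptotic, so after sufficiently many iterations the remainder becomes trace class and is automatically contained in $\mathcal S^{d/p,\infty}$. A secondary subtlety is that $\mathcal S^{d/p,\infty}$ is only quasi-Banach when $d/p<1$ (i.e.\ $p>d$), so the repeated use of the quasi-triangle inequality must be controlled; however, since only finitely many terms appear, the accumulated quasi-norm constant is harmless. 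One could alternatively bypass the flat reduction by directly invoking Weyl-type singular value asymptotics for $M_f(D_g-i)^{-p}$ from the pseudodifferential calculus on manifolds, but the localization approach above is the most transparent and matches the strategy indicated by \cite[Equation (3.11)]{SZ}.
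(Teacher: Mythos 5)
First, a point of order: the paper does not prove this theorem at all --- it is quoted verbatim from the literature, with the text immediately preceding it referring the reader to \cite[Equation (3.11)]{SZ} for the proof. So you are attempting to reprove a cited result, and your sketch must stand on its own; it cannot be checked against an argument in this paper. Your overall strategy (localize with a partition of unity, commute cutoffs past resolvents, transfer to a chart, and invoke a flat Cwikel-type bound) is the standard and in-principle viable route, but as written it has three genuine gaps. (i) The estimate you invoke at the crucial final step, $\|M_g k(-i\nabla)\|_{d/p,\infty}\lesssim\|g\|_{L^{d/p,\infty}}\|k\|_{L^{d/p,\infty}}$, is \emph{false} in that symmetric weak--weak form: Cwikel's theorem requires $g$ in the \emph{strong} space $L^{q}$ (and only for $q>2$), and the regime $d/p\leq 2$ requires an entirely different Birman--Solomyak/Solomyak-type argument with $g$ in a space like $\ell^{q}(L^{2})$ --- exactly the case distinction the paper itself emphasizes in Section 5.1. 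Since your $\tilde h$ is in $C_c^\infty$ this is repairable, but the statement you cite as the engine of the proof is not a theorem. (ii) Your chart comparison $D_g=D_0+A$ with $A$ a ``bounded zeroth-order perturbation'' is incorrect: in a general coordinate chart the metric coefficients enter the principal symbol, so $D_g-D_0$ is a first-order operator with variable coefficients. The resolvent comparison still works because $(D_g-D_0)(D_0-i)^{-1}$ is bounded after cutting off, but the argument as you state it would not produce the claimed remainder structure.

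(iii) The bookkeeping of error terms is circular as described. To show that a commutator term such as $(D_g-i)^{-q_1}[D_g,M_\psi](D_g-i)^{-q_2}$ lies in a smaller ideal, you must already know $M_\chi(D_g-i)^{-q}\in\mathcal S^{d/q,\infty}$ for the intermediate exponents $q$, i.e.\ the very statement being proven; and ``iterating until trace class'' quietly uses $M_\chi(D_g-i)^{-q}\in\mathcal S^1$ for $q>d$, which is again an instance of the theorem (in a smaller ideal). This is fixable by organizing the whole argument as an induction on $p$ with the flat, localized $p=1$ case as the base, but that structure is essential and is missing from your sketch. In short: the approach can be made to work, but none of the three load-bearing steps is correct as currently written, and the honest course for this paper is what the authors do --- cite \cite{SZ}.
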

	Our main result thus becomes applicable by setting $n=d+1$, and, independently of dimension, implies existence of spectral shift functions of all orders for Dirac operators on manifolds as in the above theorem.
	
	In light of the above, it is reasonable to expect a generalization of Theorem \ref{thm:summability Dirac and Laplace flat space} to exist for suitable Riemannian manifolds, but this is well beyond the scope of this text. A similar remark applies to matrix-valued differential operators and pseudodifferential operators.
	
	\subsubsection{Noncommutative Geometry}
	Noncommutative geometry is a prime motivator for this paper, as it too is involved with proving results about geometrical objects -- such as differential operators -- without reference to their underlying space. The typical benefit for the noncommutative geometer is that the thus obtained results hold for more general objects which may not be geometrical, but may still be useful to describe physics.

	A first application of the spectral shift function is to the theory of spectral flow \cite{ACDS}, as used in index theory. Typically, one takes
	$$H=D,\qquad V=u[D,u^*],\qquad\text{so that}\qquad H+V=uDu^*,$$
	for a unitary $u$. More generally, perturbations in gauge theory take the form $V=\sum_{j=1}^n a_j[D,b_j]$, when $a_j$ and $b_j$ are elements of a `smooth' algebra $\mathcal A\subseteq\mB(\H)$.
	Higher order spectral shift functions also provide analytical information on the Taylor remainders of the spectral action, which may be useful for the description of classical and quantum field theories \cite{ILV,oneloop}.
	
	
	
	The following result shows that our Hypothesis \ref{mainhypo} holds for a wide class of nonunital spectral triples.
	It is a special case of \cite[Proposition 10]{Rennie}. 

	\begin{thm}
		[\cite{Rennie}]
		Let $(\A,\H,\mathcal D)$ be a smooth local $(d,\infty)$-summable spectral triple in the sense of \cite[Definition 7]{Rennie}.  Then for all $a\in\mB(\H)$ satisfying $a\phi=\phi a=a$ for some $\phi\in\mathcal A_c$ we have 
		$$a(\mathcal D-i)^{-p}\in\S^{(d+1)/p},\qquad p=1,\ldots,d+1.$$
	\end{thm}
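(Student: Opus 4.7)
Since the statement is explicitly presented as a special case of \cite[Proposition 10]{Rennie}, the plan is to invoke that result directly. For transparency I would sketch the strategy of Rennie's proof so that the reader sees how the axioms of a smooth local $(d,\infty)$-summable spectral triple combine to verify our Hypothesis~\ref{mainhypo}.

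The starting point is the locality condition $a=a\phi=\phi a$, which allows one to rewrite
$$a(\mathcal{D}-i)^{-p}=\phi\,a\,\phi\,(\mathcal{D}-i)^{-p},$$
and then to commute the right-hand $\phi$ through each of the $p$ resolvent factors by repeatedly applying
$$[\phi,(\mathcal{D}-i)^{-1}]=-(\mathcal{D}-i)^{-1}[\mathcal{D},\phi](\mathcal{D}-i)^{-1}.$$
The smoothness axiom of the spectral triple guarantees that the iterated commutators of $\phi$ with $\mathcal{D}$ are bounded, and that they stay within a suitable smooth subalgebra to which the $(d,\infty)$-summability still applies. After finitely many such commutations, $a(\mathcal{D}-i)^{-p}$ is expressed as a finite linear combination of products of the form
$$B_0\,\phi_{1}(\mathcal{D}-i)^{-1}\,B_1\,\phi_{2}(\mathcal{D}-i)^{-1}\,B_2\cdots \phi_{p}(\mathcal{D}-i)^{-1}\,B_p,$$
where each $B_j\in\mathcal B(\mathcal H)$ and each $\phi_j$ is either $\phi$ itself or an iterated commutator of $\phi$ with $\mathcal{D}$.

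The $(d,\infty)$-summability hypothesis then yields $\phi_j(\mathcal{D}-i)^{-1}\in\mathcal{L}^{(d,\infty)}$, the Dixmier--Macaev ideal, for each $j$. Since $\mathcal{L}^{(d,\infty)}\subseteq \mathcal{S}^{q}$ for every $q>d$, in particular each factor lies in $\mathcal{S}^{d+1}$ with a quantitative norm bound. Applying H\"older's inequality in Schatten ideals to the product of the $p$ factors interleaved with bounded operators yields
$$a(\mathcal{D}-i)^{-p}\in\mathcal{S}^{(d+1)/p},\qquad p=1,\ldots,d+1,$$
which is exactly the conclusion. The main technical obstacle -- verifying that every term produced by the commutator expansion indeed fits the stated pattern -- is the combinatorial bookkeeping, and this is what Rennie carries out by induction on $p$. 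Everything else is a direct consequence of H\"older's inequality and the inclusion $\mathcal{L}^{(d,\infty)}\subseteq\mathcal{S}^{d+1}$.
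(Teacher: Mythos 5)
The paper offers no proof of this statement beyond the citation itself --- it is quoted verbatim as a special case of \cite[Proposition 10]{Rennie} --- and your primary move, invoking that proposition directly, is exactly what the paper does. Your supplementary sketch of the locality--commutator expansion followed by the inclusion $\mathcal{L}^{(d,\infty)}\subseteq\mathcal{S}^{d+1}$ and H\"older's inequality is a plausible reconstruction of Rennie's argument, but it is not part of, nor required by, the paper's treatment.
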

	
	In particular cases one may expect strenghtenings of the above result. This is indeed the case for the Moyal plane $\R^d_\theta$, which is a prominent motivating example of a nonunital spectral triple.
	The result below follows from \cite{LSZ}, as is written out explicitly below, for convenience of the reader. 
	\begin{thm}[\cite{LSZ}]
		Let $d\geq 2$ be even, $V\in W^{d,1}(\R^d_\theta)$, $h\in\N$, and set $n:=\lfloor \frac{d}{h}\rfloor+1$. For all $p\in\{1,\ldots,n\}$, if we assume that $g\in \ell^{d/(hp),\infty}(L^\infty)(\R^d)$, then we have that $Vg(-i\nabla_\theta)\in\S^{n/p}$. In particular,
		$$V(D_\theta-i)^{-p}\in\S^{n/p}\qquad(p\in\{1,\ldots,n\},~n=d+1),$$
		and
		$$V(\Delta_\theta-i)^{-p}\in\S^{n/p}\qquad(p\in\{1,\ldots,n\},~n=d/2+1),$$	
		where $D_\theta$ and $\Delta_\theta$ are the noncommutative Dirac and Laplace operators (see \cite[p. 29]{LSZ}).
	\end{thm}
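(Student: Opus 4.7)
The plan is to deduce all three assertions from the Cwikel-type estimate proved in \cite{LSZ}, specialized to appropriate resolvent symbols. First I would cite the general Cwikel estimate from \cite{LSZ}: for $V \in W^{d,1}(\R^d_\theta)$ and a (possibly matrix-valued) symbol $g \in \ell^{d/(hp),\infty}(L^\infty)(\R^d)$, the operator $Vg(-i\nabla_\theta)$ lies in the Lorentz--Schatten class $\S^{d/(hp),\infty}$, which embeds into $\S^{n/p}$ because $n/p = (\lfloor d/h \rfloor + 1)/p > d/(hp)$. This gives the general assertion.

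For the Dirac case, I would note that $D_\theta = -i\sum_{k=1}^d e_k \otimes \partial_k^\theta$ has matrix-valued Fourier symbol $E(\xi) := \sum_k e_k \xi_k$, and the Clifford relations $e_ke_l + e_le_k = 2\delta_{kl}$ yield $E(\xi)^2 = |\xi|^2 I_N$. Therefore $(D_\theta - i)^{-p} = g_1(-i\nabla_\theta)$ with $g_1(\xi) = (E(\xi)-i)^{-p}$, and the identity
\begin{align*}
(E(\xi)-i)^{-1} = (E(\xi)+i)(|\xi|^2+1)^{-1}
\end{align*}
together with iteration gives $\|g_1(\xi)\| \leq C_p(1+|\xi|)^{-p}$. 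Consequently $g_1 \in \ell^{d/p,\infty}(L^\infty)(\R^d)$, matching the hypothesis with $h=1$ and $n=d+1$.

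For the Laplace case, $\Delta_\theta$ has scalar symbol $|\xi|^2$, so $(\Delta_\theta - i)^{-p} = g_2(-i\nabla_\theta)$ with $|g_2(\xi)| = (|\xi|^4+1)^{-p/2} \leq (1+|\xi|)^{-2p}$, placing $g_2$ in $\ell^{d/(2p),\infty}(L^\infty)(\R^d)$, which is the required condition with $h=2$ and $n=d/2+1$.

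The principal (and essentially only) obstacle is aligning conventions with \cite{LSZ}: one must confirm that their Cwikel estimate accommodates matrix-valued symbols as needed for $D_\theta$, and that the symbol--operator identifications $D_\theta \leftrightarrow E(\cdot)$ and $\Delta_\theta \leftrightarrow |\cdot|^2$ used above agree with those of that reference. Once these are pinned down, the proof collapses to the elementary decay computations displayed.
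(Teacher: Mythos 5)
Your specializations to the Dirac and Laplace operators are fine and essentially identical to the paper's: the symbols $(\gamma\cdot\xi-i)^{-p}$ and $(\|\xi\|^2-i)^{-p}$ decay like $(1+|\xi|)^{-hp}$ for $h=1,2$, which indeed places them in $\ell^{d/(hp),\infty}(L^\infty)(\R^d)$, and the matrix-valued case is handled component-wise. The problem is your first step. You invoke a single ``general Cwikel estimate'' asserting that $V\in W^{d,1}(\R^d_\theta)$ and $g\in\ell^{d/(hp),\infty}(L^\infty)(\R^d)$ give $Vg(-i\nabla_\theta)\in\mathcal S^{d/(hp),\infty}$ uniformly in $p$. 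No such theorem is available in \cite{LSZ}, and for good reason: when $p>n/2$ one has $d/(hp)<2$, and weak-type Cwikel estimates below exponent $2$ are exactly where the classical theory breaks down (this is why the paper remarks earlier that ``such results typically require different proofs for $p\leq n/2$ and $p>n/2$''). The actual proof is forced into a case distinction. For $p\leq n/2$ it uses \cite[Lemma 2.3]{LSZ} to place $V\otimes g$ in a noncommutative $L^{q,\infty}$-space with $q=d/(hp)\geq 1$, observes that $n/p>d/(hp)$ and $n/p\geq 2$ so that $L^{n/p}\cap L^\infty$ interpolates between $L^2$ and $L^\infty$, and then applies \cite[Theorem 7.2]{LSZ}. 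For $p>n/2$ it sets $q=n/p\in[1,2)$, upgrades the hypothesis on $V$ to $V\in W^{d,q}(\R^d_\theta)$ via \cite[Proposition 6.15(iii)]{LSZ} and Sobolev embedding, notes $g\in\ell^q(L^\infty)(\R^d)$ (a strong, not weak, lattice condition), and applies \cite[Theorem 7.7]{LSZ} to land directly in the strong class $\mathcal L_q=\mathcal S^{n/p}$ --- no weak-type intermediate is ever obtained in this regime.

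So the gap is concrete: the small-exponent half of your argument rests on an estimate that does not exist, and you also skip the step that converts the hypothesis $V\in W^{d,1}(\R^d_\theta)$ into the integrability ($V\in L^q$, resp.\ $V\in W^{d,q}$) actually required by the two theorems of \cite{LSZ} that do the work. To repair the proof you need to split into the regimes $p\leq n/2$ and $p>n/2$ and run the two separate arguments above.
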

	\begin{proof}
		
		A combination of \cite[Proposition 6.15(iii)]{LSZ}, \cite[Definition 6.14]{LSZ}, and \cite[Definition 6.7]{LSZ} yields $V\in W^{d,q}(\R^d_\theta)$ and thus $V\in L^q(\R^d_\theta)$ for all $q\geq1$. 
		
		We define $n:=\lfloor\frac{d}{h}\rfloor+1$ and let $p\in\{1,\ldots,n\}$, $h\in\N$, and $g\in \ell^{d/(hp),\infty}(L^\infty)(\R^d)\subseteq L^{d/(hp),\infty}(\R^d)$. 	
		
		First suppose that $p\leq n/2$. Setting $q:=d/(hp)$, a case distinction shows that $q\geq1$. We find $g\in L^{q,\infty}(\R^d)$ and $V\in L^q(\R^d_\theta)$, hence \cite[Lemma 2.3]{LSZ} implies that $V\otimes g\in\mathcal L_{q,\infty}(\mathcal{N})$, the noncommutative $L^{q,\infty}$-space of the von Neumann algebra $\mathcal{N}:=L^\infty(\R^d_\theta)\otimes L^\infty(\R^d)$. In particular, since $n/p> d/(hp)$, we have $V\otimes g\in (\mathcal{L}_{n/p}\cap \mathcal{L}_\infty)(\mathcal{N})$. We note that $\frac{n}{p}\geq2$ by assumption, so $L^{n/p}\cap L^\infty$ is an interpolation space for $(L^2,L^\infty)$. Therefore, \cite[Theorem 7.2]{LSZ} gives
		$$Vg(-i\nabla_\theta)\in \mathcal L_{n/p}(\mathcal B(L^2(\R^d)))=\mathcal S^{n/p}.$$
		
		Now suppose that $p>n/2$. Set $q:=\frac{n}{p}\in[1,2)$ and note that $g\in \ell^q(L^\infty)(\R^d)$. We have $V\in W^{d,q}(\R^d_\theta)$ by Sobolev embedding, hence \cite[Theorem 7.7]{LSZ} implies that
		$$Vg(-i\nabla_\theta)\in\mathcal L_q(\mathcal B(L^2(\R^d)))=\mathcal S^{n/p},$$
		completing the proof of the first statement.
		
		To prove the second statement, one employs the functions $g$ on $\R^d$ defined by
		$$g(x):=(\gamma \cdot x-i)^{-p},$$
		$$g(x):=(\|x\|^2-i)^{-p},$$
		where $\gamma$ is the vector of Clifford matrices (see \cite[Definition 6.17]{LSZ}).
		We indeed have $g\in \ell^{d/(hp),\infty}(L^\infty)(\R^d)$ for $h=1$, $h=2$, respectively. In the former case $g$ is matrix-valued and this inclusion can be interpreted component-wise, the finite component $\C^{2^{\lfloor d/2\rfloor}}$ of the Hilbert space factors through, and one indeed obtains $V(D_\theta-i)^{-p}\in\S^{n/p}(\C^{2^{\lfloor d/2\rfloor}}\otimes L^2(\R^d))$ as well as $V(\Delta_\theta-i)^{-p}\in\S^{n/p}(L^2(\R^d))$.
	\end{proof}
	\subsection{Applications: unbounded perturbations}
	Atomic Hamiltonians provide a rich class of examples in which the perturbation $V$ is relatively bounded.
	The most basic example is the Hamiltonian of the hydrogen atom,
	i.e., the unbounded operator
	$$H+V=-\frac{\hbar^2}{2m}\sum_i\frac{\partial^2}{\partial x_i^2}+V,$$
	acting in $L^2(\R^d)$, where $V$ is the Coulomb potential, namely, the multiplication operator
	$$V\psi(x)=-\frac{e^2}{4\pi\epsilon_0}\frac{1}{\norm{x}}\psi(x),$$
	for physical constants $\hbar,m,e,\epsilon_0\in\R$. A celebrated result of Kato (\cite{Kato}) tells us that, indeed, $V$ is relatively $H$-bounded. This result does not change if $V$ is replaced by any other $L^2(\R^d)+L^\infty(\R^d)$-function, or if a larger number of possibly interacting particles are considered, either in $H$ or $V$ or both. In these atomic or molecular quantum systems, $a=0$, and therefore \eqref{eq:intro:operator function} exists on $(-\infty,\infty)$, whereas $a$ becomes nonzero in the `relativistic case' where $H$ is replaced with a Dirac operator $D$, and \eqref{eq:intro:operator function} exists on a bounded interval $(-\frac{1}{a},\frac{1}{a})$. Clearly, the relatively bounded condition allows much more than these two classes of examples.
	
	%
	%
	%
	
	\subsubsection{Finite-extent Coulomb potentials}
	In order to apply our results on the existence and regularity properties of the spectral shift functions (at all order), one needs to establish the $n/p$-summability of $V(H-i)^{-p}$.
	
	It is known that the long-range behaviour of the Coulombic potential on $L^2(\R^3)$ obstructs Cwikel or Birman--Solomyak estimates. 
	The following is a consequence of \cite[Proposition 4.7]{Simon}:
	\begin{lem}
		If $M_f(i+\Delta)^{-n}\in \S^1$ for some $n\in\N$, then $f\in \ell^1(L^2)(\R^d)$.
	\end{lem}
	As the next best thing (which suffices for many situations) one may consider spatially truncated (finite-extent) Coulomb potentials. By this we mean a compactly supported (or sufficiently decaying) function $f$ which satisfies $f(x)=\frac{1}{\|x\|}$ for $x$ in a compact region in $\R^d$.
	For such $f$, we have $f\in \ell^1(L^p)(\R^d)$ precisely when $p<d$.
	%
	In particular, a finite-extent Coulomb potential on $\R^3$ is an example of an element in $\ell^1(L^2)(\R^3)$. In this case indeed, spectral shift functions of all orders exist.
	
	\begin{thm}\label{lem:Coulomb Laplacian}
		For any $f\in \ell^1(L^2)(\R^3)$ and $\Delta$ the Laplacian on $\R^3$ we have
		$$M_f(i+\Delta)^{-p}\in\S^{4/p}\qquad(p=1,2,3,4).$$
	\end{thm}
	\begin{proof}
		Let $m\in\{2,3,4\}$.
		Define $g_m:\R^3\to\C$ by $$g_m(x):=(i-\|x\|^2)^{-m}.$$
		Take $p_S:=\frac{4}{m}\in[1,2]$. Then $f\in\ell^1(L^2)\subseteq\ell^{p_S}(L^2)$ and $g_m\in \ell^{p_S}(L^2)$. By \cite[Theorem 4.5]{Simon}, therefore
		$$M_f(i+\Delta)^{-m}=M_fg_m(-i\nabla)\in\S^{4/m}.$$
		
		We moreover note that $g_1\in L^2(\R^3)$ and $f\in\ell^1(L^2)(\R^3)\subseteq L^2(\R^3)$. So by \cite[Theorem 4.1]{Simon}, we obtain
		$$M_f(i+\Delta)^{-1}=M_fg_1(-i\nabla)\in\S^2\subseteq\S^4,$$
		finishing the proof.
	\end{proof}
	
	It is clear from the above proof that there are various ways to improve Theorem \ref{lem:Coulomb Laplacian}; this is however beyond the scope of this text. For further results we refer to \cite{Yafaev,Yafaev92,KatoBook,ReedSimonI} and references therein.
	
		%

	\appendix
	
	\section{Proofs of MOI identities}
	We give proofs of the identities claimed in \textsection\ref{sct:subsct:properties MOI}. Throughout, whenever $\mJ\subseteq\{1,\ldots,n\}$ is such that $V_j\in\mB(\H)$ for $j\notin\mJ$, we use the notation
	\begin{align}\label{eq:notation bT}
		\bT_{\phi,\mJ}(V_1,\ldots,V_n):=\bT_{\phi}(V_1,\ldots,V_n)
	\end{align}
	to emphasise that $V_j$ is possibly unbounded for $j\in\mJ$. In particular, $\bT_{\phi,\emptyset}=T_{\phi}$.
	
	\subsection{Change of variables}
	
	In the case of bounded perturbations $V_1,\ldots,V_n$, we recall the following result proved and used by Skripka and coathors in \cite{CS18,vNS22,vNS23,vNvS21a}.
	\begin{thm}[change of variables]\label{thm:cov}
		Let $H_0,\ldots,H_n$ be self-adjoint and let $V_1,\ldots,V_n\in\mB(\H)$ be bounded. For all $f\in C^n(\R)$ with $f^{(n)},f^{(n-1)},(fu)^{(n)}\in W_0(\R)$, we have for all $j\in\{1,\ldots,{n-1}\}$,
		\begin{align*}
			T^{H_0,\ldots,H_n}_{f^{[n]}}(V_1,\ldots,V_n)=&T^{H_0,\ldots,H_n}_{(fu)^{[n]}}(V_1,\ldots,V_{j-1},V_j(H_j-i)^{-1},V_{j+1},\ldots,V_n)\\
			&-T^{H_0,\ldots,H_{j-1},H_{j+1},\ldots,H_n}_{f^{[n-1]}}(V_1,\ldots,V_{j-1},V_j(H_j-i)^{-1}V_{j+1},V_{j+2},\ldots,V_n).
		\end{align*}
		For the boundary case $j=n$ we similarly have
		\begin{align*}
			T^{H_0,\ldots,H_n}_{f^{[n]}}(V_1,\ldots,V_n)=&T^{H_0,\ldots,H_n}_{(fu)^{[n]}}(V_1,\ldots,V_{n-1},V_n(H_n-i)^{-1})\\
			&-T^{H_0,\ldots,H_{n-1}}_{f^{[n-1]}}(V_1,\ldots,V_{n-1})V_n(H_n-i)^{-1},
		\end{align*}
		and for the boundary case $j=0$ we have
		\begin{align*}
			T^{H_0,\ldots,H_n}_{f^{[n]}}(V_1,\ldots,V_n)=&T^{H_0,\ldots,H_n}_{(fu)^{[n]}}((H_0-i)^{-1}V_1,V_2\ldots,V_n)\\
			&-(H_0-i)^{-1}V_1T^{H_1,\ldots,H_n}_{f^{[n-1]}}(V_2,\ldots,V_n).
		\end{align*}
	\end{thm}
	We extend this theorem to relatively bounded arguments as follows. The following theorem is a rephrasing of Theorem \ref{thm:cov rel bdd} in the explicit notation \eqref{eq:notation bT}.
	\begin{thm}[change of variables, relatively bounded]\label{thm:cov rel bdd2}
		Let $H_0,\ldots,H_n$ be self-adjoint. Let $\mJ\subseteq\{1,\ldots,n\}$ be a subset so that $V_k$ is bounded for each $k\in\{1,\ldots,n\}\setminus\mJ$ and $V_k$ is relatively $H_k$-bounded for each $k\in\mJ$. For each $f\in\mW_{|\mJ|}^n(\R)$ and each $j\in\{0,1,\ldots,n\}$ (the boundary cases $j=0$ and $j=n$ being understood as in Theorem \ref{thm:cov}) we have
		\begin{align*}
			\bT^{H_0,\ldots,H_n}_{f^{[n]},\mJ}(V_1,\ldots,V_n)=&\bT^{H_0,\ldots,H_n}_{(fu)^{[n]},\mJ\setminus\{j\}}(V_1,\ldots,V_{j-1},V_j(H_j-i)^{-1},V_{j+1},\ldots,V_n)\\
			&-\bT^{H_0,\ldots,H_{j-1},H_{j+1},\ldots,H_n}_{f^{[n-1]},p_j^{-1}(\mJ)}(V_1,\ldots,V_{j-1},V_j(H_j-i)^{-1}V_{j+1},V_{j+2},\ldots,V_n),
		\end{align*}
		where $p_j:\{1,\ldots,n-1\}\to\{1,\ldots,n\}$ is the order-preserving map whose range excludes $j$.
	\end{thm}
	\begin{proof}
		If $j\notin\mJ$, then the theorem follows directly from Definition \ref{def:MOI relatively bdd} and Theorem \ref{thm:cov}, so we assume that $j\in\mJ$. For notational simplicity, we moreover assume that $j=1$. We write $\alpha=(\alpha_1,\ldots,\alpha_n)\in\{0,1\}^n$ where $\alpha_l=1$ if and only if $l\in\mJ$. We also abbreviate $R_l:=(H_l-i)^{-1}$. By definition,
		\begin{align}\label{eq:MOI VR}
			\bT^{H_0,\ldots,H_n}_{f^{[n]},\mJ}(V_1,\ldots,V_n)&=T^{H_0,\ldots,H_n}_{f^{[n]}u^\alpha}(V_1R_1^{\alpha_1},\ldots,V_nR_n^{\alpha_n}).
		\end{align}
		We note that
		$$f^{[n]}(\lambda_0,\ldots,\lambda_n)=(fu)^{[n]}(\lambda_0,\ldots,\lambda_n)u^{-1}(\lambda_1)-f^{[n-1]}(\lambda_0,\lambda_2,\ldots,\lambda_n)u^{-1}(\lambda_1),$$
		so also, for $\lambda=(\lambda_0,\ldots,\lambda_n)$,
		$$(f^{[n]}u^\alpha)(\lambda)=((fu)^{[n]}u_1^{-1}u^{\alpha})(\lambda)-f^{[n-1]}(\lambda_0,\lambda_2,\ldots,\lambda_n)u^{-1}(\lambda_1)u^\alpha(\lambda),$$
		and $u^{-1}(\lambda_1)u^\alpha(\lambda)=u^{\alpha_2}(\lambda_2)\cdots u^{\alpha_n}(\lambda_n)$.
		Therefore, denoting $\alpha'=(\alpha_2,\ldots,\alpha_n)$, we have
		\begin{align}\label{eq:divdif rule with u k}
			(f^{[n]}u^\alpha)(\lambda)=((fu)^{[n]}u^{\alpha-\delta_1})(\lambda)-(f^{[n-1]}u^{\alpha'})(\lambda_0,\lambda_2,\ldots,\lambda_n),
		\end{align}
		where $\delta_1=(1,0,\ldots,0)$.
		Note that $|\mJ|=|\alpha|$. As $f\in\mW_{|\alpha|}^n(\R)$, we have $fu\in\mW_{|\alpha-\delta_1|}^n(\R)$ and $f\in\mW_{|\alpha'|}^{n-1}(\R)$ by Lemma \ref{lem:function spaces inclusions}\ref{item:2 function spaces inclusions}. Hence, by \eqref{eq:MOI VR}, \eqref{eq:divdif rule with u k}, and Theorem \ref{thm:MOI well-defined special case} we have
		\begin{align*}
			\bT^{H_0,\ldots,H_n}_{f^{[n]},\mJ}(V_1,\ldots,V_n)=&T^{H_0,\ldots,H_n}_{(fu)^{[n]}u^{\alpha-\delta_1}}(V_1R_1^{\alpha_1},V_2R_2^{\alpha_2},\ldots,V_nR_n^{\alpha_n})\\
			&-T^{H_0,H_2,\ldots,H_n}_{f^{[n-1]}u^{\alpha'}}(V_1R_1^{\alpha_1}V_2R_2^{\alpha_2},V_3R_3^{\alpha_3},\ldots,V_nR_n^{\alpha_n})\\
			=&\bT^{H_0,\ldots,H_n}_{(fu)^{[n]},\mJ\setminus\{1\}}(V_1(H_1-i)^{-1},V_2,\ldots,V_n)\\
			&-\bT^{H_0,H_2,\ldots,H_n}_{f^{[n-1]},p_1^{-1}(\mJ)}(V_1(H_1-i)^{-1}V_2,V_3,\ldots,V_n),
		\end{align*}
		which completes the proof.
	\end{proof}

	\subsection{Superscript difference}
	

	The following proposition is an extension of Theorem \ref{thm:difference} to higher order, and an extension of \cite[Theorem 4.3.14]{ST19} to relatively bounded differences $A-B$, and is a step towards its generalization involving general multilinear symbols $\mathbf T^{H_0,\ldots,H_n}_{f^{[n]},\mJ}$.
	
	\begin{prop}\label{prop:superscript cov}
		Let $n\in\N_{\geq1}$ and $j\in\{1,\ldots,n\}$. Let $H_0,\ldots,H_{n}$ be self-adjoint, let $V_1,\ldots,V_{n-1}\in\mB(\H)$, and let $V_n$ be a relatively $H_n$-bounded operator. Let $V$ be symmetric and relatively $H_j$-bounded with $H_j$-bound $<1$ and suppose that $V_j$ is relatively $H_j+V$-bounded (for instance, if $V$ is a scalar multiple of $V_j$). Write $A=H_j+V$, $B=H_j$. For all $f\in\mW^{n+1}_2(\R)$ (i.e. $f\in C^{n+1}(\R)$ such that $f^{(n-1)},(fu)^{(n)},(fu^2)^{(n+1)}\in W_0(\R)$) we have
		\begin{align*}
			&\bT^{H_0,\ldots,H_{j-1},A,H_{j+1},\ldots,H_n}_{f^{[n]},\{n\}}(V_1,\ldots,V_n)-\bT^{H_0,\ldots,H_{j-1},B,H_{j+1},\ldots,H_n}_{f^{[n]},\{n\}}(V_1,\ldots,V_n)\\
			&\qquad\qquad\qquad\qquad=\bT_{f^{[n+1]},\{j+1,p_{j+1}(n)\}}^{H_0,\ldots,H_{j-1},A,B,H_{j+1},\ldots,H_n}(V_1,\ldots,V_j,A-B,V_{j+1},\ldots,V_n),
		\end{align*}
		where $p_{j+1}:\{1,\ldots,n\}\to\{1,\ldots,n+1\}$ is the order-preserving map whose range excludes $j+1$.
	\end{prop}
	\begin{proof}
		We prove the statement for $j=1$ for notational simplicity; the proof for other values of $j$ is completely analogous. Similarly we assume $n\geq2$.
		We shall denote $\tilde V_n=V_n(H_n-i)^{-1}$. It follows from Definition \ref{def:MOI relatively bdd} together with the fact that $f^{[n]}u^{(0,\ldots,0,1)}\in \BS(\R^{n+1})$ that
		\begin{align}\label{eq:new-1}
			\nonumber\bT^{H_0,A,H_{2},\ldots,H_n}_{f^{[n]},\{n\}}(V_1,\ldots,V_n)(H_n-i)^{-1} =& T^{H_0,A,H_{2},\ldots,H_n}_{f^{[n]}u^{(0,\ldots, 0, 1)}}(V_1,\ldots,\tilde V_n)(H_n-i)^{-1}\\
			\nonumber=&  T^{H_0,A,H_{2},\ldots,H_n}_{f^{[n]}u^{(0,\ldots, 0, 1)}}(V_1,\ldots,\tilde V_n(H_n-i)^{-1})\\
			\nonumber=&\bT^{H_0,A,H_{2},\ldots,H_n}_{f^{[n]}, \{n\}}(V_1,\ldots,\tilde V_n)\\
			=&T^{H_0,A,H_{2},\ldots,H_n}_{f^{[n]}}(V_1,\ldots,\tilde V_n),
		\end{align}
		where the last equality follow from Lemma \ref{thm:well defined}. Since $(fu)^{(n)}\in W_0(\R)$, by Corollary \ref{cor:bd_moi_bdd}, we have
		\begin{align}\label{eq:new-2}
			\nonumber&T^{H_0,A,H_2,\ldots,H_n}_{(fu)^{[n]}}(V_1(A-i)^{-1},V_2,\ldots,V_n)\\
			\nonumber=&\int_\R\int_{\Delta_n}e^{is_0xH_0}V_1(A-i)^{-1}e^{is_1xA}V_2e^{is_2xH_2}\cdots V_ne^{is_nxH_n}\,ds\,d\mu(x)\\
			\nonumber=&\int_\R\int_{\Delta_n}e^{is_0xH_0}V_1e^{is_1xA}(A-i)^{-1}V_2e^{is_2xH_2}\cdots V_ne^{is_nxH_n}\,ds\,d\mu(x)\\
			=&T^{H_0,A,H_2,\ldots,H_n}_{(fu)^{[n]}}(V_1,(A-i)^{-1}V_2,\ldots,V_n),
		\end{align}
		where $(fu)^{(n)}(x)=\int e^{ixy} d\mu(y)$.
		
		Now by first using the identity \eqref{eq:new-1}, then using change-of-variables (Theorem \ref{thm:cov}), then using the identity \eqref{eq:new-2}, then using the second resolvent identity (Lemma \ref{lem:second resolvent identity}), we obtain
		\begin{align}\label{eq:rewriting with Vn tilde}
			\nonumber&\Big(\bT^{H_0,A,H_{2},\ldots,H_n}_{f^{[n]},\{n\}}(V_1,\ldots,V_n)-\bT^{H_0,B,H_{2},\ldots,H_n}_{f^{[n]},\{n\}}(V_1,\ldots,V_n)\Big)(H_n-i)^{-1}\\
			\nonumber=&\,T^{H_0,A,H_{2},\ldots,H_n}_{f^{[n]}}(V_1,\ldots,\tilde V_n)-T^{H_0,B,H_{2},\ldots,H_n}_{f^{[n]}}(V_1,\ldots,\tilde V_n)\\
			\nonumber=&\,T^{H_0,A,H_{2},\ldots,H_n}_{(fu)^{[n]}}(V_1,(A-i)^{-1}V_{2},V_{3},\ldots,\tilde V_n)-T^{H_0,H_{2},\ldots,H_n}_{f^{[n-1]}}(V_1(A-i)^{-1}V_{2},V_{3},\ldots,\tilde V_n)\\
			\nonumber&\quad-T^{H_0,B,H_{2},\ldots,H_n}_{(fu)^{[n]}}(V_1,(B-i)^{-1}V_{2},V_{3},\ldots,\tilde V_n)+T^{H_0,H_{2},\ldots,H_n}_{f^{[n-1]}}(V_1(B-i)^{-1}V_{2},V_{3},\ldots,\tilde V_n)\\
			\nonumber=&\,-T^{H_0,A,H_{2},\ldots,H_n}_{(fu)^{[n]}}(V_1,(A-i)^{-1}(A-B)(B-i)^{-1}V_{2},V_{3},\ldots,\tilde V_n)\\
			\nonumber&\quad+T^{H_0,A,H_{2},\ldots,H_n}_{(fu)^{[n]}}(V_1,(B-i)^{-1}V_{2},V_{3},\ldots,\tilde V_n)-T^{H_0,B,H_{2},\ldots,H_n}_{(fu)^{[n]}}(V_1,(B-i)^{-1}V_{2},V_{3},\ldots,\tilde V_n)\\
			&\quad+T^{H_0,H_{2},\ldots,H_n}_{f^{[n-1]}}(V_1(A-i)^{-1}(A-B)(B-i)^{-1}V_{2},V_{3},\ldots,\tilde V_n).
		\end{align}
		The second and third term on the right-hand side can be combined as follows. Both multiple operator integrals have purely bounded arguments, so when applied to a vector $\psi\in\H$ they can be written as $\H$-valued integrals over a finite measure obtained from the Fourier transform of $(fu)^{(n)}\in W_0(\R)$. Note that $(fu)^{(n+1)}\in W_0(\R)$. Arguing as in the proof of \cite[Eq.\,(5.6)]{ACDS}, namely by first invoking Corollary \ref{cor:bd_moi_bdd}, then applying the weighted Duhamel formula (Lemma \ref{lem:Duhamel}), and finally using Fubini's theorem (\cite[Lemma 3.8]{ACDS}), we obtain for $\psi\in\H$ that
		\begin{align*}
			&T^{H_0,A,H_{2},\ldots,H_n}_{(fu)^{[n]}}(V_1,(B-i)^{-1}V_{2},V_{3},\ldots,\tilde V_n)\psi-T^{H_0,B,H_{2},\ldots,H_n}_{(fu)^{[n]}}(V_1,(B-i)^{-1}V_{2},V_{3},\ldots,\tilde V_n)\psi\\
			=&\int_\R\int_{\Delta_n}e^{is_0xH_0}V_1\big(e^{is_1xA}-e^{is_1xB}\big)(B-i)^{-1}V_2e^{is_2xH_2}\cdots V_ne^{is_nxH_n}\,\psi\,ds\,d\mu(x)\\
			=& \,i\int_\R\int_{\Delta_n}e^{is_0xH_0}V_1\Big(\int_{0}^{s_1}e^{itxA}(A-B)e^{i(s_1-t)xB}\,dt\Big)(B-i)^{-1}V_2e^{is_2xH_2}\cdots V_ne^{is_nxH_n}\,\psi\,ds\,d\mu(x)\\
			=&\, i\int_\R\int_{\Delta_n}\int_{0}^{s_1}e^{is_0xH_0}V_1e^{itxA}(A-B)(B-i)^{-1}e^{i(s_1-t)xB}V_2e^{is_2xH_2}\cdots V_ne^{is_nxH_n}\,\psi\,dt\,\,ds\,d\mu(x)\\
			=&\,T^{H_0,A,B,H_{2},\ldots,H_n}_{(fu)^{[n+1]}}(V_1,(A-B)(B-i)^{-1},V_{2},V_{3},\ldots,\tilde V_n)\psi,
		\end{align*}
		where in the third equality, we use that $(B-i)^{-1}e^{i(s_1-t)xB}=e^{i(s_1-t)xB}(B-i)^{-1}$. Therefore,
		\begin{align*}
			&T^{H_0,A,H_{2},\ldots,H_n}_{(fu)^{[n]}}(V_1,(B-i)^{-1}V_{2},V_{3},\ldots,\tilde V_n)\psi-T^{H_0,B,H_{2},\ldots,H_n}_{(fu)^{[n]}}(V_1,(B-i)^{-1}V_{2},V_{3},\ldots,\tilde V_n)\\
			=&T^{H_0,A,B,H_{2},\ldots,H_n}_{(fu)^{[n+1]}}(V_1,(A-B)(B-i)^{-1},V_{2},V_{3},\ldots,\tilde V_n).
		\end{align*}
		Substituting this in \eqref{eq:rewriting with Vn tilde}, we find
		\begin{align*}
			X(H_n-i)^{-1}:=&\Big(\bT^{H_0,A,H_{2},\ldots,H_n}_{f^{[n]},\{n\}}(V_1,\ldots,V_n)-\bT^{H_0,B,H_{2},\ldots,H_n}_{f^{[n]},\{n\}}(V_1,\ldots,V_n)\Big)(H_n-i)^{-1}\\
			=&\,-T^{H_0,A,H_{2},\ldots,H_n}_{(fu)^{[n]}}(V_1,(A-i)^{-1}(A-B)(B-i)^{-1}V_{2},V_{3},\ldots,\tilde V_n)\\
			&\quad+T^{H_0,A,B,H_{2},\ldots,H_n}_{(fu)^{[n+1]}}(V_1,(A-B)(B-i)^{-1},V_{2},V_{3},\ldots,\tilde V_n)\\
			&\quad+T^{H_0,H_{2},\ldots,H_n}_{f^{[n-1]}}(V_1(A-i)^{-1}(A-B)(B-i)^{-1}V_{2},V_{3},\ldots,\tilde V_n)\\
			=&\,-T^{H_0,A,H_{2},\ldots,H_n}_{f^{[n]}}(V_1,(A-B)(B-i)^{-1}V_{2},V_{3},\ldots,\tilde V_n)\\
			&\quad+T^{H_0,A,B,H_{2},\ldots,H_n}_{(fu)^{[n+1]}}(V_1,(A-B)(B-i)^{-1},V_{2},V_{3},\ldots,\tilde V_n)\\
			=&\,\bT^{H_0,A,B, H_{2},\ldots,H_n}_{f^{[n+1]},\{2\}}(V_1,A-B,V_{2},V_{3},\ldots,\tilde V_n)\\
			=&\,\bT^{H_0,A,B, H_{2},\ldots,H_n}_{f^{[n+1]},\{2,n+1\}}(V_1,A-B,V_{2},V_{3},\ldots,\tilde V_n)\\
			=&\,\bT^{H_0,A,B, H_{2},\ldots,H_n}_{f^{[n+1]},\{2,n+1\}}(V_1,A-B,V_{2},V_{3},\ldots,V_n)(H_n-i)^{-1}=:Y(H_n-i)^{-1},
		\end{align*}
		where the second equality is obtained from the first by applying the change of variables (Theorem \ref{thm:cov}) and using \eqref{eq:new-2}; the passage from the second to the third equality follows directly from Definition \ref{def:MOI relatively bdd} together with the identity
		\[
		f^{[n+1]}(\lambda_0,\lambda_1,\ldots,\lambda_{n+1})u(\lambda_2)
		=(fu)^{[n+1]}(\lambda_0,\ldots,\lambda_{n+1})
		-(f)^{[n]}(\lambda_0,\lambda_1,\lambda_3,\ldots,\lambda_{n+1}),
		\]
		while the passage from the third to the fourth equality, as well as from the fourth to the last, proceeds analogously to the argument used for \eqref{eq:new-1}, namely by invoking Definition \ref{def:MOI relatively bdd} and Lemma \ref{thm:well defined}. As $\ran (H_n-i)^{-1}=\dom H_n$ lies dense in $\H$, and since the operators $X$ and  $Y$ in front of $(H_n-i)^{-1}$ on the left-hand side and right-hand side are bounded, we obtain the proposition.
	\end{proof}
	
	The above proposition serves as the induction basis in the inductive proof of the following result, which is an important component in the proof of our main theorems.
	
	\begin{thm}\label{thm:superscript difference bT}
		Let $n\in\N$ and $j\in\{0,\ldots,n\}$. Let $H_0,\ldots,H_n$ be self-adjoint, and let $V_1,\ldots,V_n$ be operators. Let $\mJ\subseteq\{1,\ldots,n\}$ be a subset with $n\in\mJ$ so that $V_k$ is bounded for each $k\in\{1,\ldots,n\}\setminus\mJ$ and $V_k$ is relatively $H_k$-bounded for each $k\in\mJ$. Let $W$ be symmetric and relatively $H_j$-bounded with $H_j$-bound $<1$. If $j>0$, suppose that $V_j$ is relatively $H_j+W$-bounded (e.g., $W=tV_j$ for $t\in(-\frac1a,\frac1a)$).
		For all $f\in\mW^{n+1}_{|\mJ|+1}(\R)$ we have
		\begin{align}\label{eq:superscript difference}
			\nonumber&\mathbf T^{H_0,\ldots,H_{j-1},H_j+W,H_{j+1},\ldots,H_n}_{f^{[n]},\mathcal J}(V_1,\ldots,V_n)-\mathbf T^{H_0,\ldots,H_n}_{f^{[n]},\mathcal J}(V_1,\ldots,V_n)\\
			&\hspace{90pt}=\mathbf T^{H_0,\ldots,H_{j-1},H_j+W,H_j,\ldots,H_n}_{f^{[n+1]},p_{j+1}(\mathcal J)\cup\{j+1\}}(V_1,\ldots,V_{j},W,V_{j+1},\ldots,V_n),
		\end{align}
		where $p_{j+1}:\{1,\ldots,n\}\to\{1,\ldots,n+1\}$ is defined by $k\mapsto k$ for $k\leq j$ and $k\mapsto k+1$ for $k> j$.
	\end{thm}
	\begin{proof}
		We prove this by induction to the number of elements in $\mJ$. The induction basis, $\mJ=\{n\}$, is Proposition \ref{prop:superscript cov}.
		
		For the induction step, we choose $k\in\mJ\setminus\{n\}$. Suppose first that $k\neq j$. In fact, let us assume that $k=1$ for notational simplicity, because the general proof is precisely the same.
		
		By Lemma \ref{lem:function spaces inclusions}, from $f\in\mW^{n+1}_{|\mJ|+1}(\R)$ it follows that $fu\in\mW^{n+1}_{|\mJ|}(\R)$ and $f\in\mW^{n}_{|\mJ|}(\R)$. Theorem \ref{thm:cov rel bdd2} therefore implies
		\begin{align}
			\nonumber&\bT^{H_0,\ldots,H_{j-1},H_j+W,H_{j+1},\ldots,H_n}_{f^{[n]},\mJ}(V_1,\ldots,V_n)-\bT^{H_0,\ldots,H_n}_{f^{[n]},\mJ}(V_1,\ldots,V_n)\\
			\nonumber=&\,\bT^{H_0,\ldots,H_{j-1},H_j+W,H_{j+1},\ldots,H_n}_{(fu)^{[n]},\mJ\setminus\{1\}}(V_1(H_1-i)^{-1},V_2,\ldots,V_n)\\
			\nonumber&\quad-\bT^{H_0,H_2,\ldots,H_{j-1},H_j+W,H_{j+1},\ldots,H_n}_{f^{[n-1]},p_1^{-1}(\mJ)}(V_1(H_1-i)^{-1}V_2,V_3,\ldots,V_n)\\
			\nonumber&\quad-\bT^{H_0,\ldots,H_n}_{(fu)^{[n]},\mJ\setminus\{1\}}(V_1(H_1-i)^{-1},V_2,\ldots,V_n)+\bT^{H_0,H_2,\ldots,H_n}_{f^{[n-1]},p_1^{-1}(\mJ)}(V_1(H_1-i)^{-1}V_2,V_3,\ldots,V_n)\\
			\nonumber=& \,\Big(\bT^{H_0,\ldots,H_{j-1},H_j+W,H_{j+1},\ldots,H_n}_{(fu)^{[n]},\mJ\setminus\{1\}}(V_1(H_1-i)^{-1},V_2,\ldots,V_n)\\
			\nonumber& \quad\qquad-\bT^{H_0,\ldots,H_n}_{(fu)^{[n]},\mJ\setminus\{1\}}(V_1(H_1-i)^{-1},V_2,\ldots,V_n)\Big)\\
			\nonumber&-\Big(\bT^{H_0,H_2,\ldots,H_{j-1},H_j+W,H_{j+1},\ldots,H_n}_{f^{[n-1]},p_1^{-1}(\mJ)}(V_1(H_1-i)^{-1}V_2,V_3,\ldots,V_n)\\
			&\qquad\qquad\qquad-\bT^{H_0,H_2,\ldots,H_n}_{f^{[n-1]},p_1^{-1}(\mJ)}(V_1(H_1-i)^{-1}V_2,V_3,\ldots,V_n)\Big)\label{eq:apply cov}
		\end{align}
		We apply the induction hypothesis (two times) to the right-hand side of \eqref{eq:apply cov}, and obtain
		\begin{align*}
			&\bT^{H_0,\ldots,H_{j-1},H_j+W,H_{j+1},\ldots,H_n}_{f^{[n]},\mJ}(V_1,\ldots,V_n)-\bT^{H_0,\ldots,H_n}_{f^{[n]},\mJ}(V_1,\ldots,V_n)\\
			=&\,\bT^{H_0,\ldots,H_{j-1},H_j+W,H_{j},\ldots,H_n}_{(fu)^{[n+1]},p_{j+1}(\mJ\setminus\{1\})\cup\{j+1\}}(V_1(H_1-i)^{-1},V_2,\ldots,V_{j},W,V_{j+1},\ldots,V_n)\\
			&\quad-\bT^{H_0,H_2,\ldots,H_{j-1},H_j+W,H_{j},\ldots,H_n}_{f^{[n-1]},p_{j}(p_1^{-1}(\mJ))\cup\{j\}}(V_1(H_1-i)^{-1}V_2,V_3,\ldots,V_j,W,V_{j+1},\ldots,V_n)\\
			=&\,\mathbf T^{H_0,\ldots,H_{j-1},H_j+W,H_j,\ldots,H_n}_{f^{[n+1]},p_{j+1}(\mathcal J)\cup\{j+1\}}(V_1,\ldots,V_{j},W,V_{j+1},\ldots,V_n),
		\end{align*}
		where we have applied Theorem \ref{thm:cov rel bdd2} again in the last step, this time using $f\in\mW^{n+1}_{|\mJ|+1}(\R)$, as $|p_{j+1}(\mJ)\cup\{j+1\}|=|\mJ|+1$.
		
		We now handle the case $k=j$. We may assume that $\mJ=\{j,n\}$ without loss of generality; this can be achieved if in the induction process we remove elements $k\neq j$ from $\mJ\setminus\{n\}$ until we end up with $\mJ=\{j,n\}$.  Again, for purely notational simplicity, we assume that $k=j=1$.
		
		
		In the following, the first identity is obtained by applying Theorem~\ref{thm:cov rel bdd2} (twice, using $f\in\mW^n_{|\mJ|}(\R)$). The passage from the first to the second identity, and from the second to the third, consists of straightforward algebraic rearrangements of the initial expression. The transition from the third to the fourth identity follows from Lemma~\ref{lem:second resolvent identity} (the second resolvent identity). The fourth-to-fifth identity is derived using Proposition~\ref{prop:superscript cov} (using $fu\in\mW^{n+1}_{|\mJ|}(\R)$,) together with Theorem~\ref{thm:cov rel bdd2} (using $f\in\mW^n_{|\mJ|}(\R)$). Finally, the last identity follows from the fifth by another application of Theorem~\ref{thm:cov rel bdd2}(using $f\in\mW^{n+1}_{|\mJ|+1}(\R)$).
		
		\begin{align*}
			&\bT^{H_0,H_1+W,H_{2},\ldots,H_n}_{f^{[n]},\{1,n\}}(V_1,\ldots,V_n)-\bT^{H_0,\ldots,H_n}_{f^{[n]},\{1,n\}}(V_1,\ldots,V_n)\\
			=&\,\bT^{H_0,H_1+W,H_{2},\ldots,H_n}_{(fu)^{[n]},\{n\}}(V_1(H_1+W-i)^{-1},V_{2},\ldots,V_n)\\
			&\qquad\qquad-\bT^{H_0,H_{2},\ldots,H_n}_{f^{[n-1]},\{n-1\}}(V_1(H_1+W-i)^{-1}V_{2},V_{3},\ldots,V_n)\\
			&\quad-\bT^{H_0,\ldots,H_n}_{(fu)^{[n]},\{n\}}(V_1(H_1-i)^{-1},V_{2},\ldots,V_n)+\bT^{H_0,H_{2},\ldots,H_n}_{f^{[n-1]},\{n-1\}}(V_1(H_1-i)^{-1}V_{2},V_{3},\ldots,V_n)\\
			=&\,\bT^{H_0,H_1+W,H_{2},\ldots,H_n}_{(fu)^{[n]},\{n\}}(V_1(H_1+W-i)^{-1},V_{2},\ldots,V_n)-\bT^{H_0,\ldots,H_n}_{(fu)^{[n]},\{n\}}(V_1(H_1-i)^{-1},V_{2},\ldots,V_n)\\
			&\quad-\bT^{H_0,H_{2},\ldots,H_n}_{f^{[n-1]},\{n-1\}}(V_1(H_1+W-i)^{-1}V_{2},V_{3},\ldots,V_n)+\bT^{H_0,H_{2},\ldots,H_n}_{f^{[n-1]},\{n-1\}}(V_1(H_1-i)^{-1}V_{2},V_{3},\ldots,V_n)\\
			=&\,\bT^{H_0,H_1+W,H_{2},\ldots,H_n}_{(fu)^{[n]},\{n\}}(V_1(H_1+W-i)^{-1},V_{2},\ldots,V_n)-\bT^{H_0,\ldots,H_n}_{(fu)^{[n]},\{n\}}(V_1(H_1+W-i)^{-1},V_{2},\ldots,V_n) \\
			&+\bT^{H_0,\ldots,H_n}_{(fu)^{[n]},\{n\}}(V_1(H_1+W-i)^{-1}-V_1(H_1-i)^{-1},V_{2},\ldots,V_n)\\
			&- \bT^{H_0,H_{2},\ldots,H_n}_{f^{[n-1]},\{n-1\}}(V_1(H_1+W-i)^{-1}V_{2}-V_1(H_1-i)^{-1}V_{2},V_{3},\ldots,V_n)
		\end{align*}	
		\begin{align*}
			=&\,\bT^{H_0,H_1+W,H_{2},\ldots,H_n}_{(fu)^{[n]},\{n\}}(V_1(H_1+W-i)^{-1},V_{2},\ldots,V_n)-\bT^{H_0,\ldots,H_n}_{(fu)^{[n]},\{n\}}(V_1(H_1+W-i)^{-1},V_{2},\ldots,V_n)\\
			&\quad-\bT^{H_0,\ldots,H_n}_{(fu)^{[n]},\{n\}}(V_1(H_1+W-i)^{-1}W(H_1-i)^{-1},V_{2},\ldots,V_n)\\
			&\quad+\bT^{H_0,H_{2},\ldots,H_n}_{f^{[n-1]},\{n-1\}}(V_1(H_1+W-i)^{-1}W(H_1-i)^{-1}V_{2},V_{3},\ldots,V_n)\\
			=&\,\mathbf T^{H_0,H_1+W,H_1,\ldots,H_n}_{(fu)^{[n+1]},\{2,n+1\}}(V_1(H_1+W-i)^{-1},W,V_{2},\ldots,V_n)-\mathbf T^{H_0,\ldots,H_n}_{f^{[n]},\{1,n\}}(V_1(H_1+W-i)^{-1}W,V_{2},\ldots,V_n)\\
			=&\,\mathbf T^{H_0,H_1+W,H_1,\ldots,H_n}_{f^{[n+1]},\{1,2,n+1\}}(V_1,W,V_{2},\ldots,V_n),
		\end{align*}
		concluding the proof.
	\end{proof}
	
	\section*{Conflict of interest statement}
	On behalf of all authors, the corresponding author states that there is no conflict of interest.
	\section*{Data availability statement}
	No datasets were generated or analyzed during the current study.

\end{document}